\newtheorem{Thm}{Theorem}
\newtheorem{Prop}[Thm]{Proposition}
\newtheorem{Lem}[Thm]{Lemma}
\newtheorem{Cor}[Thm]{Corollary}
\DeclareMathOperator{\Hom}{Hom}
\DeclareMathOperator{\End}{End}
\DeclareMathOperator{\Ext}{Ext}
\DeclareMathOperator{\Ker}{Ker}
\DeclareMathOperator{\modcat}{mod}
\DeclareMathOperator{\add}{add}
\DeclareMathOperator{\rank}{rank}
\DeclareMathOperator{\rk}{rk}
\DeclareMathOperator{\dimv}{\underline\dim}
\title{The cluster complex of an hereditary artin algebra}
\author{Andrew Hubery}
\date{}
\subjclass[2000]{16G20, 55U10}
\begin{document}

\begin{abstract}
We show that the cluster complex of an arbitrary hereditary artin algebra has the structure of an abstract simplicial polytope. In particular, the cluster-tilting objects form one equivalence class under mutation.
\end{abstract}

\maketitle

Let $\Lambda$ be a (basic) hereditary artin algebra of rank $n$ over a commutative artin ring $R$. Since the centre of $\Lambda$ is semisimple (so a product of fields), we may also assume that $R$ is semisimple. We denote by $\modcat\Lambda$ the category of finite length right $\Lambda$-modules, and recall that the Krull-Remak-Schmidt Theorem holds in $\modcat\Lambda$. Given $T\in\modcat\Lambda$ we denote by $\add(T)$ the additive subcategory generated by the indecomposable summands of $T$.

A module $T\in\modcat\Lambda$ is called rigid provided that $\Ext^1(T,T)=0$, and is called exceptional if it is both rigid and indecomposable. The module $T$ is called basic if it is a direct sum of pairwise non-isomorphic indecomposable modules. Finally, we call $T$ a tilting module provided that it is rigid and there exists an $\add(T)$-coresolution of $\Lambda$ of length one; that is, there exists a short-exact sequence
\[ 0\to\Lambda\to T_1\to T_2\to0, \quad T_i\in\add(T). \]
We define $\rank T$ for a rigid module $T$ to be the number of pairwise non-isomorphic indecomposable summands of $T$.

Let $T$ be a rigid module. Then $T$ can be completed to a tilting module $T\oplus X$, and $T$ is tilting if and only if $\rank T=n$, the rank of $\Lambda$ \cite{Bongartz,HR}. If $\rank T=n-1$, so that $T$ is an `almost-complete' rigid module, then there is a unique exceptional module $X$ such that $T\oplus X$ is tilting if $T$ is insincere, and there are precisely two such complements if $T$ is sincere \cite{HU1,RS1}.

Using these results, one can construct a pure simplicial complex of rank $n-1$, called the tilting complex, by taking as vertices the isomorphism classes of exceptional modules and as $r$-simplices the basic rigid modules of rank $r+1$ \cite{RS2}. In particular, the facets correspond to the basic tilting modules.

If $Q$ is Dynkin, then the tilting complex is a ball of dimension $n-1$ \cite{RS2}, and in general, the boundary is given by those $(n-2)$-simplices corresponding to insincere rigid modules \cite{HU1}. Moreover, Unger showed in \cite{Unger} that each flag-connected component has a boundary. (Using the language of polytopes \cite{MS} we say that two facets are adjacent if they intersect in a ridge, and this induces a decomposition of the tilting complex into so-called flag-connected components.)

It follows that the tilting complex has a natural completion, with simplices given by pairs $(T,\sigma)$ such that $T$ is basic rigid and $\sigma$ is a set of vertices of $Q$ not contained in the support of $T$. The pair $(T,\sigma)$ is an $r$-simplex, where $r=\rank T+|\sigma|-1$. It is easily checked that this is again a pure simplicial complex of rank $n-1$, and the facets are in bijection with the basic `support-tilting' modules --- those basic rigid modules which are tilting modules when restricted to their support.

Unfortunately this completion of the tilting complex was not studied further. Also, Unger's result does not easily generalise to arbitrary hereditary artin algebras, since it uses in an essential way that all exceptional modules have endomorphism ring the base field.

In the seminal paper of Buan, Marsh, Reineke, Reiten and Todorov \cite{BMRRT}, these ideas resurfaced, but in a different guise. They introduced and studied the cluster category associated to the path algebra of a quiver, defined to be the orbit category for the endofunctor $\tau^-[1]$ on the bounded derived category. In this setting, they showed that the cluster-tilting (or maximal rigid) objects provide a realisation for the combinatorics of the recently discovered cluster algebras of Fomin and Zelevinsky \cite{FZ}. More precisely, the indecomposable direct summands of the cluster-tilting objects are naturally in bijection with the cluster variables in such a way that the basic cluster-tilting objects correspond to the clusters and the mutation of clusters corresponds to the two possible completions of an almost-complete rigid object.

As before, one can form the cluster complex, the simplicial complex whose $r$-simplices correspond to basic cluster-tilting objects of rank $r+1$. This is immediately seen to be isomorphic to the above completion of the tilting complex for the quiver. In particular, the pair $(T,\sigma)$ corresponds to the cluster-tilting object $T\oplus P_\sigma[1]$, where $P_\sigma$ is the projective cover of the semisimple module having support $\sigma$. An essential theorem in \cite{BMRRT} is that this cluster complex is flag connected (so that the cluster-tilting objects form a single equivalence class under mutation). At the heart of this lies Unger's aforementioned result, although the proof in \cite{BMRRT} is rather indirect and uses a second result by Happel and Unger \cite{HU2}. In fact, Unger's result could have been applied directly, using that the cluster complex is naturally the completion of the tilting complex.

Of course, one can also define cluster algebras and cluster categories for (skew-) symmetrisable generalised Cartan matrices, which in turn correspond to species (or more general hereditary artin algebras). One would therefore like to extend this correspondence to all hereditary artin algebras, showing that there is always a bijection between the isomorphism classes of support-tilting modules (equivalently, cluster-tilting objects) and the clusters in the corresponding cluster algebra.

In a different direction, Chapoton, Fomin and Zelevinsky \cite{CFZ} gave explicit realisations of the cluster complexes arising from cluster algebras of finite type, thus proving that they are all simplicial polytopes. It is therefore an interesting question as to whether all cluster complexes coming from hereditary artin algebras share this extra structure.

In this article we prove in Theorem \ref{Main Thm} that, for an arbitrary hereditary artin algebra, the cluster complex (the completion of the tilting complex) has the structure of an abstract simplicial polytope. In particular, we prove that this complex is always strongly flag connected, and hence that the set of support-tilting modules forms a single equivalence class under mutation. Again, under the natural bijection between support-tilting modules and cluster-tilting objects, this proves the equivalent statement for cluster categories.

Our proof is very different to \cite{BMRRT}, and does not rely on Unger's theorem \cite{Unger}. In fact our approach has a lot in common with \cite{CB2,Ringel2}, where it is shown that the braid group acts transitively on the set of exceptional sequences. In particular, we show that the co-face of a pair $(T,\sigma)$ (that is, the poset of pairs containing $(T,\sigma)$) is isomorphic to the simplicial polytope for the perpendicular category. More precisely, let $\Lambda_\sigma$ be the factor algebra of $\Lambda$ given by the idempotent $e_\sigma=\sum_{i\in\sigma}e_i$. We define the perpendicular category $(T,\sigma)^\perp$ to be the relative perpendicular category $T^\perp$ inside $\modcat\Lambda_\sigma$. This perpendicular category is known to be equivalent to $\modcat\Gamma$ for some basic hereditary artin algebra $\Gamma$ with $\rank\Gamma=\rank\Lambda-\rank T-|\sigma|$.

This result can be seen as a generalisation of two important results. Firstly, if $T$ is a basic rigid module with $n-1$ indecomposable summands, then $T^\perp\cong\modcat\Gamma$, where $\Gamma$ is a hereditary algebra of rank one, so a division ring. Thus there is a unique basic tilting module and a unique vertex for $\Gamma$, whence its cluster complex is a 1-simplex, or line segment. This can be interpreted as saying that each almost-complete rigid module has exactly two completions to a support-tilting module. Similarly, if $T$ has $n-2$ summands, then this polytope is isomorphic to the link of $T$, and is given by the cluster complex of a rank two hereditary algebra. If we are working over an algebraically closed field, then we have a quiver of type $A_1\cup A_1$ or $A_2$, or else a generalised Kronecker quiver, giving respectively a square, a pentagon or a doubly infinite line. These are precisely the complexes found by Unger \cite{Unger} (taking into account that our complex is connected since we also include support-tilting modules). More generally, for an arbitrary hereditary artin algebra, we also obtain hexagons and octagons, in types $B_2$ and $G_2$ respectively.

Now, given a support-tilting module $T$ and a direct summand $T'$, we see that $T$ is mutation equivalent to $T'\oplus B$, where $B$ is the relative Bongartz complement of $T'$, constructed by first restricting to the support of $T'$. (Moreover, we need only mutate at those indecomposable summands not involved in $T'$.) The point is therefore to construct some measure on support-tilting modules which, by this construction, necessarily decreases. We are then done by induction, since every support-tilting module will be mutation equivalent to the zero module (i.e. the pair $(0,\sigma)$ where $\sigma$ is the set of all vertices).

The measure we use is easy to construct when working over an algebraically closed field --- we take
\[ \lambda\big(\bigoplus_{i=1}^rT(i),\sigma\big) := (0,\ldots,0,\dim T(1),\ldots,\dim T(r)), \]
where there are $|\sigma|=n-r$ zeros and $\dim T(i)\leq\dim T(i+1)$, and order these $n$-tuples lexicographically. In general, taking the length over the centre of $\Lambda$ does not work. For example, let $K/k$ be a field extension of degree 3 and consider the $k$-algebra
\[ \Lambda=\begin{pmatrix}K&K\\0&k\end{pmatrix} \]
of type $G_2$. Then the Auslander-Reiten quiver of $\modcat\Lambda$ is given by
\[ \xymatrix{&(1,3)\ar[dr]&&(2,3)\ar[dr]&&(1,0)\\(0,1)\ar[ur]&&(1,2)\ar[ur]&&(1,1)\ar[ur]} \]
If we consider just the $k$-dimensions of the modules, then we obtain
\[ \xymatrix{&6\ar[dr]&&9\ar[dr]&&3\\1\ar[ur]&&5\ar[ur]&&4\ar[ur]} \]
We see that for the indecomposable of dimension vector $(1,2)$, both its Bongartz complement $(1,3)$ and its dual Bongartz complement $(2,3)$ have larger dimension.

To fix this problem, we show in Theorem \ref{total-order} and Corollary \ref{rank2} that taking $\dim_kX/\sqrt{\dim_k\End(X)}$ works. For the algebra $\Lambda$ above, the square of this gives
\[ \xymatrix{&12\ar[dr]&&27\ar[dr]&&3\\1\ar[ur]&&25\ar[ur]&&16\ar[ur]} \]

For completeness, we have included in the first few sections a survey of the results we shall need, as well as complete proofs. Our reasons for doing so are that many of the results in the literature are either only stated for for path algebras of quivers over algebraically closed fields, or have only been proved in special cases.

In the first section we unify the ideas of left $\add(T)$-approximations \cite{AS} and universal $\add(T)$-extensions \cite{Bongartz} to all extension groups.

Proposition \ref{HR-generalisation} is a generalisation of a key lemma in \cite{HR}, and can also be found in \cite{RS1}. In this more general form it implies that, for $M$ indecomposable and $\Ext^1(T,M)=0$, the minimal left $\add(T)$-approximation of $M$ is either a monomorphism or an epimorphism, Corollary \ref{min-left-mono-or-epi}.

Proposition \ref{dim-vector} can be found in \cite{Kerner}, though we give a different proof based only on Proposition \ref{HR-generalisation}. One can use this to give a nice proof of Happel and Ringel's result \cite{HR} that the dimension vectors of the indecomposable summands of a basic rigid module are linearly independent in the Grothendieck group, Corollary \ref{lin-indept}.

Our discussion on complements of tilting modules is more in the spirit of \cite{RS1} than \cite{HU1}, making explicit use of the Bongartz complement and its dual, together with Proposition \ref{HR-generalisation}. However, the importance of the relative Bongartz complement (when one restricts to the support of a rigid module) and the fact that it is a summand of the full Bongartz complement, Proposition \ref{B_1,B_2 Prop}, seems to be new.

Theorem \ref{perp-thm} is a natural extension of a theorem in \cite{Ringel1}. Ringel proved this result for exceptional modules, and we generalise this to all rigid modules. This is also the main ingredient in proving Theorem \ref{co-face}. A version of this result for Calabi-Yau categories is given by Theorem 4.9 in \cite{IY}.

Theorem \ref{proj-gen-inj-cogen} can be found in \cite{CB1,Kerner}. Surprisingly, Theorem \ref{derived-equiv} seems to be new, although the ideas used are all standard.

As mentioned above, our main theorem, Theorem \ref{Main Thm}, was known only for path algebras over algebraically closed fields \cite{BMRRT}. One of the key observations, Theorem \ref{total-order}, is new and would appear to have independent interest. Finally Corollary \ref{Cor-Endos} gives an alternative proof in the special case of tilting modules of the well-known result that the product of the endomorphism rings of the indecomposables in a complete exceptional sequence is isomorphic to $\Lambda/\mathrm{rad}\Lambda$ \cite{CB2,Ringel2}.

The author would like to thank W.~Crawley-Boevey and R.~Marsh for interesting discussions.

\section{Universal extensions}

Let $\Lambda$ be a ring and let $T\in\modcat\Lambda$.

An element $\varepsilon\in\Ext^i(M,T')$ with $T'\in\add(T)$ is called a universal $\add(T)$-extension of $M$ provided that $\varepsilon^\ast\colon\Hom(T',T)\to\Ext^i(M,T)$ is surjective. If $\varepsilon_1,\ldots,\varepsilon_n$ are generators for $\Ext^i(M,T)$ as a left $\End(T)$-module, then $(\varepsilon_j)\in\Ext^i(M,T^n)$ is universal. We call $\varepsilon$ minimal if, whenever $\theta\in\End(T')$ satisfies $\theta\varepsilon=\varepsilon$, then $\theta$ is an automorphism. Recall that $\theta\varepsilon$ is the push-out of $\varepsilon$ along $\theta$.

The case of $i=0$ was introduced by Auslander and Smal\o\ under the name of (minimal) left $\add(T)$-approximations of $M$ \cite{AS,ARS}. The case $i=1$ appears in the work of Bongartz on tilting modules \cite{Bongartz}.

\begin{Prop}\label{universal-extension}
Minimal universal $\add(T)$-extensions exist and are unique up to isomorphism. In fact, if $\varepsilon\in\Ext^i(M,T')$ is a universal $\add(T)$-extension, then there exists a decomposition $T'=I\oplus J$ such that $\varepsilon=\bar\varepsilon\oplus 0\in\Ext^i(M,I)\oplus\Ext^i(0,J)$ and with $\bar\varepsilon$ minimal.
\end{Prop}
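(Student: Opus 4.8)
The plan is to prove the structural statement first — that any universal $\add(T)$-extension $\varepsilon\in\Ext^i(M,T')$ splits off a minimal piece — and then derive existence and uniqueness as consequences. Write $\varepsilon=(\varepsilon_1,\ldots,\varepsilon_m)$ according to a decomposition $T'=\bigoplus_j T_j$ into indecomposables, and consider the set $S$ of endomorphisms $\theta\in\End(T')$ with $\theta\varepsilon=\varepsilon$. Since push-out is functorial, $S$ is closed under composition, and since $\varepsilon$ is universal the map $\varepsilon^\ast$ remains surjective after applying any $\theta\in S$; the idea is to show $S$ contains an idempotent $e$ whose image $I$ carries a minimal universal extension and whose kernel $J$ carries the zero extension. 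Concretely, I would argue that among all direct summands $I$ of $T'$ such that $\varepsilon$ factors (via a split injection $\Ext^i(M,I)\hookrightarrow\Ext^i(M,T')$ induced by a splitting $T'\cong I\oplus J$) through an element $\bar\varepsilon\in\Ext^i(M,I)$ which is still universal, one may choose $I$ of minimal length (using that $\modcat\Lambda$ has finite length and Krull–Remak–Schmidt). Minimality of length then forces $\bar\varepsilon$ to be a minimal universal extension: if $\theta\in\End(I)$ fixed $\bar\varepsilon$ but were not an automorphism, then by Fitting's lemma some power $\theta^k$ is an idempotent with proper image $I'\subsetneq I$, and $\bar\varepsilon$ would factor through $\Ext^i(M,I')$ contradicting minimality. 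That the complementary summand $J$ carries the zero class is automatic from the factorization.

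The key technical step, and the one I expect to be the main obstacle, is showing that such a universal direct summand $I$ actually exists, i.e. that one can strip off redundant summands while preserving universality. The mechanism is this: suppose $T_j$ is a summand and let $\pi_j,\iota_j$ be the associated projection and inclusion. If the composite $\Hom(T_j,T)\to\Ext^i(M,T)$ (restriction of $\varepsilon^\ast$ along $\iota_j$) has image already contained in the sum of the images coming from the other summands, then one should be able to modify the splitting — replacing $\iota_j$ by $\iota_j$ plus a correction through the other summands — so that $T_j$ genuinely splits off with zero extension. Making this precise requires a small linear-algebra argument over the (noncommutative, but semiperfect) ring $\End(T)$, using that $\Ext^i(M,T)$ is finitely generated as a left $\End(T)$-module; the finite generation is what lets the descent terminate. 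I would phrase this as: pick a decomposition $\varepsilon=\bar\varepsilon\oplus 0$ with $\bar\varepsilon\in\Ext^i(M,I)$, $I$ of smallest possible length among all such decompositions, and then show directly that $\bar\varepsilon$ must be minimal and universal by the Fitting argument above.

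Finally, existence of minimal universal extensions is immediate: choose generators $\varepsilon_1,\ldots,\varepsilon_n$ of $\Ext^i(M,T)$ as a left $\End(T)$-module, form $(\varepsilon_j)\in\Ext^i(M,T^n)$ which is universal by construction, and apply the splitting statement to extract a minimal one. For uniqueness, given two minimal universal extensions $\varepsilon\in\Ext^i(M,I)$ and $\varepsilon'\in\Ext^i(M,I')$, universality of each produces maps $f\colon I\to I'$ and $g\colon I'\to I$ with $f\varepsilon=\varepsilon'$ and $g\varepsilon'=\varepsilon$ — here I would be a little careful, since universality as stated gives surjectivity of $\varepsilon^\ast$ onto $\Ext^i(M,T)$, and one recovers the needed maps by writing $\varepsilon'$, viewed in $\Ext^i(M,T)$ after composing with a fixed $\add(T)$-embedding $I'\hookrightarrow T^N$, as a push-out of $\varepsilon$. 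Then $gf\varepsilon=\varepsilon$ and $fg\varepsilon'=\varepsilon'$, so by minimality $gf$ and $fg$ are automorphisms, whence $f$ and $g$ are isomorphisms. This establishes uniqueness up to isomorphism and completes the argument.
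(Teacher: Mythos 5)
Your proof is essentially correct, and the overall shape (reduce to a minimal-length piece, then uniqueness via composing the two universality maps) matches the paper, but the mechanism you use for the reduction is genuinely different. The paper minimizes the length of $\theta(T')$ over \emph{all} endomorphisms $\theta$ of $T'$ fixing $\varepsilon$ (not only idempotents), sets $\bar\varepsilon := \pi\varepsilon$ where $\theta = \iota\pi$ is the image factorization, and then derives both minimality of $\bar\varepsilon$ and the fact that $\pi$ is a split epimorphism (hence $I\in\add(T)$) as consequences of the same length-minimality, by the trick of applying the minimality statement to $\bar\phi = \pi\iota$. You instead minimize the length of $I$ over genuine direct-sum decompositions $T' = I\oplus J$ with $\varepsilon = \bar\varepsilon\oplus 0$ and $\bar\varepsilon$ universal, so split-ness is built in, and you use Fitting's lemma to produce a strictly smaller summand whenever minimality fails. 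Both work; the paper's route is slightly slicker because it needs only one minimization and avoids Fitting, while yours keeps the direct-summand structure visible at every stage.

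Two small points. First, the paragraph where you worry that ``showing such a universal direct summand $I$ actually exists'' is the main obstacle, and propose stripping off redundant summands one by one, is a red herring: the trivial decomposition $I = T'$, $J = 0$, $\bar\varepsilon = \varepsilon$ is always available, so the set of admissible decompositions is nonempty and a minimal-length $I$ exists immediately by finite length; no stripping argument is needed. Second, the phrase ``some power $\theta^k$ is an idempotent'' is not quite right --- Fitting gives $I = \mathrm{im}\,\theta^k\oplus\ker\theta^k$ with $\theta$ an automorphism on the first piece and nilpotent on the second, but $\theta^k$ itself need not be idempotent. What you actually want is that $\theta\bar\varepsilon = \bar\varepsilon$ forces the $\ker\theta^k$-component of $\bar\varepsilon$ to vanish (since $\theta$ is nilpotent there), so $\bar\varepsilon$ factors through the direct summand $\mathrm{im}\,\theta^k$ of strictly smaller length; one also needs the quick observation that this factorization preserves universality (which follows because $\theta^k$ factors through $\mathrm{im}\,\theta^k$). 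These are easy repairs and the argument is sound once made.
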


\begin{proof}
Let $\varepsilon\in\Ext^i(M,T')$ be a universal $\add(T)$-extension of $M$ and suppose $\theta\in\End(T')$ satisfies $\theta\varepsilon=\varepsilon$. Take such a $\theta$ with $I:=\theta(T')$ having minimal length and write $\theta\colon T'\overset{\pi}\twoheadrightarrow I\overset{\iota}\hookrightarrow T'$. Set $\bar\varepsilon:=\pi\varepsilon$, so $\bar\varepsilon^\ast\colon\Hom(I,T)\to\Ext^i(M,T)$ is onto. Let $\bar\phi\in\End(I)$ satisfy $\bar\phi\bar\varepsilon=\bar\varepsilon$. Then $\phi:=\iota\bar\phi\pi\in\End(T')$ satisfies $\phi\varepsilon=\varepsilon$ and $\phi$ has image contained in $I$. By the choice of $\theta$ we see that $\bar\phi$ is an automorphism of $I$. Applying this to $\bar\phi=\pi\iota$ we deduce that $\pi$ is a split epimorphism, so $I\in\add(T)$ and $\bar\varepsilon=\pi\varepsilon\in\Ext^i(M,I)$ is a minimal universal $\add(T)$-extension.

Let $\varepsilon\in\Ext^i(M,T')$ and $\eta\in\Ext^i(M,T'')$ be universal $\add(T)$-extensions of $M$. Then we can write $\eta=\phi\varepsilon$ and $\varepsilon=\phi\eta$ for some $\phi$ and $\psi$. Thus $\varepsilon=\psi\phi\varepsilon$, so if $\varepsilon$ is minimal, then $\psi\phi$ is an automorphism. It follows that minimal universal $\add(T)$-extensions are unique up to isomorphism.
\end{proof}

\begin{Cor}
\begin{enumerate}
\item A left $\add(T)$-approximation $f\colon M\to T'$ is minimal if and only if the cokernel $g\colon T'\twoheadrightarrow C$ is a radical morphism.
\item A universal $\add(T)$-extension $\varepsilon\colon 0\to T'\xrightarrow{f} E\xrightarrow{g}M\to 0$ is minimal if and only if $f$ is a radical morphism.
\end{enumerate}
\end{Cor}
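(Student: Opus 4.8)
The plan is to deduce both statements directly from Proposition \ref{universal-extension}, which already provides the structural dichotomy we need. The key idea is that "minimal" in the sense of the proposition (no proper endomorphism $\theta$ fixing $\varepsilon$) should be translated into the concrete condition that the relevant map be a radical morphism; the translation hinges on the standard fact that a morphism $h\colon X\to X$ with $X$ basic (or more precisely, after splitting off the trivial part) is an automorphism if and only if it is not a radical endomorphism, equivalently that $X$ has no direct summand on which $h$ restricts to an isomorphism onto a proper summand.

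For part (1), I would argue as follows. Suppose $f\colon M\to T'$ is a left $\add(T)$-approximation with cokernel $g\colon T'\twoheadrightarrow C$. If $g$ is not radical, then $C$ has an indecomposable summand $C_0$ such that the composite $T'\to C\to C_0$ is a split epimorphism, i.e. $T'$ has a summand $T_0\cong C_0$ mapped isomorphically; projecting $f$ away from $T_0$ still gives a left $\add(T)$-approximation (since $\Hom(T_0,T)$ contributes nothing new to the factorization), so $f$ is not minimal. Conversely, if $f$ is not minimal, Proposition \ref{universal-extension} (the case $i=0$) gives a nontrivial decomposition $T'=I\oplus J$ with $f=\bar f\oplus 0$, $\bar f$ minimal, $J\neq 0$; then $J$ is a direct summand of $C$ split off by $g$, so $g$ is not radical. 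Part (2) is the exact analogue for $i=1$: a universal $\add(T)$-extension $\varepsilon\colon 0\to T'\xrightarrow{f}E\xrightarrow{g}M\to 0$ corresponds to an element of $\Ext^1(M,T')$, and by Proposition \ref{universal-extension} it is minimal iff it has no summand of the form $0\in\Ext^1(0,J)$, i.e. iff $T'$ has no summand $J$ such that the sequence splits off $0\to J\xrightarrow{=}J\to 0$; but such a splitting is precisely the assertion that $f\colon T'\to E$ has a summand $J$ mapped by a split monomorphism realized as identity — equivalently that $f$ fails to be radical (a non-radical monomorphism between modules, composed suitably, exhibits a summand isomorphism).

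Concretely I would phrase the argument using the characterization: $\varepsilon\in\Ext^i(M,T')$ is a direct sum $\bar\varepsilon\oplus 0$ (with $0\in\Ext^i(0,J)$, $J\neq 0$) if and only if the structural map of $\varepsilon$ — the cokernel map $T'\to C$ for $i=0$, the inclusion map $T'\to E$ for $i=1$ — admits $J$ as a summand on which it restricts to a split mono/epi onto a direct summand, which is exactly the negation of being a radical morphism. Combining this equivalence with the uniqueness-and-splitting statement of Proposition \ref{universal-extension} yields both claims immediately.

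The only mildly delicate point — and the one I would write out with care — is the equivalence "the structural morphism is radical $\iff$ no nonzero summand $J$ of $T'$ splits off the trivial part of $\varepsilon$." For $i=0$ this is the familiar statement that $g\colon T'\to C$ being radical is equivalent to no indecomposable summand of $T'$ mapping isomorphically onto a summand of $C$; for $i=1$ one notes that a summand $J$ of $T'$ on which $f$ is a split monomorphism forces the corresponding subextension to be the split sequence $0\to J\to J\to 0$, and conversely. This is routine Krull--Remak--Schmidt bookkeeping but deserves a sentence or two, since it is where the abstract minimality of Proposition \ref{universal-extension} becomes the usable radical-morphism criterion. Everything else is formal.
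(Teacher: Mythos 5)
The paper states this corollary without proof, so there is no authorial argument to compare against; your proposal supplies a proof along the natural lines (translating the minimality condition of Proposition~\ref{universal-extension} into the radical-morphism condition), and it is essentially correct. I will flag one point in part~(1) that, as phrased, does not quite go through and needs the care you allude to.

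You argue that if $g\colon T'\twoheadrightarrow C$ is not radical, then $T'$ has a summand $T_0$ mapped isomorphically onto a summand $C_0$ of $C$, and that ``projecting $f$ away from $T_0$ still gives a left $\add(T)$-approximation (since $\Hom(T_0,T)$ contributes nothing new to the factorization).'' As stated this is not automatic: a general $\phi\colon M\to T$ factors as $\psi f$ with $\psi=(\psi_0,\psi_1)\colon T_0\oplus T_1\to T$, and to conclude $\phi=\psi_1\,\pi_{T_1}f$ you would need $\psi_0\,\pi_{T_0}f=0$, which fails if $f$ has a nonzero component into $T_0$. The fix is exactly the Krull--Remak--Schmidt bookkeeping you gesture at, but it should be made explicit: since $g$ is surjective and not radical, one can pre- and post-compose with automorphisms of $T'$ and $C$ to block-diagonalise $g$ as $g_0\oplus g_1$ with $g_0\colon T_0\to C_0$ an isomorphism; then $\ker g=0\oplus\ker g_1\subseteq T_1$, so (after replacing $f$ by $\alpha f$ for the automorphism $\alpha$ of $T'$, which changes neither the approximation property nor minimality) $f$ actually lands in $T_1$, and $\theta=\iota_{T_1}\pi_{T_1}$ is a non-automorphism with $\theta f=f$. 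Alternatively, once $f$ lands in $T_1$, $\pi_{T_1}f$ \emph{is} a left approximation and the uniqueness-of-minimal-approximations clause of Proposition~\ref{universal-extension} finishes the job, as you intend. The same automorphism step is implicitly needed in part~(2) when you pass from ``$f$ has a summand on which it is a split monomorphism'' to ``$\varepsilon$ splits off a trivial sequence $0\to J\to J\to 0$''; spelling that out (absorbing the off-diagonal component $T_1\to f(J)$ by an automorphism of $T'$) closes the argument. With those sentences added, the proof is complete and correct.
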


\begin{Cor}\label{ext-corollary}
If
\[ \varepsilon\colon 0\to T'\to E\to M\to0 \quad\textrm{and}\quad \eta\colon 0\to T''\to F\to M\to0 \]
are universal $\add(T)$-extensions of $M$, then $E\cong F$ modulo $\add(T)$.
\end{Cor}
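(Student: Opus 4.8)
The plan is to deduce this directly from Proposition~\ref{universal-extension}, applied separately to $\varepsilon$ and to $\eta$, together with the uniqueness of the minimal universal extension. First I would invoke Proposition~\ref{universal-extension} for $\varepsilon$: it gives a decomposition $T'=I\oplus J$ with $J\in\add(T)$ such that, under $\Ext^1(M,T')=\Ext^1(M,I)\oplus\Ext^1(M,J)$, the $J$-component of $\varepsilon$ vanishes and the $I$-component $\bar\varepsilon$ is minimal; in particular $I\in\add(T)$ and $\bar\varepsilon$ is a minimal universal $\add(T)$-extension of $M$. At the level of short-exact sequences this says that $\varepsilon$ is the direct sum of the sequence $\bar\varepsilon\colon 0\to I\to\bar E\to M\to0$ with the split sequence on $J$, so that $E\cong\bar E\oplus J$. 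Applying the same to $\eta$ produces $T''=I'\oplus J'$ with $J'\in\add(T)$, a minimal universal $\add(T)$-extension $\bar\eta\colon 0\to I'\to\bar F\to M\to0$, and $F\cong\bar F\oplus J'$.

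Next I would observe that $\bar\varepsilon$ and $\bar\eta$ are both minimal universal $\add(T)$-extensions of the same module $M$, so the uniqueness part of Proposition~\ref{universal-extension} yields an isomorphism of extensions between them; concretely one writes $\bar\eta=\phi\bar\varepsilon$ with $\phi\colon I\to I'$, and minimality on both sides forces $\phi$ to be an isomorphism, whence the pushout along $\phi$ leaves the middle term unchanged up to isomorphism. Hence $\bar E\cong\bar F$, and therefore
\[ E\oplus J'\;\cong\;\bar E\oplus J\oplus J'\;\cong\;\bar F\oplus J'\oplus J\;\cong\;F\oplus J, \]
with $J,J'\in\add(T)$. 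This is exactly the assertion that $E\cong F$ modulo $\add(T)$.

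The only step needing a little care — and the one I expect to be mildly technical rather than a real obstacle — is the passage in the first paragraph from ``the $J$-component of the $\Ext^1$-class vanishes'' to ``the middle term splits off a copy of $J$''. This holds because $\Ext^1(M,-)$ is additive and, under the Baer/Yoneda description, the extension representing $(\bar\varepsilon,0)\in\Ext^1(M,I)\oplus\Ext^1(M,J)$ is literally $\bar\varepsilon$ summed with the split sequence on $J$, whose middle term is $\bar E\oplus J$. Everything else is formal once Proposition~\ref{universal-extension} is available: there is no induction and no further homological input.
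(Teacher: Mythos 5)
Your argument is correct and is exactly the intended deduction from Proposition~\ref{universal-extension}: decompose each universal extension as a minimal one direct-summed with a trivial piece on an $\add(T)$-module, identify the minimal parts via the uniqueness statement, and conclude $E\oplus J'\cong F\oplus J$ with $J,J'\in\add(T)$. The paper leaves this corollary proofless precisely because it is this routine consequence of the proposition, and your handling of the one subtle point — that killing the $J$-component of the extension class splits off $J$ from the middle term, since pushing a sequence $0\to I\to\bar E\to M\to 0$ out along the inclusion $I\hookrightarrow I\oplus J$ yields middle term $\bar E\oplus J$ — is exactly right.
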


We also have the dual notions of (minimal) universal $\add(T)$-coextensions.

Let $T$ be a fixed module. Given a module we shall use the following notation.
\[ \lambda_M\colon M\to {}_MT \quad\textrm{and}\quad \rho_M\colon T_M\to M \]
will denote the minimal left and right $\add(T)$-approximations of $M$, and
\[ \varepsilon_M\colon 0\to {}_MT'\xrightarrow{a_M} {}_ME\xrightarrow{b_M} M\to 0 \quad\textrm{and}\quad \eta_M\colon 0\to M\xrightarrow{c_M}E_M\xrightarrow{d_M} T'_M\to 0 \]
will denote the minimal universal $\add(T)$-extension and coextension of $M$.

\section{Hereditary Artin Algebras}

From now on, $\Lambda$ will be a basic hereditary artin $R$-algebra, where $R$ is a commutative semisimple artin ring. We have the duality $D\colon\modcat\Lambda\to\modcat\Lambda^{\mathrm{op}}$, $M\mapsto\Hom_R(M,R)$. For basic definitions and results, we refer the reader to \cite{ARS}.

Write $1=e_1+\cdots+e_n$ as a sum of primitive orthogonal idempotents. We then have a complete set of indecomposable projective modules $P(i)=\Lambda e_i$, injective modules $I(i)=D(e_i\Lambda)$, and simple modules $S(i)=P(i)/\mathrm{rad}P(i)=\mathrm{soc}I(i)$. We observe that
\[ \Lambda/\mathrm{rad}\Lambda \cong \prod_i\End(P(i)) \cong \prod_i\End(S(i)) \cong \prod_i\End(I(i)) \]
as products of skew-fields. We write $\rank\Lambda:=n$.

The Grothendieck group $K_0(\Lambda)\cong\mathbb Z^n$ of $\Lambda$ has basis $\epsilon_i:=\dimv S(i)$. Note that $\dimv M=\sum_im_i\epsilon_i$, where $m_i$ is the Jordan-H\"older multiplicicty of $S(i)$ in $M$. Alternatively, $m_i=\dim_{\End(P(i))}\Hom(P(i),M)$, the dimension as a right $\End(P(i))$-module. The Euler form on $\modcat\Lambda$ is given by
\[ \langle M,N\rangle = \ell_R\Hom(M,N)-\ell_R\Ext^1(M,N), \]
where $\ell_R$ denotes the length as an $R$-module. Since $\Lambda$ is hereditary, this descends to a bilinear form on the Grothendieck group $K_0(\Lambda)$.

Let $\tau=D\Ext^1(-,\Lambda)$ be the Auslander-Reiten translate in $\modcat\Lambda$, so
\[ \Hom(N,\tau M) \cong D\Ext^1(M,N) \cong \Hom(\tau^-N,M). \]

Given $\sigma\subset\{1,\ldots,n\}$ we define $e_\sigma:=\sum_{i\in\sigma}e_i$ and $\Lambda_\sigma:=\Lambda/(e_\sigma)$. We observe that $\modcat\Lambda_\sigma\subset\modcat\Lambda$ is a full exact subcategory. The objects are those modules $M$ such that $Me_\sigma=0$. Thus this subcategory is closed under extensions, so that $\Lambda_\sigma$ is again a basic hereditary artin $R$-algebra.

For a $\Lambda$-module $T$ write
\[ \sigma_T:=\{i:Te_i=0\}=\{i:\Hom(P(i),T)=0\}, \]
the complement of $\mathrm{supp}\,T$, and set $\Lambda_T:=\Lambda_{\sigma_T}$. Thus $T$ is naturally a sincere $\Lambda_T$-module. We also define the modules
\[ P_T:=\bigoplus_{i\in\sigma_T}P(i) \quad\textrm{and}\quad I_T:=\bigoplus_{i\in\sigma_T}I(i). \]

\begin{Prop}[Riedtmann-Schofield]\label{HR-generalisation}
Let $T$ and $M$ be modules with $M$ indecomposable. Assume further that $\Ext^1(T,M)=0$. Let $f\colon M\to T'$ be a morphism with $T'\in\add(T)$, and write $f\colon M\overset{\pi}\twoheadrightarrow I\overset{\iota}\hookrightarrow T'$. If $\pi$ is a proper epimorphism, then $\iota$ is a split monomorphism.
\end{Prop}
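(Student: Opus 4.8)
The plan is to move the troublesome extension $0\to\Ker f\to M\to I\to 0$ along a pullback so that it comes to live over the $\add(T)$-object $T'$, and then to exploit that $\End(M)$ is local.

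First I would factor $f$ as $M\xrightarrow{\pi}I\xrightarrow{\iota}T'$ through its image, set $K=\Ker f$ and $C=\operatorname{coker}f$, and record the two short-exact sequences $\xi\colon 0\to K\xrightarrow{k}M\xrightarrow{\pi}I\to 0$ and $0\to I\xrightarrow{\iota}T'\xrightarrow{p}C\to 0$, noting $K\neq 0$ since $\pi$ is a proper epimorphism. Since $\Lambda$ is hereditary we have $\Ext^2(C,K)=0$, so applying $\Hom(-,K)$ to the second sequence shows $\iota^\ast\colon\Ext^1(T',K)\to\Ext^1(I,K)$ is surjective; pick a preimage $\tilde\xi\colon 0\to K\xrightarrow{\tilde k}N\xrightarrow{q}T'\to 0$ of $\xi$. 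The relation $\xi=\iota^\ast\tilde\xi$ realises $M$ as the pullback $N\times_{T'}I$, giving maps $g\colon M\to N$ and $q\colon N\to T'$ with $qg=\iota\pi=f$ and $gk=\tilde k$, and a short diagram chase identifies $\operatorname{coker}g$ with $C$, so that we obtain a short-exact sequence $\zeta\colon 0\to M\xrightarrow{g}N\xrightarrow{pq}C\to 0$. If $\zeta$ splits, a section $t\colon C\to N$ of $pq$ yields the section $qt\colon C\to T'$ of $p$, whence $\iota$ is split; so it suffices to split $\zeta$. (Equivalently, the triple $(\pi,q,\mathrm{id}_C)$ is a morphism of short-exact sequences, so $0\to I\to T'\to C\to 0$ is the pushout of $\zeta$ along $\pi$.)

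To split $\zeta$ I would show $g$ is a split monomorphism. Because $T'\in\add(T)$ and $\Ext^1(T,M)=0$, we have $\Ext^1(T',M)=0$, so applying $\Hom(-,M)$ to $\tilde\xi$ shows $\Hom(N,M)\to\Hom(K,M)$ is onto; lift the inclusion $k$ to a map $h\colon N\to M$ with $h\tilde k=k$. Then $(hg)k=h\tilde k=k$, so $hg-\mathrm{id}_M$ kills $K=\operatorname{im}k$ and therefore factors through $\pi$, say $hg=\mathrm{id}_M+u\pi$ for some $u\colon I\to M$. Now $M$ is indecomposable, so $\End(M)$ is local; and $u\pi$ is not an automorphism, for otherwise $\pi$ would be a split monomorphism and hence, being onto, an isomorphism, contradicting $K\neq 0$. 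Hence $u\pi\in\operatorname{rad}\End(M)$, so $hg=\mathrm{id}_M+u\pi$ is a unit, $g$ is split mono, and $\zeta$ splits.

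The step I expect to be the crux is the opening pullback: the hypothesis $\Ext^1(T,M)=0$ is useless against the extension $\xi$ itself (it is $M$, not $T'$, that fails to receive maps from $\add(T)$), and the pullback is precisely the device that re-presents the situation as the extension $\tilde\xi$ over $T'$, where $\Ext^1(T',M)=0$ does the work by producing $h$. After that, the only thing one must watch is that the failure of $\pi$ to be injective is exactly what forces $u\pi$ into $\operatorname{rad}\End(M)$; the remaining diagram chases and the ``$1+\operatorname{rad}$ is a unit'' observation are routine.
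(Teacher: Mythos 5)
Your proof is correct, and it is in effect the formal dual of the argument in the paper. Where you pull $\xi$ back along $\iota$ (using hereditariness to make $\iota^\ast\colon\Ext^1(T',K)\to\Ext^1(I,K)$ surjective), the paper pushes $\eta$ forward along $\pi$ (using surjectivity of $\pi_\ast\colon\Ext^1(C,M)\to\Ext^1(C,I)$), writing $\eta=\pi\xi'$ for some $\xi'\colon 0\to M\xrightarrow{a}E\to C\to 0$. The pushout square then immediately produces the Mayer--Vietoris sequence $0\to M\xrightarrow{\binom{\pi}{-a}}I\oplus E\xrightarrow{(\iota\ b)}T'\to 0$, which splits because $\Ext^1(T',M)=0$; a retraction $(r\ s)$ satisfies $r\pi-sa=\mathrm{id}_M$, and since $\pi$ has nonzero kernel, $r\pi$ lies in $\mathrm{rad}\End(M)$ by locality of $\End(M)$, so $a$ is a split monomorphism, $\xi'$ splits, and hence so does its pushout $\eta$. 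The three ingredients are identical in both proofs: hereditariness, $\Ext^1(T',M)=0$, and locality of $\End(M)$ triggered by the non-injectivity of $\pi$. What the paper's formulation buys is that the split Mayer--Vietoris sequence absorbs most of the bookkeeping; what your version costs is the extra explicit work of lifting $k$ across $\tilde\xi$ to get $h$ and then computing $hg-\mathrm{id}_M=u\pi$. If you want to tighten your write-up, note that your pullback square likewise produces a split Mayer--Vietoris sequence $0\to M\to N\oplus I\to T'\to 0$, after which the paper's retraction computation applies verbatim and you can dispense with the lift of $k$ entirely.
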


\begin{proof}
Consider the two short exact sequences
\[ \varepsilon\colon 0\to K\to M\xrightarrow{\pi} I\to 0 \quad\textrm{and}\quad \eta\colon 0\to I \xrightarrow{\iota} T'\to C\to 0. \]

Since $\Lambda$ is hereditary, we know that $\pi_\ast\colon\Ext^1(C,M)\to\Ext^1(C,I)$ is onto, so that $\eta=\pi\xi$ is the push-out along $\pi$ of some $\xi\in\Ext^1(C,M)$, say
\[ \xi\colon 0\to M\to E\to C\to 0. \]
This yields a short exact sequence
\[ 0\longrightarrow M\overset{\binom{\pi}{-a}}{\longrightarrow} I\oplus E\overset{(\iota\,b)}{\longrightarrow} T'\longrightarrow 0, \]
which is split since $\Ext^1(T,M)=0$.

If $\pi$ is a proper epimorphism, then, since $M$ is indecomposable, $\iota$ must be a split monomorphism.
\end{proof}

\begin{Cor}[Happel-Ringel]\label{mono-or-epi}
Let $T$ and $M$ be indecomposable modules such that $\Ext^1(T,M)=0$. Then any non-zero $f\colon M\to T$ is either a monomorphism or an epimorphism.

In particular, if $T$ is exceptional, then $\End(T)$ is a skew-field. More generally, if $T$ is basic and rigid, then the quiver of $\End(T)$ contains no oriented cycles.
\end{Cor}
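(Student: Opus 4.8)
My plan is to obtain all three assertions directly from Proposition~\ref{HR-generalisation}. For the dichotomy, given a nonzero $f\colon M\to T$ I factor it through its image, $f\colon M\overset{\pi}\twoheadrightarrow I\overset{\iota}\hookrightarrow T$, and apply Proposition~\ref{HR-generalisation} with $T'=T\in\add(T)$. Either $\pi$ is an isomorphism, in which case $f$ is a monomorphism; or $\pi$ is a proper epimorphism, in which case $\iota$ is a split monomorphism, and since $T$ is indecomposable and $I\neq0$ (as $f\neq0$) the map $\iota$ is an isomorphism, so $f$ is an epimorphism. For the \emph{in particular}: if $T$ is exceptional then $\Ext^1(T,T)=0$, so by the dichotomy every nonzero $g\in\End(T)$ is a monomorphism or an epimorphism; as $T$ has finite $R$-length, such a $g$ is an isomorphism. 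Hence every nonzero element of $\End(T)$ is invertible, i.e.\ $\End(T)$ is a skew-field.

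For the statement on the quiver of $\End(T)$, write $T=\bigoplus_{i=1}^{r}T(i)$ with the $T(i)$ pairwise non-isomorphic and exceptional. Since each $\End(T(i))$ is a skew-field, for $i\neq j$ every nonzero map $T(i)\to T(j)$ is a non-isomorphism lying in $\mathrm{rad}\End(T)$, so an oriented cycle in the quiver of $\End(T)$ yields a cyclic chain of nonzero non-isomorphisms $T(i_0)\to T(i_1)\to\cdots\to T(i_t)=T(i_0)$ between pairwise non-isomorphic summands (pass to a subcycle through distinct vertices if necessary). I would show no such chain exists by taking one with $t$ minimal. If $t=1$ this contradicts $\End(T(i_0))$ being a skew-field. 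If $t\geq2$, rigidity of $T$ gives $\Ext^1(T(i_k),T(i_{k-1}))=0$, so by the dichotomy each map in the chain is a monomorphism or an epimorphism. They cannot all be monomorphisms, nor all epimorphisms, since then the lengths $\ell_R(T(i_k))$ would strictly increase, resp.\ strictly decrease, around the cycle; hence some consecutive pair has the form $T(i_{j-1})\twoheadrightarrow T(i_j)\hookrightarrow T(i_{j+1})$ (indices mod $t$). Its composite has image isomorphic to $T(i_j)$, hence is nonzero. If $T(i_{j-1})\cong T(i_{j+1})$ — which forces $t=2$ — this composite is a nonzero endomorphism of an exceptional module, hence an isomorphism, making the epimorphism $T(i_{j-1})\to T(i_j)$ split, a contradiction. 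Otherwise $t\geq3$ and replacing the two maps by their composite produces a cyclic chain of nonzero non-isomorphisms of length $t-1\geq2$ between pairwise non-isomorphic summands, contradicting minimality.

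The only real work lies in this last step: the immediate observation that every composite around a cycle vanishes (a consequence of the skew-field property alone) does not suffice, and one must combine the monomorphism/epimorphism dichotomy with the length argument and the reduction above. The point to check carefully is that the reduction genuinely preserves the hypotheses — in particular that the new composite is a non-isomorphism, which uses $t\geq3$ to guarantee $T(i_{j-1})\not\cong T(i_{j+1})$ — and that an arrow of the quiver really does provide a nonzero non-isomorphism between the corresponding summands.
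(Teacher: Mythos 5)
Your handling of the dichotomy and of the skew-field assertion is correct and is exactly what the paper intends: factor $f$ through its image and apply Proposition~\ref{HR-generalisation} with $T'=T$; then, for the skew-field claim, observe that a nonzero endomorphism of a finite-length indecomposable which is mono or epi must be an isomorphism. (The paper gives no explicit proof of this corollary, regarding it as immediate from Proposition~\ref{HR-generalisation}.)

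For the no-oriented-cycles claim your argument is correct, but the minimality induction and the case split on whether $T(i_{j-1})\cong T(i_{j+1})$ are unnecessary. Once you have a consecutive pair $T(i_{j-1})\overset{f}{\twoheadrightarrow}T(i_j)\overset{g}{\hookrightarrow}T(i_{j+1})$ with $f$ a proper epimorphism and $g$ a proper monomorphism, the composite $gf$ is nonzero (its image is isomorphic to $T(i_j)$), its kernel contains the nonzero $\Ker f$, so it is not a monomorphism, and its image is a proper submodule of $T(i_{j+1})$, so it is not an epimorphism. Rigidity gives $\Ext^1(T(i_{j+1}),T(i_{j-1}))=0$, so the dichotomy applies to $gf\colon T(i_{j-1})\to T(i_{j+1})$ and yields an immediate contradiction --- regardless of whether the endpoints are isomorphic. (In the isomorphic case, $gf$ is a nonzero endomorphism with nonzero kernel, which already contradicts $\End(T(i_{j-1}))$ being a skew-field, without invoking splitness.) So you can delete the passage to a minimal $t$, the case $t=1$, and the reduction step; the length argument finding an epi-then-mono pair plus the dichotomy finishes the proof directly. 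What you did is valid, it just re-proves by induction something the dichotomy hands you outright.
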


\begin{Cor}\label{min-left-mono-or-epi}
Let $T$ and $M$ be modules with $M$ indecomposable and $\Ext^1(T,M)=0$. Then the minimal left $\add(T)$-approximation $\lambda_M\colon M\to {}_MT$ is either a mono\-morphism or an epimorphism.
\end{Cor}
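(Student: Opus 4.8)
The plan is to apply Proposition \ref{HR-generalisation} directly to the minimal left $\add(T)$-approximation $\lambda_M\colon M\to {}_MT$. Since $M$ is indecomposable, we factor $\lambda_M$ through its image: write $\lambda_M\colon M\overset{\pi}\twoheadrightarrow I\overset{\iota}\hookrightarrow {}_MT$, where $\pi$ is an epimorphism and $\iota$ a monomorphism. There are exactly two cases, according to whether $\pi$ is an isomorphism or a proper epimorphism.

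If $\pi$ is an isomorphism, then $\lambda_M=\iota\pi$ is a monomorphism and we are done. If instead $\pi$ is a proper epimorphism, then the hypotheses of Proposition \ref{HR-generalisation} are exactly met ($M$ indecomposable, $\Ext^1(T,M)=0$, and $T'={}_MT\in\add(T)$), so we conclude that $\iota$ is a split monomorphism. The remaining step is to argue that this forces $\pi$ to be an epimorphism onto ${}_MT$ itself, i.e. that $\iota$ is in fact an isomorphism; this is where minimality of the approximation enters. Indeed, if $\iota\colon I\hookrightarrow {}_MT$ splits, write ${}_MT=I\oplus J$. Then $\lambda_M$ has image contained in the summand $I$, so $\lambda_M$ factors as $M\xrightarrow{\lambda'} I\hookrightarrow {}_MT$ with $\lambda'$ still a left $\add(T)$-approximation (any map from $M$ to an object of $\add(T)$ factors through $\lambda_M$, hence through $\lambda'$). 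By minimality of $\lambda_M$ — concretely, using that $\lambda_M$ is minimal iff no proper summand of ${}_MT$ receives the approximation, equivalently the endomorphism realising the projection onto $I$ composed appropriately must be an automorphism, exactly as in the proof of Proposition \ref{universal-extension} — we get $J=0$, so $\iota$ is an isomorphism and $\lambda_M=\pi$ (up to this isomorphism) is an epimorphism.

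I expect the only real subtlety to be the bookkeeping around minimality in the last step: one must be slightly careful that "minimal left $\add(T)$-approximation" is being used in the sense that the induced map on cokernels is a radical morphism (equivalently, no nonzero summand of ${}_MT$ splits off trivially), which is precisely the content recorded in the Corollary following Proposition \ref{universal-extension}. Given that, the argument is immediate: either $\pi$ is already an isomorphism (monomorphism case) or Proposition \ref{HR-generalisation} together with minimality forces $\iota$ to be an isomorphism (epimorphism case). No case can give a map that is genuinely neither mono nor epi, because any splitting of $\iota$ is killed by minimality.
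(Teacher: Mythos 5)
Your proof is correct and is the intended argument: the paper states Corollary \ref{min-left-mono-or-epi} without proof as an immediate consequence of Proposition \ref{HR-generalisation}, and you have filled in exactly the right details, including the point that minimality of $\lambda_M$ is what upgrades ``$\iota$ split mono'' to ``$\iota$ iso'' (equivalently, via the Corollary after Proposition \ref{universal-extension}, a nonzero complementary summand $J$ of ${}_MT$ would make the cokernel non-radical). No gaps.
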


There are obvious dual results concerning minimal right $\add(T)$-approximations.

\begin{Prop}[Kerner]\label{dim-vector}
Let $T$ and $U$ be rigid such that $\dimv T=\dimv U$. Then $T\cong U$.
\end{Prop}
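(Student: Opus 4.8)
The plan is to prove the statement by induction on $\rank T = \rank U =: r$, using the perpendicular-category / approximation machinery developed above, and in particular Proposition~\ref{HR-generalisation} and its corollaries. The base case $r=0$ is trivial. For the inductive step, the idea is to peel off a common indecomposable direct summand, or failing that, to use the rigidity together with the equality of dimension vectors to force an isomorphism directly.

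First I would pick an indecomposable summand $M$ of $T$ which is ``minimal'' in a suitable sense — for instance, one whose dimension vector is not a proper sum of dimension vectors of other indecomposable summands, or (using Corollary~\ref{mono-or-epi}, which tells us the quiver of $\End(T)$ has no oriented cycles) a source in that quiver, so that every nonzero map from $M$ to another summand of $T$ is a proper monomorphism. The key step is then to show that $M$ must also be a direct summand of $U$. Since $\dimv M \le \dimv U$, one knows $M$ ``fits inside'' $U$ numerically; the task is to promote this to an actual split embedding. Here I would use that $\Ext^1(U,M) \subseteq \Ext^1(U,T) = 0$ only after checking $M$ sits in the right place, so more carefully: consider a minimal left $\add(T)$-approximation or the relevant $\Hom$/$\Ext$ computation comparing $\langle M, T\rangle$ and $\langle M, U\rangle$, which agree since the dimension vectors agree and the Euler form descends to $K_0(\Lambda)$. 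Using $\Ext^1(T,M)=0$ and $\Ext^1(M,T)=0$ one gets $\langle M,T\rangle = \ell_R\Hom(M,T)$, and similarly with $U$ once one knows $\Ext^1$ between them vanishes. The cleanest route is probably: show $\Hom(U,M)\ne 0$ and, via Proposition~\ref{HR-generalisation} applied to the minimal right $\add(U)$-approximation of $M$ (noting $\Ext^1(M,U)=0$ follows from $\dimv M + \dimv(T/M)$ considerations once one has the splitting — so this needs care about the order of the argument), deduce that $M$ is a summand of $U$.

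Once $M \mid T$ and $M \mid U$, write $T = M \oplus T'$ and $U = M \oplus U'$ with $T', U'$ rigid (as summands of rigid modules) and $\dimv T' = \dimv U'$; by induction $T' \cong U'$, hence $T \cong U$ by Krull--Remak--Schmidt, and we are done. Alternatively, and perhaps more robustly, I would pass to the perpendicular category ${}^\perp M \cap M^\perp$ (or rather the relevant one-sided perpendicular category of $M$): both $T'$ and $U'$ become rigid objects there with equal dimension vectors in its Grothendieck group, the perpendicular category is $\modcat\Gamma$ for a hereditary artin algebra $\Gamma$ of strictly smaller rank, and induction applies. I would want to invoke the fact (cf.\ Corollary~\ref{lin-indept}, once available) that dimension vectors of indecomposable summands of a rigid module are linearly independent, which guarantees the rank drops and keeps the dimension-vector equality meaningful after passing to the subquotient.

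The main obstacle I expect is the chicken-and-egg problem in the ``$M$ is a summand of $U$'' step: to apply Proposition~\ref{HR-generalisation} or Corollary~\ref{min-left-mono-or-epi} to compare $M$ with $U$ one wants $\Ext^1(U,M)=0$ or $\Ext^1(M,U)=0$, but $M$ and $U$ are a priori unrelated modules — the hypothesis $\Ext^1(U,U)=0$ does not immediately give this. The resolution should come from the numerical input: since $\dimv T = \dimv U$, the Euler form gives $\langle M, U\rangle = \langle M, T\rangle = \ell_R\Hom(M,T) \ge \ell_R\End(M) > 0$ and $\langle U, M\rangle = \langle T,M\rangle = \ell_R\Hom(T,M)$, so at least one of $\Hom(M,U)$, $\Hom(U,M)$ is nonzero; combining this with a careful choice of $M$ (say, one that is $\Hom(-,T)$-injective among the summands, i.e.\ a sink in the quiver of $\End T$) and a dimension count on the image of the relevant approximation morphism, one should be able to force the splitting without ever needing a vanishing $\Ext^1$ between $M$ and $U$ as an external hypothesis. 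Making this dimension bookkeeping precise — in particular handling the case where the approximation map is an epimorphism rather than a monomorphism — is where the real work lies.
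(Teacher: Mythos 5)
Your plan has the right overall shape (exhibit a common indecomposable summand, then induct on rank), and you correctly isolate the central obstruction: there is a priori no $\Ext^1$-vanishing between a summand of $T$ and a summand of $U$, so Proposition~\ref{HR-generalisation} and Corollary~\ref{mono-or-epi} cannot be applied directly to a map from $M$ to $U$. However, you do not actually close this gap --- you explicitly defer (``where the real work lies'') --- and the alternative route you sketch via perpendicular categories would be circular: the perpendicular-category machinery (Theorems~\ref{perp-thm} and~\ref{proj-gen-inj-cogen}) relies on Corollary~\ref{lin-indept} and Proposition~\ref{r=n}, both of which are downstream of the very proposition you are trying to prove.

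The missing idea is to work with the \emph{image} of a map rather than with a summand of $T$ directly. Given a nonzero $f\colon T'\to U'$ between indecomposable summands, factor it as $T'\twoheadrightarrow I\hookrightarrow U'$. Since $I$ is a quotient of $T'$ and $\Lambda$ is hereditary, $\Ext^1(T,I)=0$; since $I$ is a submodule of $U'$, $\Ext^1(I,U)=0$. These two vanishings --- inherited simultaneously from the $T$-side and the $U$-side --- are exactly what your direct comparison of $M$ with $U$ lacked. Combined with the Euler-form equalities $\langle T,I\rangle=\langle U,I\rangle>0$ and $\langle I,T\rangle=\langle I,U\rangle>0$, they produce nonzero maps $U''\to I$ and $I\to T''$, to which Corollary~\ref{mono-or-epi} now legitimately applies. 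One then shows: either $I\cong T'$ or $I\cong U'$, and moreover any epimorphism $T'\twoheadrightarrow U'$ yields an epimorphism $U'\twoheadrightarrow T''$ (and dually). Starting from a source $U'$ in the quiver of $\End(U)$ forces the first map to be an epimorphism, and since an infinite chain $T_1\twoheadrightarrow U_2\twoheadrightarrow T_3\twoheadrightarrow\cdots$ of proper epimorphisms is impossible, the chain stabilises at an isomorphism, giving the desired common summand. So the chicken-and-egg problem is resolved not by a cleverer choice of $M$, nor by a dimension count on approximation maps, but by observing that the image $I$ sits in both perpendicular positions at once.
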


\begin{proof}
Let $U'$ be an indecomposable summand of $U$. Since $\langle T,U'\rangle=\langle U,U'\rangle>0$, there exists a non-zero map $T'\to U'$ for some indecomposable summand $T'$ of $T$. Dually, given $T'$, there exists a non-zero map $T'\to U'$ for some $U'$.

Let $T'\to U'$ be non-zero. Factoring this via its image $I$, we observe that $\Ext^1(T,I)=0=\Ext^1(I,U)$. Now, since $\langle U,I\rangle=\langle T,I\rangle>0$, there exists a non-zero homomorphism $U''\to I$ with $U''$ an indecomposable summand of $U$. The composition $U''\to I\hookrightarrow U'$ is non-zero, so either $I\cong U'$ or else $U''\hookrightarrow I$ is injective. Similarly, either $T'\cong I$ or else there exists an epimorphism $I\twoheadrightarrow T''$ for some indecomposable summand $T''$ of $T$.

Thus, given any epimorphism $T'\twoheadrightarrow U'$ there exists an epimorphism $U'\twoheadrightarrow T''$.

Also, since the quiver of $\End(U)$ has no oriented cycles, there exists some $U'$ corresponding to a source. Hence if $T'\to U'$ is non-zero, then it is an epimorphism. For, we cannot have a proper monomorphism $U''\hookrightarrow U'$, so $I\cong U'$ in the notation above.

This argument, together with the analogous result given by exchanging $T$ and $U$, shows that there exists an infinite chain of epimorphisms $T_1\twoheadrightarrow U_2\twoheadrightarrow T_3\cdots$. Since we cannot have an infinite chain of proper epimorphisms, we must have that $T'\cong U'$ for some indecomposable summands $T'$ and $U'$ of $T$ and $U$ respectively. Induction on the number of indecomposable summands finishes the proof.
\end{proof}

\begin{Cor}[Happel-Ringel]\label{lin-indept}
Let $T=\bigoplus_{i=1}^rT(i)$ be a basic rigid module. Then the $\dimv T(i)$ are linearly independent in $K_0(\Lambda)$, so $r\leq n$. We set $\rank T:=r$.
\end{Cor}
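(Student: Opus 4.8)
The plan is to deduce this from Proposition \ref{dim-vector}, exactly as the introduction hints. Suppose for contradiction that the dimension vectors $\dimv T(1),\ldots,\dimv T(r)$ are linearly dependent in $K_0(\Lambda)\cong\mathbb Z^n$. Clearing denominators, we obtain a non-trivial relation with integer coefficients, which we can rewrite by collecting positive and negative coefficients on opposite sides:
\[ \sum_{i\in A}a_i\,\dimv T(i) = \sum_{j\in B}b_j\,\dimv T(j), \]
where $A$ and $B$ are disjoint subsets of $\{1,\ldots,r\}$, not both empty, and all $a_i,b_j$ are positive integers. Since the $T(i)$ are pairwise non-isomorphic, the left-hand module $U:=\bigoplus_{i\in A}T(i)^{a_i}$ and the right-hand module $V:=\bigoplus_{j\in B}T(j)^{b_j}$ are non-isomorphic (they have disjoint sets of indecomposable summands, and at least one of them is non-zero — in fact both must be non-zero since dimension vectors of non-zero modules are non-zero, being nonnegative and not identically zero). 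But $\dimv U=\dimv V$, and both $U$ and $V$ are rigid: they are direct sums of copies of summands of the rigid module $T$, so $\Ext^1(U,U)$, $\Ext^1(V,V)$ are direct sums of copies of $\Ext^1(T(i),T(j))=0$. By Proposition \ref{dim-vector} this forces $U\cong V$, a contradiction.

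The one point requiring a little care is the degenerate case in which one side of the rewritten relation is empty, say $B=\emptyset$; then we would be claiming $\sum_{i\in A}a_i\dimv T(i)=0$ with $A$ non-empty and all $a_i>0$. This is impossible because each $\dimv T(i)$ lies in the positive cone $\mathbb N^n\setminus\{0\}$ (coordinates are Jordan--Hölder multiplicities), so a positive integer combination of them is again non-zero. Hence in any genuine linear dependence both $A$ and $B$ are non-empty, and the argument above applies. The bound $r\leq n$ is then immediate, since $r$ linearly independent vectors in a free abelian group of rank $n$ force $r\leq n$.

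I do not expect any serious obstacle here; the substance has already been done in Proposition \ref{dim-vector}, and the remaining work is just the standard manipulation of turning a $\mathbb Z$-linear dependence into an isomorphism of modules via their common dimension vector. The only thing to be slightly careful about is that Proposition \ref{dim-vector} is applied to the possibly-decomposable rigid modules $U$ and $V$ (with multiplicities), not to $T$ itself — but that proposition is stated for arbitrary rigid modules, so this is fine.
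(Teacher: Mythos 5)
Your proof is correct and follows essentially the same route as the paper's: rewrite a hypothetical $\mathbb Z$-linear dependence as an equality of dimension vectors of two modules built from summands of $T$, invoke Proposition \ref{dim-vector} to get an isomorphism, and conclude via Krull--Remak--Schmidt. The only cosmetic difference is that the paper allows the two coefficient families to overlap and derives $m_i=n_i$ directly, whereas you separate the supports and treat the degenerate case explicitly; either way the substance is identical.
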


\begin{proof}
Any linear relation amongst the $\dimv T(i)$ yields an equality $\sum m_i\dimv T(i)=\sum n_i\dimv T(i)$ with the $m_i$ and $n_i$ non-negative integers. Setting $X=\bigoplus_iT(i)^{m_i}$ and $Y=\bigoplus_iT(i)^{n_i}$, Proposition \ref{dim-vector} implies $X\cong Y$. Thus $m_i=n_i$ for all $i$ by the Krull-Remak-Schmidt Theorem.
\end{proof}

\section{Tilting Modules and the Bongartz Complement}

In this section, $T$ will denote a rigid module.

A tilting module $T$ is a rigid module such that there exists a short exact sequence
\[ 0 \to \Lambda \to T_1 \to T_2 \to 0 \]
with $T_i\in\add(T)$. In other words, $\Lambda$ has an $\add(T)$-coresolution of length $1$.

In \cite{Bongartz} Bongartz proved that every rigid module can be completed to a tilting module. For, consider the universal $\add(T)$-coextension of $\Lambda$
\[ \eta_\Lambda\colon 0\to\Lambda\to E_\Lambda\to T'_\Lambda\to 0. \]
By the universal property, $\Ext^1(T,E_\Lambda)\cong\Ext^1(T,T'_\Lambda)=0$. On the other hand, since $\Lambda$ is projective, given any $M$, $\Ext^1(T'_\Lambda,M)=0$ implies $\Ext^1(E_\Lambda,M)=0$. In particular, setting $M=T\oplus E_\Lambda$, we see that $T\oplus E_\Lambda$ is rigid. Since $\Lambda$ has an $\add(T\oplus E_\Lambda)$-coresolution of length $1$, we deduce that $T\oplus E_\Lambda$ is tilting. Let $B\in\add(E_\Lambda)$ be basic and coprime to $T$ (i.e. having no indecomposable summands in common with $T$) such that $T\oplus B$ is tilting module. We call $T\oplus B$ the Bongartz completion of $T$, and $B$ the Bongartz complement.

\begin{Prop}[Happel-Ringel]\label{r=n}
Let $T=\bigoplus_{i=1}^rT(i)$ be a basic rigid module. Then $T$ tilting if and only if $r=n$. In this case, the $\dimv T(i)$ form a basis of $K_0(\Lambda)$.
\end{Prop}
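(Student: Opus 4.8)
The plan is to prove both directions using the results already established, especially Corollary \ref{lin-indept} and the existence of the Bongartz completion. For the forward direction, suppose $T$ is tilting, so there is a short exact sequence $0\to\Lambda\to T_1\to T_2\to 0$ with $T_i\in\add(T)$. Applying the Euler form, for any module $M$ we get $\langle\Lambda,M\rangle=\langle T_1,M\rangle-\langle T_2,M\rangle$ in $K_0(\Lambda)$. The idea is to show that the classes $\dimv T(i)$ span $K_0(\Lambda)$: since $\langle P(j),-\rangle$ for $j=1,\dots,n$ give a basis of the dual of $K_0(\Lambda)$ (as $\langle P(j),M\rangle=\ell_{\End(P(j))}\Hom(P(j),M)$ records the $j$-th coordinate up to a unit), the resolution shows that the functional $\langle\Lambda,-\rangle$ — and hence, decomposing $\Lambda=\bigoplus P(j)$, each $\langle P(j),-\rangle$ — factors through the span of the $\dimv T(i)$; more precisely the span of $\{\dimv T(i)\}$ contains $\dimv\Lambda=\sum\dimv P(j)$, and one repeats the argument after passing to suitable summands. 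A cleaner route: the subgroup generated by the $\dimv T(i)$ has finite index $d$ in $K_0(\Lambda)$ (it has full rank $n$ by the coresolution argument once we know $r=n$), and the coresolution of $\Lambda$ forces $d=1$. Combined with Corollary \ref{lin-indept}, which gives $r\le n$, I must still produce the reverse inequality $r\ge n$ in the tilting case; this comes directly from the coresolution, since $\dimv\Lambda$ lies in the span of the $\dimv T(i)$ and, applying the same to each indecomposable projective and using that the $\dimv P(j)$ are a basis, the $\dimv T(i)$ span a finite-index subgroup, forcing $r=n$.

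For the converse, suppose $r=n$. By Bongartz's theorem (recalled just above the statement), $T$ can be completed to a tilting module $T\oplus B$ with $B$ basic and coprime to $T$. By Corollary \ref{lin-indept} applied to the rigid module $T\oplus B$, the classes $\dimv T(i)$ together with the $\dimv B(j)$ are linearly independent in $K_0(\Lambda)\cong\mathbb Z^n$; but there are already $n$ of the former, so there can be no summands $B(j)$ at all, i.e. $B=0$ and $T=T\oplus B$ is tilting.

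Finally, the last sentence: once $T$ is tilting with $r=n$, the $\dimv T(i)$ are $n$ linearly independent elements of $K_0(\Lambda)\cong\mathbb Z^n$, and the coresolution $0\to\Lambda\to T_1\to T_2\to 0$ shows (by the index argument above) that they generate all of $K_0(\Lambda)$, hence form a basis.

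The main obstacle I anticipate is the forward direction: turning the $\add(T)$-coresolution of $\Lambda$ into the statement that the $\dimv T(i)$ form a \emph{basis} (not merely a spanning set of a finite-index subgroup) of $K_0(\Lambda)$. The key point to get right is that $\langle P(j),-\rangle$, $j=1,\dots,n$, form a dual basis up to units in the skew-fields $\End(P(j))$, so that membership of $\dimv\Lambda$ in the span of the $\dimv T(i)$, applied componentwise, yields surjectivity onto $K_0(\Lambda)$ exactly — this is where heredity (so that $\langle-,-\rangle$ descends to $K_0$) and the length-one coresolution are both essential.
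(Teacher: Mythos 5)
Your proof is correct and takes essentially the same route as the paper: the converse direction via the Bongartz completion plus Corollary \ref{lin-indept}, and the forward direction by observing that each indecomposable projective $P(j)$ inherits an $\add(T)$-coresolution, so each $\dimv P(j)$ is a $\mathbb Z$-linear combination of the $\dimv T(i)$, whence those classes span $K_0(\Lambda)$ and $r=n$. The digression through the functionals $\langle P(j),-\rangle$ and finite-index subgroups is unnecessary --- once you have that each $\dimv P(j)$ lies in the integer span of the $\dimv T(i)$, the span is all of $K_0(\Lambda)$ outright, and together with linear independence this immediately gives both $r=n$ and the basis statement.
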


\begin{proof}
Suppose $r=n$ and consider the Bongartz completion $T\oplus B$. Then $T\oplus B$ is rigid, so $B=0$ by Corollary \ref{lin-indept} and $T$ is tilting.

Conversely, let $T$ be tilting. Since $\Lambda$ has an $\add(T)$-coresolution, the same is true of each indecomposable projective $P(j)$. Thus each $\dimv P(j)$ is a linear combination of the $\dimv T(i)$, and since the $\dimv P(j)$ form a basis of $K_0(\Lambda)$, so too do the $\dimv T(i)$.
\end{proof}

Dually, taking the universal $\add(T)$-extension of $D\Lambda$
\[ \varepsilon_{D\Lambda}\colon 0 \to {}_{D\Lambda}T' \to {}_{D\Lambda}E \to D\Lambda \to 0, \]
we see that $T\oplus {}_{D\Lambda}E$ is rigid. Since each injective indecomposable has an $\add(T)$-resolution of length $1$, we see that the dimension vectors of the indecomposable summands of $T\oplus {}_{D\Lambda}E$ span the Grothendieck group. Hence $T\oplus {}_{D\Lambda}E$ is tilting by Proposition \ref{r=n}. We can therefore define the dual Bongartz completion $T\oplus C$ and the dual Bongartz complement $C$.

\begin{Lem}\label{Bongartz-generates}
Suppose $\Ext^1(T,M)=0$. Then $\Ext^1(B,M)=0$ and $T\oplus B$ generates $M$.
\end{Lem}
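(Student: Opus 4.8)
The plan is to use the Bongartz completion $T\oplus B$ together with the universal coextension sequence $\eta_\Lambda\colon 0\to\Lambda\to E_\Lambda\to T'_\Lambda\to 0$ from which $B$ was extracted. Recall that $B\in\add(E_\Lambda)$ and, since $T\oplus E_\Lambda$ is rigid with $T\oplus B$ tilting, we have $\add(E_\Lambda)\subseteq\add(T\oplus B)$; in particular any vanishing of $\Ext^1$ against $E_\Lambda$ follows from vanishing against $T\oplus B$, and conversely.

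First I would establish $\Ext^1(B,M)=0$. Since $B$ is a summand of $E_\Lambda$, it suffices to show $\Ext^1(E_\Lambda,M)=0$. From the sequence $\eta_\Lambda$ we get the exact sequence
\[ \Ext^1(T'_\Lambda,M)\to\Ext^1(E_\Lambda,M)\to\Ext^1(\Lambda,M)=0, \]
where the last term vanishes because $\Lambda$ is projective. Now $T'_\Lambda\in\add(T)$, so $\Ext^1(T'_\Lambda,M)=0$ by hypothesis $\Ext^1(T,M)=0$. Hence $\Ext^1(E_\Lambda,M)=0$, and therefore $\Ext^1(B,M)=0$; combined with $\Ext^1(T,M)=0$ this gives $\Ext^1(T\oplus B,M)=0$.

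Next I would show that $T\oplus B$ generates $M$, i.e. there is a surjection from a module in $\add(T\oplus B)$ onto $M$. The standard argument: since $T\oplus B$ is a tilting module, applying $\Hom(-,M)$ to the $\add(T\oplus B)$-coresolution $0\to\Lambda\to T_1\to T_2\to 0$ (with $T_i\in\add(T\oplus B)$), and using $\Ext^1(T\oplus B,M)=0$ just proved, one sees that every homomorphism $\Lambda\to M$ — in particular a surjection, since $\Lambda$ is a projective generator — factors through $T_1\in\add(T\oplus B)$; hence the composite $T_1\to M$ is surjective. Thus $T\oplus B$ generates $M$.

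The only mild subtlety, and the step I would be most careful with, is the claim $\add(E_\Lambda)\subseteq\add(T\oplus B)$ — equivalently that $B$ together with $T$ accounts for all indecomposable summands of $E_\Lambda$. This is exactly how the Bongartz complement was defined ($B\in\add(E_\Lambda)$ basic, coprime to $T$, with $T\oplus B$ tilting), and it follows from Proposition \ref{universal-extension} applied to $\eta_\Lambda$: the minimal universal coextension has no summands of $T'_\Lambda$ splitting off trivially, and every indecomposable summand of $E_\Lambda$ not in $\add(T)$ is a summand of the Bongartz complement. Once this is in hand the rest is routine diagram chasing with the two four-term exact sequences above.
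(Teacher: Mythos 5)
Your proof is correct and follows essentially the same route as the paper: apply $\Hom(-,M)$ to a short exact sequence $0\to\Lambda\to T_1\to T_2\to 0$ with $T_2\in\add(T)$ to get vanishing of $\Ext^1$ and factor a surjection $\Lambda^r\twoheadrightarrow M$ through $T_1$. The paper takes $T_1=E_\Lambda$, $T_2=T'_\Lambda$ directly and in one step concludes $E_\Lambda$ generates $M$; you first deduce $\Ext^1(T\oplus B,M)=0$ and then apply the same argument to the abstract $\add(T\oplus B)$-coresolution guaranteed by $T\oplus B$ being tilting. This small rearrangement actually sidesteps the need for the inclusion $\add(E_\Lambda)\subseteq\add(T\oplus B)$ in the generation step (you only use that $B$ is a summand of $E_\Lambda$ for the $\Ext^1$-vanishing), whereas the paper implicitly relies on that inclusion to pass from ``$E_\Lambda$ generates $M$'' to the stated conclusion. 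Both are fine; yours is marginally more self-contained on that point.
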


\begin{proof}
Let $0\to\Lambda\to E_\Lambda\to T'_\Lambda\to 0$ be a universal $\add(T)$-coextension of $\Lambda$. Applying $\Hom(-,M)$ shows that $\Ext^1(E_\Lambda,M)=0$. Also, we know that $\Lambda^r\twoheadrightarrow M$ for some $r$. Since $\Ext^1(T,M)=0$, this map factors through $\Lambda^r\to E_\Lambda^r$, so $E_\Lambda$ generates $M$.
\end{proof}

The next result is a slight generalisation of key results in \cite{RS1,HU1}. If $\mathcal A$ is an additive subcategory of an abelian category $\mathcal C$, we define a category $\mathcal C/\mathcal A$ by quotienting out all morphisms factoring through $\mathcal A$.

\begin{Thm}\label{min-left-inj}
Let $M$ be coprime to $T$ and with $\Ext^1(T,M)=0$. Suppose that the minimal left $\add(T)$-approximation $\lambda_M\colon M\to T'$ is a monomorphism. Then
\begin{enumerate}
\item the cokernel $\rho_N\colon T'\to N$ is a minimal right $\add(T)$-approximation of $N$;
\item $M$ is a summand of the Bongartz complement of $T$, and $N$ is a summand of the dual Bongartz complement of $T$;
\item $\End(M)/\add(T)\cong\Ext^1(N,M)$ as left $\End(M)$-modules, and dually for $N$. Thus $\End(M)/\add(T)\cong\End(N)/\add(T)$ as algebras. In particular, if $M$ is indecomposable, then so is $N$, and $\End(M)\cong\End(N)$.
\end{enumerate}
\end{Thm}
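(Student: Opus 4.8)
The plan is to exploit the short exact sequence $\varepsilon\colon 0\to M\xrightarrow{\lambda_M} T'\xrightarrow{\rho_N} N\to 0$ produced by the hypothesis that $\lambda_M$ is a monomorphism, and to run the three parts more or less simultaneously, deriving (2) and (3) from (1) together with Lemma \ref{Bongartz-generates}.

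First I would establish (1). Since $\lambda_M$ is a left $\add(T)$-approximation, applying $\Hom(-,T)$ to $\varepsilon$ and using $\Ext^1(T,T)=0$ shows that $\Hom(T',T)\to\Hom(M,T)$ is surjective; combined with the connecting map this forces $\Hom(N,T)\xrightarrow{\rho_N^{*}}\Hom(T',T)$ to have the property that its cokernel injects into $\Ext^1(M,T)$, but more to the point, I would argue directly that any map $T''\to N$ with $T''\in\add(T)$ lifts through $\rho_N$: lifting is obstructed by a class in $\Ext^1(T'',M)$, which vanishes since $M$ is coprime to $T$ and... no --- coprimeness does not give that. Instead, the correct route: apply $\Hom(T,-)$ to $\varepsilon$. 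Since $\Ext^1(T,M)=0$ we get $0\to\Hom(T,M)\to\Hom(T,T')\to\Hom(T,N)\to 0$, which is not yet what is needed. The right statement is that $\rho_N$ is a \emph{right} $\add(T)$-approximation, i.e. $\Hom(T,T')\to\Hom(T,N)$ is onto --- and that is exactly the exact sequence just written, using $\Ext^1(T,M)=0$. Minimality of $\rho_N$ follows from minimality of $\lambda_M$: if $\theta\in\End(T')$ satisfies $\rho_N\theta=\rho_N$ then $\theta$ restricts to an endomorphism of $M=\ker\rho_N$, and $\theta\lambda_M=\lambda_M\theta'$ with $\theta'\in\End(M)$; chasing the diagram and using that $\lambda_M$ is a minimal left approximation (so any such $\theta'$ compatible with an endomorphism of $T'$ forces $\theta$ to be... ) one concludes $\theta$ is an automorphism. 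This diagram-chase, translating minimality of $\lambda_M$ into minimality of $\rho_N$, is where I expect to have to be careful, but it is formal.

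For (2): $T\oplus M$ is rigid --- indeed $\Ext^1(M,M)=0$ because $\lambda_M$ is a monomorphism whose cokernel $N$ satisfies $\Ext^1(M,N)$-considerations... more simply, apply $\Hom(M,-)$ to $\varepsilon$ and use $\Ext^1(M,T')=0$ (as $\Ext^1(T,M)=0$ gives $\Ext^1(M,T)=0$? no). Here I would instead invoke Lemma \ref{Bongartz-generates}: since $\Ext^1(T,M)=0$, the Bongartz complement $B$ satisfies $\Ext^1(B,M)=0$ and $T\oplus B$ generates $M$; combined with $M$ rigid and coprime to $T$, a rank count via Corollary \ref{lin-indept} (the dimension vectors of $T\oplus M$ together with the remaining summands of $B$ stay independent) shows $M\in\add(B)$. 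Dually $N$ is a summand of the dual Bongartz complement. For (3): apply $\Hom(N,-)$ to $\varepsilon$. Using $\Ext^1(N,T')=0$ (which holds since $N$ is a summand of the dual Bongartz complement, hence $T\oplus N$ rigid) we get $\Hom(N,T')\to\Hom(N,N)\to\Ext^1(N,M)\to 0$, and the image of $\Hom(N,T')\to\Hom(N,N)$ is precisely the maps factoring through $\add(T')=\add(T)$; hence $\End(N)/\add(T)\cong\Ext^1(N,M)$ as right $\End(N)$-modules. Dually, applying $\Hom(-,M)$ to $\varepsilon$ and using $\Ext^1(T',M)=0$ gives $\End(M)/\add(T)\cong\Ext^1(N,M)$ as left $\End(M)$-modules. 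The algebra isomorphism $\End(M)/\add(T)\cong\End(N)/\add(T)$ then comes from the standard fact that the bimodule $\Ext^1(N,M)$ over $(\End(M),\End(N))$ realises a Morita-type context making both quotient rings act faithfully and transitively on it via pushout/pullback; concretely, an endomorphism of $M$ not in $\add(T)$ induces, via pushout along $\varepsilon$ and the fact that the pushed-out sequence is again of the same form, a well-defined endomorphism of $N$, and this assignment is a ring isomorphism on the quotients.

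The main obstacle is (3), specifically promoting the two one-sided module isomorphisms with $\Ext^1(N,M)$ into a genuine \emph{ring} isomorphism $\End(M)/\add(T)\cong\End(N)/\add(T)$ and checking it respects composition; the indecomposable case (where both quotients are skew-fields by Corollary \ref{mono-or-epi}, so the isomorphism is automatic once one knows $N\neq 0$ and $\End(N)/\add(T)\neq 0$) is easy, but the general statement requires tracking the bimodule structure carefully. Everything else --- part (1), the rigidity and rank-counting in part (2) --- is a routine consequence of the universal-extension formalism of Section 1 and Corollary \ref{lin-indept}.
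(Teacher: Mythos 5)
Your overall plan---work from the short exact sequence $\varepsilon\colon 0\to M\xrightarrow{\lambda_M}T'\xrightarrow{\rho_N}N\to 0$, apply $\Hom$ in several directions, then use Lemma \ref{Bongartz-generates} plus a rank count---is essentially the paper's. The pushout description of the algebra isomorphism in (3) is also exactly what the paper does. But two steps in your sketch have real gaps rather than routine omissions.

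In part (1), the minimality of $\rho_N$ is not a formal consequence of the minimality of $\lambda_M$, and your proposed diagram chase (``$\theta\lambda_M=\lambda_M\theta'$, and then\ldots'') does not close; in fact nowhere in your argument for minimality do you use that $M$ and $T$ are coprime, and that hypothesis is indispensable. (Take $M\in\add(T)$: then $\lambda_M$ is a split monomorphism, hence minimal, but $\rho_N$ will typically fail to be right minimal.) The correct route is the one the paper takes via the unnumbered Corollary after Proposition \ref{universal-extension}: coprimeness of $M$ and $T$ forces $\lambda_M$ to be a radical morphism, and the dual of that Corollary then says the right approximation whose kernel inclusion is radical is right minimal. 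Conversely, minimality of $\lambda_M$ makes $\rho_N$ radical, giving coprimeness of $N$ and $T$, which you also quietly need later.

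In part (2), you invoke ``$M$ rigid'' and ``$\Ext^1(M,T)=0$'' in the rank-counting step but never establish them, and you yourself flag the confusion (``as $\Ext^1(T,M)=0$ gives $\Ext^1(M,T)=0$? no''). Lemma \ref{Bongartz-generates} gives $\Ext^1(B,M)=0$, but that alone does not put $M$ into $\add(B)$: you still need $\Ext^1(M,M)=0$, $\Ext^1(M,T)=0$ and $\Ext^1(M,B)=0$ so that $T\oplus M\oplus B$ is rigid before Corollary \ref{lin-indept} applies. The missing ingredient is to apply $\Hom(-,T)$ to $\varepsilon$ and use that $\lambda_M^\ast\colon\Hom(T',T)\to\Hom(M,T)$ is onto (the approximation property) and $\Ext^1(T',T)=0$ to conclude $\Ext^1(M\oplus N,T)=0$; then $\Hom(-,M)$ (with $\Ext^1(T',M)=0$ from the hypothesis) gives $\Ext^1(M,M)=0$, and $\Hom(-,B)$ (with $\Ext^1(T',B)=0$ since $T\oplus B$ is tilting) gives $\Ext^1(M,B)=0$. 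With those in hand your rank count is fine, and part (3) goes through as you describe.
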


\begin{proof}
Consider the short exact sequence
\[ \varepsilon\colon 0\to M\xrightarrow{\lambda_M} T'\xrightarrow{\rho_N} N\to 0. \]
Applying $\Hom(T,-)$ gives that $\Ext^1(T,N)=0$ and that $\rho_{N\,\ast}\colon\Hom(T,T')\to\Hom(T,N)$ is onto, hence is a right $\add(T)$-approximation. Moreover, since $\lambda_M$ is minimal, $\rho_N$ is radical, so $N$ and $T$ are coprime. Dually, since $M$ and $T$ are coprime, $\lambda_M$ is radical, so $\rho_N$ is minimal.

Applying $\Hom(-,T)$ and using that $\lambda_M$ is a left $\add(T)$-approximation, we see that $\Ext^1(M\oplus N,T)=0$. 

Applying $\Hom(-,M)$ shows that $M$ is rigid, whence $T\oplus M$ is rigid. Let $B$ be the Bongartz complement to $T$. We know that $\Ext^1(B,M)=0$ by Lemma \ref{Bongartz-generates}, whereas applying $\Hom(-,B)$ gives $\Ext^1(M,B)=0$. Thus $M\oplus T\oplus B$ is rigid, so $M$ and $T$ coprime implies $M\in\add(B)$.

Furthermore, we have an exact sequence of left $\End(M)$-modules
\[ \Hom(T',M) \to \End(M) \to \Ext^1(N,M) \to 0. \]
Since $\lambda_M$ is a left $\add(T)$-approximation, all endomorphisms of $M$ factoring through $\add(T)$ factor through $\lambda_M$. Thus $\Ext^1(N,M)\cong\End(M)/\add(T)$.

The statements for $N$ are dual.

To obtain the isomorphism $\End(M)/\add(T)\cong\End(N)/\add(T)$, we send an endomorphism $\theta$ of $M$ to the unique (modulo $\add(T)$) endomorphism $\phi$ of $N$ such that $\theta\varepsilon=\varepsilon\phi$. In particular, $\End(M)/\mathrm{rad}\End(M)\cong\End(N)/\mathrm{rad}\End(N)$. So, if $M$ is indecomposable, then $\End(M)$ is a skew-field by Corollary \ref{mono-or-epi}, whence $N$ is indecomposable and $\End(M)\cong\End(N)$.
\end{proof}

\begin{Cor}\label{min-right-Bongartz}
Let $B$ be the Bongartz complement of $T$. Then the minimal right $\add(T)$-approximation $\rho_B$ of $B$ is injective.
\end{Cor}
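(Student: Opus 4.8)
The plan is to reduce the statement for the Bongartz complement $B$ to Theorem \ref{min-left-inj} by decomposing $B$ into indecomposable summands and analysing each one separately. So first I would write $B=\bigoplus_j B(j)$ as a direct sum of pairwise non-isomorphic indecomposables, noting that $B$ is coprime to $T$ by construction. Since $\rho_B=\bigoplus_j\rho_{B(j)}$ (the minimal right $\add(T)$-approximation of a direct sum is the direct sum of the minimal approximations), it suffices to show that each $\rho_{B(j)}$ is injective. Fix such a $B(j)$. We know $\Ext^1(T,B(j))=0$ since $T\oplus B$ is rigid, and $B(j)$ is coprime to $T$.

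The key step is to understand the minimal \emph{left} $\add(T)$-approximation $\lambda_{B(j)}\colon B(j)\to T'$. By Corollary \ref{min-left-mono-or-epi}, since $B(j)$ is indecomposable with $\Ext^1(T,B(j))=0$, this map is either a monomorphism or an epimorphism. The plan is to rule out the epimorphism case. If $\lambda_{B(j)}$ were an epimorphism, then one would dualise: by the dual of Theorem \ref{min-left-inj} (applied with the roles of left/right and mono/epi exchanged), $B(j)$ would be a summand of the dual Bongartz complement $C$ of $T$, and its kernel would be a summand of the Bongartz complement $B$. But more directly, an epimorphism $\lambda_{B(j)}\colon B(j)\twoheadrightarrow T'$ with $T'\in\add(T)$ that is also a left approximation forces, via $\Hom(-,T)$, a certain splitting; in particular one can derive $\Ext^1(T,\Ker\lambda_{B(j)})=0$ and that $B(j)$ generates no new extensions — and then the existence of an $\add(T)$-coresolution of $\Lambda$ together with $\lambda_{B(j)}$ being radical (as $B(j)$, $T$ are coprime) leads to a contradiction with the minimality/coprimality built into the Bongartz complement. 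Alternatively, and more cleanly: Lemma \ref{Bongartz-generates} says $T\oplus B$ generates every module $M$ with $\Ext^1(T,M)=0$, so in particular it generates $T'$; combined with the fact that $B$ is, by the dual construction, \emph{cogenerated} in the appropriate sense, one pins down that $\lambda_{B(j)}$ cannot be epi.

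Once $\lambda_{B(j)}$ is known to be a monomorphism, I apply Theorem \ref{min-left-inj} directly with $M=B(j)$: hypothesis (the minimal left $\add(T)$-approximation is a monomorphism, and $M$ coprime to $T$ with $\Ext^1(T,M)=0$) is exactly met, so part (1) of that theorem gives that the cokernel $\rho_N\colon T'\to N$ is a minimal right $\add(T)$-approximation of $N$, and part (2) tells us $N$ is a summand of the dual Bongartz complement. But I actually want the statement about $\rho_{B(j)}$ itself, i.e. that the minimal right $\add(T)$-approximation of $B(j)$ is injective. Here I would invoke that $B(j)$ is a summand of the Bongartz complement, hence (by Theorem \ref{min-left-inj}(2) applied in the converse direction, or by the symmetry of the setup) equals some $N'$ arising as the cokernel of the minimal left $\add(T)$-approximation of a summand $M'$ of the dual Bongartz complement — and then part (1) for that $M'$, stated dually, says precisely that $\rho_{B(j)}=\rho_{N'}$ is injective.

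The main obstacle I anticipate is cleanly executing the case analysis: showing $\lambda_{B(j)}$ is a monomorphism rather than circular reasoning, since the Bongartz complement is defined via a universal \emph{coextension} while the statement concerns right approximations. I expect the cleanest route is to show that for any summand $M'$ of the \emph{dual} Bongartz complement $C$, the minimal left $\add(T)$-approximation $\lambda_{M'}$ is a monomorphism (this is the dual of the already-available Corollary, combined with the coprimality of $C$ and $T$ forcing the non-epi case), apply Theorem \ref{min-left-inj} to get that the cokernels $N'$ are summands of the Bongartz complement $B$ with $\rho_{N'}$ injective, and finally argue that \emph{every} indecomposable summand of $B$ arises this way — a dimension-count or rank-count using Corollary \ref{lin-indept} and the fact that $T\oplus B$ and $T\oplus C$ are both tilting (hence both of rank $n$) should close the gap.
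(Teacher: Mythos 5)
Your central plan is to show that the minimal left $\add(T)$-approximation $\lambda_{B(j)}$ is always a monomorphism and thereby ``rule out the epimorphism case''; but that case is not spurious. Immediately after this corollary the paper decomposes $B=B_1\oplus B_2$ into exactly the summands for which $\lambda$ is a monomorphism ($B_1$) and those for which it is an epimorphism ($B_2$), and Proposition~\ref{B_1,B_2 Prop} shows $B_2\neq 0$ whenever $T$ is insincere (it sits in $0\to P_T\to B_2\to{}_{B_2}T\to 0$). So the dichotomy you are trying to collapse is essential structure, not an obstruction to be removed. You also write that ``the dual of the already-available Corollary'' shows $\lambda_{M'}$ is a monomorphism for $M'$ a summand of the dual Bongartz complement $C$ --- but the dual of the very corollary you are proving asserts the opposite, namely that $\lambda_C$ is \emph{surjective}. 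Both slips point to a single confusion: the statement is about the \emph{right} approximation $\rho_B$, and pivoting to $\lambda_B$ together with a hoped-for $B$--$C$ bijection cannot close (the bijection you want, Corollary~\ref{B-C-bijection}, is proved only after the present corollary and only matches summands of $B_1$ with those of $C_1$, not all of $B$ with all of $C$).

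The intended argument works directly with $\rho_N$. Let $N$ be an indecomposable summand of $B$; it is coprime to $T$ and $\Ext^1(N,T)=0$, so by the dual of Corollary~\ref{min-left-mono-or-epi} the minimal right $\add(T)$-approximation $\rho_N\colon T'\to N$ is either a monomorphism or an epimorphism. If it were an epimorphism, the dual of Theorem~\ref{min-left-inj} would place its kernel $M$ among the indecomposable summands of $B$; but then $\Ext^1(N,M)=0$ since $T\oplus B$ is rigid, the sequence $0\to M\to T'\to N\to 0$ splits, and $N$ becomes a direct summand of $T'\in\add(T)$, contradicting that $N$ is coprime to $T$. No rank count, no appeal to $C$, and no control over $\lambda_B$ is needed.
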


\begin{proof}
Let $N$ be an indecomposable summand of $B$. If $\rho_N$ were surjective, then by the dual of Theorem \ref{min-left-inj}, its kernel $M$ would also be an indecomposable summand of $B$, so $\Ext^1(N,M)=0$, a contradiction.
\end{proof}

We can decompose the Bongartz complement of $T$ as $B=B_1\oplus B_2$ such that the minimal left $\add(T)$-approximation $\lambda_{B_1}\colon B_1\to {}_{B_1}T$ of $B_1$ is a monomorphism, and the minimal left $\add(T)$-approximation $\lambda_{B_2}\colon B_2\to {}_{B_2}T$ of $B_2$ is an epimorphism.

Recall that $\sigma_T=\{i:\Hom(P(i),T)=0\}$ and $P_T:=\bigoplus_{i\in\sigma_T}P(i)$.

\begin{Prop}\label{B_1,B_2 Prop}
\begin{enumerate}
\item $B_1$ is the relative Bongartz complement of $T$; that is, $B_1$ is the Bongartz complement of $T$ inside $\modcat\Lambda_T$.
\item We have a short exact sequence
\[ 0 \longrightarrow P_T \longrightarrow B_2\xrightarrow{\lambda_{B_2}} {}_{B_2}T \longrightarrow 0. \]
Moreover, $\Hom({}_{B_2}T,B_2)=0$ and $\End(B_2)\cong\End(P_T)$.
\end{enumerate}
\end{Prop}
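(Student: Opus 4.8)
The plan is to exploit the decomposition $B = B_1 \oplus B_2$ according to whether the minimal left $\add(T)$-approximation is a monomorphism or an epimorphism, and to identify each piece separately. For part (1), first I would observe that $B_1$, being a summand of $B$, satisfies $\Ext^1(T \oplus B_1, T \oplus B_1) = 0$, so $T \oplus B_1$ is rigid; moreover, since $\lambda_{B_1}$ is a monomorphism and (by minimality, using the Corollary after Proposition \ref{universal-extension}) $\lambda_{B_1}$ is radical, $B_1$ and $T$ are coprime. The cokernel sequence $0 \to B_1 \to {}_{B_1}T \to N_1 \to 0$ shows, via $\Hom(T,-)$, that $T \oplus B_1$ has an $\add(T \oplus B_1)$-coresolution issue reduced to covering the projectives of $\modcat\Lambda_T$. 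The key point is that $B_1$ lives in $\modcat\Lambda_T$: this is where one must argue that a monomorphic approximation forces $\mathrm{supp}\,B_1 \subseteq \mathrm{supp}\,T$, so that $B_1 e_i = 0$ whenever $T e_i = 0$ — indeed if $B_1$ had an indecomposable summand supported outside $\mathrm{supp}\,T$, a nonzero map into ${}_{B_1}T \in \add(T)$ could not be injective. Then one checks that $T \oplus B_1$ restricted to $\modcat\Lambda_T$ is a tilting $\Lambda_T$-module (it is rigid of the correct rank by Proposition \ref{r=n} applied inside $\modcat\Lambda_T$, using Corollary \ref{lin-indept}), and that $B_1$ is coprime to $T$ and a summand of the relative Bongartz complement's defining extension — and conversely the relative Bongartz complement, viewed in $\modcat\Lambda$, is rigid together with $T$ and coprime to $T$, hence a summand of $B$; matching up the monomorphic parts (the relative Bongartz complement has monomorphic approximation since there are "enough" projectives in $\modcat\Lambda_T$ — cf. Corollary \ref{min-right-Bongartz} dualised) gives $B_1 = $ relative Bongartz complement.

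For part (2), I would start from the defining universal coextension $\eta_\Lambda\colon 0 \to \Lambda \to E_\Lambda \to T'_\Lambda \to 0$ used to build the Bongartz complement, and decompose $\Lambda = \Lambda e_{\sigma_T} \oplus \Lambda(1 - e_{\sigma_T}) = P_T \oplus \Lambda'$ where $\Lambda' = \bigoplus_{i \notin \sigma_T} P(i)$. Since the indecomposable projectives $P(i)$ with $i \notin \sigma_T$ already admit nonzero maps to $T$, one expects their contribution to the coextension to feed into the $B_1$ (monomorphic) part or cancel, while the $P(i)$ with $i \in \sigma_T$ — which have $\Hom(P(i),T)=0$ — contribute purely to $B_2$. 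Concretely, I would show that the minimal universal $\add(T)$-coextension of $P_T$ is $0 \to P_T \to B_2 \to {}_{B_2}T \to 0$: applying $\Hom(-,T)$ and using projectivity of $P_T$ gives that $B_2 \in \add(B)$ is rigid with $T$, coprime to $T$ (the map $P_T \to B_2$ is radical by minimality), and the left approximation of $B_2$ is the composite $B_2 \to {}_{B_2}T$, which is epimorphic exactly because $P_T \to B_2$ is injective. The identity $\Hom({}_{B_2}T, B_2) = 0$ follows because a nonzero such map composed with $B_2 \twoheadrightarrow {}_{B_2}T$ would give an endomorphism of ${}_{B_2}T \in \add(T)$ contradicting that the quiver of $\End(T \oplus B_2)$ has no oriented cycles (Corollary \ref{mono-or-epi}), together with coprimality. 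Finally $\End(B_2) \cong \End(P_T)$ comes from applying $\Hom(-, B_2)$ to the sequence: $\Hom({}_{B_2}T, B_2) = 0$ kills one end and $\Ext^1(T, B_2) = 0$ (rigidity) together with $\Ext^1({}_{B_2}T, -)$-terms show $\Hom(P_T, B_2) \cong \End(B_2)$, while $\Hom(-, P_T)$ applied to the sequence, using $\Hom(T, P_T) = 0$ (as $P_T$ has no composition factors $S(i)$, $i \notin \sigma_T$... more precisely $\Hom({}_{B_2}T, P_T)$ vanishes since ${}_{B_2}T$ is generated by $T$ and any map would have image a quotient of $T$ landing in the projective $P_T$ whose top involves only $S(i)$, $i \in \sigma_T$), identifies $\End(P_T) \cong \Hom(P_T, B_2)$ as well, and one checks these are inverse ring isomorphisms via the sequence.

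The main obstacle I anticipate is part (2), specifically pinning down that the coextension of $P_T$ really does produce all of $B_2$ and nothing more — i.e. that $B_2$ as defined via the mono/epi dichotomy coincides with this relative-to-$P_T$ construction. One has to argue both inclusions: that $B_2 \in \add(E_{P_T})$ (each indecomposable summand of $B$ with epimorphic approximation is accounted for by the $P_T$-coextension and not by extra pieces coming from the $\Lambda'$-part of $E_\Lambda$) and conversely that $E_{P_T}$ modulo $\add(T)$ is exactly $B_2$, not a proper summand. I expect this to require carefully combining the uniqueness statement in Proposition \ref{universal-extension}, Corollary \ref{ext-corollary} (that universal extensions agree modulo $\add(T)$), and a dimension-vector count via Corollary \ref{lin-indept} showing $\rank B_2 = |\sigma_T|$, matching $\rank P_T$. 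The identities $\Hom({}_{B_2}T, B_2) = 0$ and $\End(B_2) \cong \End(P_T)$ are then comparatively routine diagram chases once the exact sequence is established.
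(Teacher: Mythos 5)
Your overall intuitions are correct and several ingredients match the paper, but the logical organization has genuine gaps, especially in part (2), which you yourself flag as the ``main obstacle'' without resolving it.

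The paper's argument runs in the opposite direction to yours and this matters. Instead of taking the universal coextension of $P_T$ and trying to show its cokernel is $B_2$, the paper starts from $B_2$: it sets $K := \Ker(\lambda_{B_2})$, observes that the composite $B_2 \hookrightarrow E_\Lambda \to T'_\Lambda$ factors through $\lambda_{B_2}$ and hence induces a monomorphism $K \hookrightarrow \Lambda$, so $K$ is projective; then $\Hom(K,T)=0$ puts $K \in \add(P_T)$. The crucial step you are missing is the observation that applying $\Hom(-,B_2)$ and $\Hom(K,-)$ to the sequence (once $\Hom({}_{B_2}T,B_2)=0$ is known) gives $\End(K)\cong\Hom(K,B_2)\cong\End(B_2)$, and since $B_2$ is basic, this \emph{forces $K$ to be basic}. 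That is what upgrades $K\in\add(P_T)$ to $K$ a direct summand of $P_T$, and gives $\rank B_2=\rank K\leq|\sigma_T|$. Together with $\rank(T\oplus B_1)\leq n-|\sigma_T|$ (since it is $\Lambda_T$-rigid) and $\rank(T\oplus B)=n$, both inequalities become equalities, delivering simultaneously $K\cong P_T$ (part 2) and the rank hypothesis needed to invoke Proposition \ref{r=n} for part (1). Your proposal invokes the rank count in part (1) but defers establishing it to part (2), where your route (constructing $B_2$ from the coextension of $P_T$, then appealing to uniqueness modulo $\add(T)$) does not obviously close: you would still need to rule out that the minimal coextension of $P_T$ is strictly smaller than $B_2$, and for that you need precisely the basic-ness of the kernel, i.e.\ the $\End$-isomorphism. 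Your ``dimension-vector count via Corollary \ref{lin-indept}'' is not enough on its own; linear independence of dimension vectors bounds the number of summands but does not pin down which summands of $B$ land in $B_2$.

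A secondary issue: your argument for $\Hom({}_{B_2}T,B_2)=0$ (``a nonzero map composed with $\lambda_{B_2}$ would give an endomorphism of ${}_{B_2}T\in\add(T)$, contradicting no oriented cycles'') does not work as stated, since an endomorphism of a module in $\add(T)$ is not of itself forbidden, and the composite $\lambda_{B_2}\circ g$ could be zero. The paper's argument is different and tighter: if $g:Y\to X'$ is nonzero with $Y\in\add({}_{B_2}T)$ and $X'$ indecomposable in $\add(B_2)$, then $g$ is \emph{monic} because it factors through $\rho_{X'}$, which is monic by Corollary \ref{min-right-Bongartz}; and any nonzero $X\to Y$ with $X$ a summand of $B_2$ is \emph{epic} because it factors through $\lambda_X$. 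The composite $X\twoheadrightarrow Y\hookrightarrow X'$ is then neither mono nor epi (coprimality rules out either factor being an isomorphism), contradicting Corollary \ref{mono-or-epi}. You should use this mono/epi dichotomy argument rather than an oriented-cycle heuristic.

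Finally, for part (1) the paper does not need the elaborate ``matching of monomorphic parts'' you propose. Once $T\oplus B_1$ is known to be tilting in $\modcat\Lambda_T$ (from the rank count), one simply takes the relative Bongartz complement $B'$, notes that $\Ext^1(B',B_1)=0$ and $\Ext^1(B_1,B')=0$ by Lemma \ref{Bongartz-generates} applied in $\modcat\Lambda_T$ and in $\modcat\Lambda$ respectively, so $T\oplus B_1\oplus B'$ is rigid in $\modcat\Lambda_T$, which forces $B_1\cong B'$ since both basic tilting modules already have full rank.
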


\begin{proof}
From the construction of the Bongartz complement, we have the universal $\add(T)$-coextension
\[ \eta_\Lambda\colon 0 \to \Lambda \to E_\Lambda \to T'_\Lambda \to 0. \]
Consider the short exact sequence
\[ 0 \longrightarrow K \longrightarrow B_2 \xrightarrow{\lambda_{B_2}} {}_{B_2}T \longrightarrow 0. \]
Since $B_2$ is a summand of $E_\Lambda$, we have the composition $B_2\to E_\Lambda\to T'_\Lambda$. This factors through the left $\add(T)$-approximation $\lambda_{B_2}$, and hence induces a monomorphism $K\to\Lambda$, so $K$ is projective. Applying $\Hom(-,T)$ shows that $\Hom(K,T)=0$, so $K\in\add(P_T)$.

We next observe that $\Hom({}_{B_2}T,B_2)=0$. For, if $X$ is an indecomposable summand of $B_2$, then $\lambda_X$ is an epimorphism by assumption, and $\rho_X$ is a monomorphism by Corollary \ref{min-right-Bongartz}. Let $Y$ be an indecomposable summand of ${}_{B_2}T$. Then there exists a non-zero map $X\to Y$ for some $X$, and this is onto since it factors through $\lambda_X$. If we have a non-zero map $Y\to X'$, then this will be injective since it factors through $\rho_{X'}$, but then $X\twoheadrightarrow Y\hookrightarrow X'$ will be neither a monomorphism nor an epimorphism, contradicting Corollary \ref{mono-or-epi}.

Applying $\Hom(-,B_2)$ now yields that $\Hom(K,B_2)\cong\End(B_2)$, and applying $\Hom(K,-)$ shows that $\Hom(K,B_2)\cong\End(K)$. Hence $\End(K)\cong\End(B_2)$. Since $B_2$ is basic, so too is $K$. Therefore $K$ is a direct summand of $P_T$ and $B_2$ has rank at most $|\sigma_T|$.

On the other hand, $T\oplus B_1$ is a basic rigid $\Lambda_T$-module, so has rank at most $n-|\sigma_T|$. Since $T\oplus B_1\oplus B_2$ is tilting, it has rank $n$, so we must have equalities above. It follows that $K\cong P_T$ and that $T\oplus B_1$ is a basic tilting $\Lambda_T$-module.

Let $B'$ be the Bongartz complement relative to $\Lambda_T$. Since $T\oplus B_1$ is a rigid $\Lambda_T$-module, we have $\Ext^1(B',B_1)=0$ by Lemma \ref{Bongartz-generates}. Similarly, since $T\oplus B'$ is a rigid $\Lambda$-module, we have $\Ext^1(B_1,B')=0$. Thus $T\oplus B_1\oplus B'$ is a rigid $\Lambda_T$-module, whence $B_1\cong B'$.
\end{proof}

Let $C$ be the dual Bongartz complement of $T$. Then we can write $C=C_1\oplus C_2$ such that $C_1$ is the relative dual Bongartz complement.

\begin{Cor}\label{B-C-bijection}
There are short exact sequences
\[ 0\to P(i)\to B(i)\xrightarrow{\lambda_{B(i)}} {}_{B(i)}T\to 0 \quad\textrm{and}\quad
0\to T_{C(i)}\xrightarrow{\rho_{C(i)}} C(i)\to I(i)\to 0 \]
yielding bijections between the indecomposable summands of $B_2$, the indecomposable summands of $C_2$, and the vertices in $\sigma_T$.

Similarly, there are short exact sequences
\[ 0\longrightarrow B(j)\xrightarrow{\lambda_{B(j)}} T_j\xrightarrow{\rho_{C(j)}} C(j)\longrightarrow 0\]
yielding bijections between the indecomposable summands of $B_1$ and those of $C_1$.
\end{Cor}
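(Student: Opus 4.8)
The plan is to treat the two families of short-exact sequences in parallel, starting from the material already assembled in Proposition \ref{B_1,B_2 Prop} and Theorem \ref{min-left-inj}. For the first two sequences: by definition $B_2$ is the summand of the Bongartz complement whose minimal left $\add(T)$-approximation is an epimorphism, and dually $C_2$ is the summand of the dual Bongartz complement whose minimal right $\add(T)$-approximation is a monomorphism. Proposition \ref{B_1,B_2 Prop}(2) gives the short-exact sequence $0\to P_T\to B_2\xrightarrow{\lambda_{B_2}}{}_{B_2}T\to 0$, and I would first note that this sequence splits into a direct sum of sequences indexed by the indecomposable summands $B(i)$ of $B_2$: since $P_T=\bigoplus_{i\in\sigma_T}P(i)$ and the $B(i)$ are pairwise non-isomorphic with $\lambda_{B(i)}$ radical epimorphisms, the kernel of $\lambda_{B(i)}$ is an indecomposable projective (being a submodule of a projective, as $\Lambda$ is hereditary), and comparing with $\End(B_2)\cong\End(P_T)$ from Proposition \ref{B_1,B_2 Prop} forces each such kernel to be a distinct $P(i)$, $i\in\sigma_T$. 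This simultaneously produces the first sequence and the bijection between the indecomposable summands of $B_2$ and the vertices of $\sigma_T$. The second sequence and its bijection are obtained by the dual argument applied to $C_2$ and $I_T$ inside $\modcat\Lambda$ (dualising via $D$).

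To link $B_2$ with $C_2$ I would apply Theorem \ref{min-left-inj} — or rather its dual — to each $N:=B(i)$. Concretely: starting from $B(i)$, whose minimal left $\add(T)$-approximation is an epimorphism, I take instead the minimal right $\add(T)$-approximation of the cokernel, or better, I run Theorem \ref{min-left-inj} backwards. The cleanest route is: take the sequence $0\to P(i)\to B(i)\to{}_{B(i)}T\to 0$; this exhibits $P(i)$ as a module coprime to $T$ (since $i\in\sigma_T$) whose minimal left $\add(T)$-approximation $P(i)\to{}_{B(i)}T$ is a monomorphism — wait, that is not quite the approximation of $P(i)$. Instead I would observe directly that the construction of Theorem \ref{min-left-inj}, applied with the roles suitably chosen, sends the pair (kernel $=P(i)$, quotient $=$ something in $\add T$) to a dual pair whose relevant term is an indecomposable summand $C(i)$ of the dual Bongartz complement. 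The key input is the isomorphism $\End(B(i))\cong\End(P(i))\cong\End(S(i))\cong\End(I(i))\cong\End(C(i))$ together with the established bijections of both $B_2$ and $C_2$ with $\sigma_T$; matching them up through the common index set $\sigma_T$ gives the pairing of summands, and the sequence $0\to T_{C(i)}\to C(i)\to I(i)\to 0$ is then just Proposition \ref{B_1,B_2 Prop}(2) dualised and decomposed.

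For the last assertion — the sequences $0\to B(j)\xrightarrow{\lambda_{B(j)}}T_j\xrightarrow{\rho_{C(j)}}C(j)\to 0$ and the bijection between summands of $B_1$ and of $C_1$ — I would apply Theorem \ref{min-left-inj} directly to each indecomposable summand $M:=B(j)$ of $B_1$: by the decomposition $B=B_1\oplus B_2$, the minimal left $\add(T)$-approximation $\lambda_{B(j)}$ is a monomorphism, and $B(j)$ is coprime to $T$, so Theorem \ref{min-left-inj} applies and produces the exact sequence $0\to B(j)\to T_j\to N\to 0$ with $T_j\in\add(T)$, $N$ the minimal right $\add(T)$-approximation's source, $N$ indecomposable, coprime to $T$, rigid, a summand of the dual Bongartz complement, and with $\End(B(j))\cong\End(N)$. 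It remains to check $N$ lies in $C_1$ rather than $C_2$: this follows because summands of $C_2$ have minimal right $\add(T)$-approximation a monomorphism (by the dual of the $B_2$ analysis), whereas here $\rho_N$ is the epimorphism $T_j\to N$; so $N=C(j)$ for a unique $j$, and by symmetry (running the argument from $C_1$) the correspondence is a bijection. The main obstacle I anticipate is purely bookkeeping: making sure the index $j$ attached to $C(j)$ really is forced to match the index of $B(j)$ — i.e. that the map $B_1\to C_1$ just constructed is the \emph{inverse} of the one obtained by starting from $C_1$ — which should reduce to the fact that in Theorem \ref{min-left-inj} the passage $M\mapsto N$ and its dual $N\mapsto M$ are mutually inverse, visible from the single short exact sequence $\varepsilon$ that witnesses both.
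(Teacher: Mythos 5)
Your proposal is correct and follows the paper's (extremely terse) proof: the first family of sequences and the three-way bijection are read off from Proposition~\ref{B_1,B_2 Prop} and its dual by decomposing the sequence $0\to P_T\to B_2\to{}_{B_2}T\to0$ over the indecomposable summands of $B_2$ and counting, while the second family comes from applying Theorem~\ref{min-left-inj} to each $B(j)\in\add(B_1)$ and noting the resulting cokernel's minimal right $\add(T)$-approximation is an epimorphism, hence lands in $\add(C_1)$. The detour in your middle paragraph (the chain of endomorphism-ring isomorphisms and the aborted attempt to run Theorem~\ref{min-left-inj} on $P(i)$) is unnecessary — the bijection between summands of $B_2$ and of $C_2$ is simply the composite of the two bijections with $\sigma_T$, as you in fact conclude.
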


\begin{proof}
The first statement follows from Proposition \ref{B_1,B_2 Prop} and its dual, the second by Theorem \ref{min-left-inj}.
\end{proof}

\section{Perpendicular Categories}

Again let $T$ be rigid. Following Geigle and Lenzing \cite{GL}, we define the left perpendicular category to be
\[ {}^\perp T := \{M:\Hom(M,T)=0=\Ext^1(M,T)\}. \]
It is clear that this is an exact abelian subcategory which is closed under extensions, and hence hereditary.

The next theorem generalises a result of Ringel \cite{Ringel1}, who proved it for exceptional modules.

\begin{Thm}\label{perp-thm}
There is an equivalence of categories
\[ \{M:\Ext^1(T,M)=0, \lambda_M\textrm{ epi}\}/\add(T) \overset{F}{\underset{G}{\rightleftarrows}} \{X:\Hom(X,T)=0\} \]
such that $F(M):=\Ker(\lambda_M)$ and
\[ \eta_X\colon0\longrightarrow X\xrightarrow{c_X} G(X) \xrightarrow{d_X} T'_X \longrightarrow 0 \]
is the (minimal) universal $\add(T)$-coextension of $X$.

This induces an equivalence of categories
\[ \{M:\Ext^1(T,M)=0=\Ext^1(M,T), \lambda_M\textrm{ epi}\}/\add(T) \cong {}^\perp T. \]
\end{Thm}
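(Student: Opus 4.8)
The plan is to build the equivalence in two stages: first establish the bijection between the two module classes via $F$ and $G$, then cut down to the perpendicular category by imposing the vanishing of $\Ext^1(M,T)$ on one side and checking it matches $\Hom(X,T)=0$ combined with $\Ext^1(X,T)=0$ on the other. For the first stage, I would take a module $M$ with $\Ext^1(T,M)=0$ and $\lambda_M$ an epimorphism, and consider the short exact sequence $0\to F(M)\to M\xrightarrow{\lambda_M}{}_MT\to 0$ with $F(M):=\Ker(\lambda_M)$. Applying $\Hom(-,T)$ and using that $\lambda_M$ is a left $\add(T)$-approximation (so $\Hom({}_MT,T)\to\Hom(M,T)$ is onto) immediately gives $\Hom(F(M),T)=0$, so $F$ does land in $\{X:\Hom(X,T)=0\}$. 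Conversely, given $X$ with $\Hom(X,T)=0$, form the minimal universal $\add(T)$-coextension $\eta_X\colon 0\to X\xrightarrow{c_X}G(X)\xrightarrow{d_X}T'_X\to 0$ from Proposition~\ref{universal-extension}; by the universal property $\Ext^1(T,G(X))\cong\Ext^1(T,T'_X)=0$, and I must check that $\lambda_{G(X)}$ is an epimorphism with kernel $X$ — the natural candidate for $\lambda_{G(X)}$ being $d_X\colon G(X)\to T'_X$. This requires showing $d_X$ is actually the \emph{minimal left} $\add(T)$-approximation of $G(X)$: surjectivity of $d_X^\ast\colon\Hom(T'_X,T)\to\Hom(G(X),T)$ follows because any map $G(X)\to T$ kills $X$ (as $\Hom(X,T)=0$) hence factors through $d_X$; minimality follows from minimality of $\eta_X$ via the Corollary to Proposition~\ref{universal-extension}.

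Next I would verify that $F$ and $G$ are mutually inverse \emph{modulo} $\add(T)$. Starting from $X$, we have $F(G(X))=\Ker(d_X)=X$ on the nose. Starting from $M$, the sequence $0\to F(M)\to M\to {}_MT\to 0$ should be a universal $\add(T)$-coextension of $F(M)$: one checks $\Ext^1(T,-)$-vanishing and the universal property using that $\lambda_M$ is the minimal left approximation, and then invokes uniqueness of minimal universal coextensions (Proposition~\ref{universal-extension}, dualized) together with Corollary~\ref{ext-corollary} to conclude $M\cong G(F(M))$ modulo $\add(T)$. Functoriality of $F$ and $G$ on the quotient categories is routine: a morphism $M\to M'$ restricts to $F(M)\to F(M')$, well-defined modulo maps factoring through $\add(T)$ because such maps restrict to zero on kernels after factoring through the approximation; similarly a morphism $X\to X'$ extends (uniquely modulo $\add(T)$) to $G(X)\to G(X')$ by the universal property of the coextension. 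This gives the first displayed equivalence.

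For the second displayed equivalence, I restrict $F$ to those $M$ additionally satisfying $\Ext^1(M,T)=0$ and claim the image is exactly ${}^\perp T=\{X:\Hom(X,T)=0=\Ext^1(X,T)\}$. One direction: if $\Ext^1(M,T)=0$, then applying $\Hom(-,T)$ to $0\to F(M)\to M\to {}_MT\to 0$ and using $\Ext^1({}_MT,T)=0$ gives $\Ext^1(F(M),T)=0$, so $F(M)\in{}^\perp T$. The other direction: if $X\in{}^\perp T$, then applying $\Hom(-,T)$ to $\eta_X$ and using $\Ext^1(T'_X,T)=0$ together with $\Hom(X,T)=0=\Ext^1(X,T)$ forces $\Ext^1(G(X),T)=0$, so $G(X)$ satisfies the extra hypothesis. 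Since $F$ and $G$ are already inverse equivalences on the larger categories, restricting to these matching full subcategories yields the equivalence $\{M:\Ext^1(T,M)=0=\Ext^1(M,T),\ \lambda_M\text{ epi}\}/\add(T)\cong{}^\perp T$.

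The main obstacle I anticipate is the careful bookkeeping for well-definedness on the quotient category $/\add(T)$ — specifically, checking that the extension $G(X)\to G(X')$ of a morphism $X\to X'$ is genuinely unique modulo maps factoring through $\add(T)$, which is where the universal (rather than merely approximating) property of the coextension is essential, and dually that $F$ sends maps factoring through $\add(T)$ to zero. The identification of $d_X$ with the \emph{minimal} left $\add(T)$-approximation of $G(X)$ (not just an approximation) is the other delicate point, and it is exactly here that minimality of the universal coextension, guaranteed by Proposition~\ref{universal-extension}, does the work; without it one would only get an equivalence after further quotienting. Everything else is a diagram chase using that $\Lambda$ is hereditary so that $\Ext^1(-,-)$ is right exact in each variable.
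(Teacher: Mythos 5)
Your proposal follows essentially the same route as the paper: you show $F(M)=\Ker(\lambda_M)$ lands in $\{X:\Hom(X,T)=0\}$, identify $d_X$ as the minimal left $\add(T)$-approximation of $G(X)$ so that $G$ lands back in the source, check $FG\cong\mathrm{id}$ on the nose and $GF\cong\mathrm{id}$ modulo $\add(T)$ via uniqueness of minimal universal coextensions (Corollary~\ref{ext-corollary}), and transfer the $\Ext^1(-,T)$-vanishing across the two short exact sequences for the final statement. The paper condenses the well-definedness and functoriality discussion you flag as the delicate point by directly producing the isomorphisms $\Hom(F(M),F(N))\cong\Hom(M,N)/\add(T)$ and $\Hom(G(X),G(Y))/\add(T)\cong\Hom(X,Y)$ (applying $\Hom(F(M),-)$ and $\Hom(-,N)$ to the defining sequences), giving fully-faithfulness in one stroke, but this is a presentational difference rather than a different argument.
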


\begin{proof}
Consider first the functor $F$. Since $\lambda_M$ is a left $\add(T)$-approximation and $T$ is rigid, we see that $\Hom(F(M),T)=0$. Applying $\Hom(F(M),-)$ to $\lambda_N$ and $\Hom(-,N)$ to $\lambda_M$ shows that
\[ \Hom(F(M),F(N)) \cong \Hom(F(M),N) \cong \Hom(M,N)/\add(T). \]
Thus $F$ defines a fully faithful functor.

Next consider the functor $G$. Since $\Hom(X,T)=0$ we see that $d_X$ is a left $\add(T)$-approximation of $G(X)$ (and minimal since it is surjective). Also, since $\eta_X$ is universal and $T$ is rigid, we must have $\Ext^1(T,G(X))=0$. Applying $\Hom(-,G(Y))$ to $\eta_X$, and $\Hom(X,-)$ to $\eta_Y$, shows that
\[ \Hom(G(X),G(Y))/\add(T) \cong \Hom(X,G(Y)) \cong \Hom(X,Y). \]
Thus $G$ also defines a fully faithful functor.

Since $d_X$ is a minimal left $\add(T)$-approximation of $G(X)$, we have $FG(X)\cong X$. Similarly, since $\Ext^1(T,M)=0$, the sequence
\[ 0 \longrightarrow F(M) \longrightarrow M \overset{\lambda_M}{\longrightarrow} {}_MT \longrightarrow 0 \]
is a universal $\add(T)$-coextension of $F(M)$. Thus $GF(M)\cong M$ modulo $\add(T)$ by Corollary \ref{ext-corollary}. This proves that $F$ and $G$ are inverse equivalences.

The final statement follows by noting that $\Ext^1(M,T)\cong\Ext^1(F(M),T)$ and $\Ext^1(X,T)\cong\Ext^1(G(X),T)$.
\end{proof}

Recall that $B=B_1\oplus B_2$, where $B_1$ is the relative Bongartz complement of $T$. Also, by Corollary \ref{min-right-Bongartz}, the minimal right $\add(T)$-approximation of $B$ is a monomorphism. We let $\overline B=\overline B_1\oplus\overline B_2$ denote the cokernel of this minimal right $\add(T)$-approximation, so we have a short exact sequence
\[ 0\longrightarrow T_B\xrightarrow{\rho_B} B\longrightarrow\overline B\longrightarrow 0. \]
We make the dual definitions for $C$.

\begin{Thm}\label{proj-gen-inj-cogen}
The right perpendicular category $T^\perp$ has projective generator $\overline B$ and injective cogenerator $\tau\overline C_1\oplus I_T$. In particular, $\End(\overline B)\cong\End(\tau\overline C_1\oplus I_T)$ is a basic hereditary artin $R$-algebra.
\end{Thm}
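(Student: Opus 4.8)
The plan is to show separately that $\overline B$ is a projective generator of $T^\perp$ and that $\tau\overline C_1\oplus I_T$ is an injective cogenerator, and then to read off the statement on endomorphism rings. That $\overline B\in T^\perp$ comes from applying $\Hom(T,-)$ to the defining sequence $0\to T_B\xrightarrow{\rho_B}B\to\overline B\to 0$: since $\rho_B$ is a right $\add(T)$-approximation, $\Hom(T,T_B)\to\Hom(T,B)$ is onto, so $\Hom(T,\overline B)$ embeds in $\Ext^1(T,T_B)=0$, while $\Ext^1(T,\overline B)=0$ follows from $\Ext^1(T,T_B)=0=\Ext^1(T,B)$, the latter since $T\oplus B$ is tilting. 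For any $X\in T^\perp$ we have $\Hom(T_B,X)=0=\Ext^1(T_B,X)$ (as $T_B\in\add(T)$); by Lemma \ref{Bongartz-generates}, $T\oplus B$ generates $X$, hence so does $B$ (the $T$-component of a surjection onto $X$ vanishes), giving $B^k\twoheadrightarrow X$, which kills $T_B^k$ and so factors through $\overline B^k\twoheadrightarrow X$; and $\Hom(-,X)$ applied to the defining sequence gives $\Ext^1(\overline B,X)\cong\Ext^1(B,X)=0$. As $T^\perp$ is closed under extensions, $\overline B$ is projective in $T^\perp$, and it is basic because $B$ is basic and coprime to $T$.

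Hence $T^\perp$ --- an $R$-linear, finite-length, hereditary abelian category with a basic projective generator --- is equivalent to $\modcat\Gamma$ for the basic hereditary artin $R$-algebra $\Gamma:=\End_\Lambda(\overline B)$, with $\overline B$ corresponding to $\Gamma$; its injective cogenerator $J$ then corresponds to $D\Gamma$, so $\End_\Lambda(J)\cong\End_\Gamma(D\Gamma)\cong\Gamma\cong\End_\Lambda(\overline B)$, giving the final assertion once $J$ is identified. Dually to the first paragraph --- using the universal $\add(T)$-extension of $D\Lambda$, the dual of Lemma \ref{Bongartz-generates}, and Corollary \ref{min-right-Bongartz} --- one obtains that ${}^\perp T$ has injective cogenerator $\overline C$; in particular $\overline C_1$ is a summand of $\overline C\in{}^\perp T$, and since it is the kernel of a map inside $\modcat\Lambda_T$ it lies in ${}^\perp T\cap\modcat\Lambda_T$ and has no projective summands (such a summand $P(k)$ would force $k\in\sigma_T$ via $\Hom(\overline C_1,T)=0$, but then $P(k)e_k\ne0$ contradicts $\overline C_1\in\modcat\Lambda_T$).

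To identify $J$: for $i\in\sigma_T$, $\Hom(T,I(i))\cong D(Te_i)=0$, so each $I(i)\in T^\perp$ and, being injective over $\Lambda$, is injective in $T^\perp$. Next, the Auslander--Reiten formula gives $\Hom(T,\tau\overline C_1)\cong D\Ext^1(\overline C_1,T)=0$; and since $\underline\Hom(\overline C_1,T)=0$ forces any map $\tau\overline C_1\to\tau T$ through an injective $E$, while $\Hom(E,\tau T)\cong D\Ext^1(T,E)=0$, also $\Ext^1(T,\tau\overline C_1)\cong D\Hom(\tau\overline C_1,\tau T)=0$; hence $\tau\overline C_1\in T^\perp$. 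As $\overline C_1$ has no projective summands, $\tau^-\tau\overline C_1\cong\overline C_1$, so for $X\in T^\perp$ we get $\Ext^1(X,\tau\overline C_1)\cong D\Hom(\overline C_1,X)$; this vanishes because the short exact sequence $0\to B_1\to T'\to C_1\to 0$ of Corollary \ref{B-C-bijection} (with $T'\in\add(T)$) gives $\Hom(C_1,X)\hookrightarrow\Hom(T',X)=0$, whence $\Hom(\overline C_1,X)=0$ using $0\to\overline C_1\to C_1\to{}_{C_1}T\to 0$ and $X\in T^\perp$. Thus $\tau\overline C_1$ is injective in $T^\perp$; its indecomposable summands are pairwise non-isomorphic (as $\overline C_1$ is basic and coprime to $T$) and non-injective over $\Lambda$ (if $\tau M\cong I(i)$ then $M\cong\tau^-I(i)=0$), hence disjoint from those of $I_T$. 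Since $\rank(\tau\overline C_1\oplus I_T)=\rank C_1+|\sigma_T|=(\rank\Lambda_T-\rank T)+|\sigma_T|=n-\rank T=\rank\Gamma$, these indecomposables exhaust the indecomposable injectives of $T^\perp$, so $J\cong\tau\overline C_1\oplus I_T$.

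The projective-generator argument and its dual are essentially routine; the real work is the last paragraph --- verifying that $\tau\overline C_1\in T^\perp$ and is injective there, and that together with $I_T$ it accounts for all the indecomposable injectives of $T^\perp$. That is where the structure of the relative Bongartz complements (Proposition \ref{B_1,B_2 Prop}, Corollary \ref{B-C-bijection}), the inclusion $\overline C_1\in{}^\perp T\cap\modcat\Lambda_T$, and careful use of the Auslander--Reiten translate (to pass from a left- to a right-perpendicular phenomenon) all genuinely enter.
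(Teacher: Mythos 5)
Your proof is correct and follows the paper's overall strategy: realise $\overline B$ as a projective generator of $T^\perp$ using Lemma \ref{Bongartz-generates} and the defining sequence $0\to T_B\to B\to\overline B\to 0$, then exhibit $\tau\overline C_1\oplus I_T$ as $n-\rank T$ pairwise non-isomorphic relative injectives in $T^\perp$ and finish by counting against $\rank\End(\overline B)$. The one place you diverge noticeably is in proving relative injectivity of $\tau\overline C_1$. The paper packages the Auslander--Reiten formulae into a stable equivalence $\tau\colon{}^\perp T/\add P_T\xrightarrow{\sim}T^\perp/\add I_T$ (a statement it reuses in Theorem \ref{derived-equiv}) and reads the desired vanishing off from the dual fact that $\overline C$ is an injective cogenerator of ${}^\perp T$; you instead obtain the key vanishing $\Hom(\overline C_1,X)=0$ for $X\in T^\perp$ directly from the sequences $0\to B_1\to T'\to C_1\to 0$ of Corollary \ref{B-C-bijection} and $0\to\overline C_1\to C_1\to{}_{C_1}T\to 0$, and separately verify $\tau\overline C_1\in T^\perp$ by hand. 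Both routes rest on the same ingredients; yours avoids formulating the stable equivalence explicitly at the cost of a slightly more computational argument. One small gap you leave implicit in the rank count: you use $\rank\Gamma=\rank\overline B=n-\rank T$, which requires observing that $\rank\overline B=\rank B$, as follows since $B$ is coprime to $T$ and the dual of Theorem \ref{perp-thm} is an equivalence identifying the indecomposable summands of $B$ with those of $\overline B$.
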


\begin{proof}
By the dual of Theorem \ref{perp-thm}, $\overline B\in T^\perp$. Also, by Lemma \ref{Bongartz-generates}, we know that every module $M\in T^\perp$ satisfies $\Ext^1(B,M)=0$ and is generated by $T\oplus B$, hence by $B$. Applying $\Hom(-,M)$ to the short exact sequence for $\overline B$ shows that $M$ is generated by $\overline B$ and that $\Ext^1(\overline B,M)=0$. Thus $\overline B$ is a projective generator for $T^\perp$.

Dually, $\overline C$ is an injective cogenerator for ${}^\perp T$. Now, the Auslander-Reiten formulae
\[ \Hom(X,T)\cong D\Ext^1(T,\tau X) \quad\textrm{and}\quad \Ext^1(X,T)\cong D\Hom(T,\tau X) \]
show that there is an equivalence of categories
\[ \tau\colon {}^\perp T/\add(P_T) \xrightarrow{\sim} T^\perp/\add(I_T). \]
In particular, $\Ext^1(M,\tau\overline C_1)=0$ for all $M\in T^\perp$ coprime to $I_T$. Also, since
\[ \Ext^1(I_T,\tau\overline C_1)\cong D\Hom(\overline C_1,I_T)\cong D\Hom(C_1,I_T)=0, \]
and
\[ \Hom(P_T,\overline C_1)\cong\Hom(P_T,C_1)=0 \]
we deduce that $\tau\overline C_1$ is a relative injective in $T^\perp$ and that $\tau^-\tau\overline C_1\cong\overline C_1$. Clearly $I_T\in T^\perp$ is also a relative injective, and $\tau\overline C_1\oplus I_T$ is basic of rank $n-\rank T$. Therefore $\tau\overline C_1\oplus I_T$ is an injective cogenerator for $T^\perp$.

We therefore have
\[ \End(\overline B)\cong\End(\tau\overline C_1\oplus I_T) \quad\textrm{and}\quad T^\perp \cong\modcat\End(\overline B). \]
Since $T^\perp$ is an hereditary $R$-category, we deduce that $\End(\overline B)$ is a basic hereditary artin $R$-algebra.
\end{proof}

We now make a small digression on APR-tilts \cite{APR}. Write $\Lambda_\Lambda=X\oplus Y$, where $X$ is simple and not injective. Then
\[ \Lambda \cong \begin{pmatrix}\End(X) & 0\\\Hom(X,Y)&\End(Y)\end{pmatrix}. \]
The Auslander-Platzeck-Reiten tilt of $\Lambda$ at $X$ is the algebra $\Gamma:=\End(\tau^-X\oplus Y)$. We observe that
\[ \Hom(\tau^-X,Y)\cong D\Ext^1(Y,X)=0 \quad\textrm{and}\quad \Hom(Y,\tau^-X)\cong\Ext^1(\nu Y,X), \]
where $\nu:=D\Hom(-,\Lambda)$ is the Nakayama functor (so that $\nu Y$ is the injective cover of the semisimple module $Y/\mathrm{rad}Y$). This latter isomorphism can be seen as follows. We know that $Y$ is a projective right $\Lambda$-module, so that $\Hom(Y,\Lambda)$ is a projective left $\Lambda$-module. Therefore
\begin{align*}
\Hom(Y,\tau^-X) &= \Hom(Y,\Ext^1(DX,\Lambda)) \cong \Ext^1(DX,\Lambda)\otimes_\Lambda\Hom(Y,\Lambda)\\
&\cong \Ext^1(DX,\Hom(Y,\Lambda)) \cong \Ext^1(D\Hom(Y,\Lambda),X) = \Ext^1(\nu Y,X).
\end{align*}
N.B. This is used to show that $\tau Y\cong\nu Y[-1]$ in the derived category of $\modcat\Lambda$ \cite{Happel}.

Since $\End(Y)\cong\End(\nu Y)$ and $\End(X)\cong\End(\tau^-X)$, we have that
\[ \Gamma \cong \begin{pmatrix}\End(X)&\Ext^1(\nu Y,X)\\0&\End(\nu Y)\end{pmatrix}. \]

\begin{Lem}
Suppose we can write $\Lambda_\Lambda=X\oplus Y$ with $\Hom(Y,X)=0$, so that
\[ \Lambda \cong \begin{pmatrix}\End(X) & 0\\\Hom(X,Y)&\End(Y)\end{pmatrix}. \]
Then the algebra
\[ \Gamma := \begin{pmatrix}\End(X)&\Ext^1(\nu Y,X)\\0&\End(\nu Y)\end{pmatrix} \]
can be obtained from $\Lambda$ by a sequence of APR-tilts.
\end{Lem}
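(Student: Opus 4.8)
The plan is to decompose the single transformation taking $\Lambda$ to $\Gamma$ into a sequence of elementary APR-tilts, one for each indecomposable summand of $X$. Write $X=X_1\oplus\cdots\oplus X_m$ as a sum of indecomposable (necessarily simple projective) modules; since $\Hom(Y,X)=0$ and $X$ is projective, each $X_k$ is a simple projective non-injective $\Lambda$-module, so an APR-tilt at $X_k$ is defined. The first observation I would record is that performing an APR-tilt at one summand $X_k$ does not disturb the situation for the others: after tilting, $X_k$ is replaced by $\tau^-X_k$, while the remaining summands of $X$ stay simple projective and satisfy the same $\Hom$-vanishing hypotheses in the new module category, because $\Hom(X_j, \tau^-X_k)\cong D\Ext^1(X_k,X_j)=0$ (as $X$ is rigid, being projective with $\Hom(Y,X)=0$ forcing no self-extensions among the $X_j$) and $\Hom(\tau^-X_k, X_j)\cong\Ext^1(\nu X_j, X_k)$ is accounted for in the target. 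Thus I can tilt at $X_1,\ldots,X_m$ in turn, and the composite is a sequence of APR-tilts.

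The second step is to identify the algebra obtained after this sequence with the asserted $\Gamma$. Each APR-tilt at $X_k$ is a tilt with respect to the tilting module $\tau^-X_k\oplus(\text{rest})$, and the composite of these tilting functors is the tilting functor associated to the module $\tau^-X\oplus Y$ — here I use that $\tau^-$ commutes with direct sums and that the intermediate tilting modules compose correctly, which follows from the standard fact that APR-tilts at distinct simple projectives can be performed independently. Hence the resulting algebra is $\End(\tau^-X\oplus Y)$. Now the discussion immediately preceding the lemma already computed, for a single simple non-injective summand, that $\End(\tau^-X\oplus Y)$ has the matrix form displayed with $\Ext^1(\nu Y,X)$ in the off-diagonal corner; the same computation applies verbatim here because the only inputs were $\Hom(\tau^-X,Y)\cong D\Ext^1(Y,X)=0$, the isomorphism $\Hom(Y,\tau^-X)\cong\Ext^1(\nu Y,X)$ derived there (which used only projectivity of $Y$ as a right module and the hereditary property, not indecomposability of $X$), and $\End(\tau^-X)\cong\End(X)$, $\End(Y)\cong\End(\nu Y)$. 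Assembling these identifications gives precisely the stated $\Gamma$.

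The main obstacle I anticipate is the bookkeeping in the first step: verifying carefully that the hypotheses ``$X$ simple projective, $\Hom(Y,X)=0$'' are preserved under each individual APR-tilt, so that the sequence is well-defined, and that the intermediate algebras really do fit into a chain of honest APR-tilts rather than some more general tilt. Concretely, after tilting at $X_1$ one must check that in $\modcat\End(\tau^-X_1\oplus Y)$ the images of $X_2,\ldots,X_m$ (under the tilting equivalence on the appropriate torsion class) are again simple projective and non-injective, and that the image of $Y\oplus\tau^-X_1$ plays the role of the ``$Y$'' for the next tilt with the required $\Hom$-vanishing. This is routine but needs the Auslander-Reiten formulae and the behaviour of simple projectives under APR-tilts; once it is in place, the identification of the final algebra is immediate from the computation already carried out in the text.
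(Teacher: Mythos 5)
There is a genuine gap at the very first step. You assert that, writing $X=X_1\oplus\cdots\oplus X_m$ into indecomposables, each $X_k$ is ``necessarily simple projective'' and non-injective, and you derive this from $\Hom(Y,X)=0$ together with the projectivity of $X$. Neither conclusion follows. The hypothesis $\Hom(Y,X)=0$ rules out maps from $Y$ into $X$, but says nothing about maps \emph{among} the summands of $X$; an indecomposable projective can perfectly well have a proper nonzero (projective) submodule lying in $\add(X)$ and so fail to be simple. For instance, with $\Lambda=k[1\to2\to3]$ and $X=P(2)\oplus P(3)$, $Y=P(1)$, one has $\Hom(Y,X)=0$ yet $P(2)$ is not simple. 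Likewise, nothing prevents a summand of $X$ from being injective (take $\Lambda=k\times k[A_2]$, $X=P(0)\oplus P(2)$, $Y=P(1)$, where $P(0)$ is projective--injective); in that case an APR-tilt at that summand is simply undefined. So the sequence of APR-tilts you propose to perform need not exist, and the subsequent bookkeeping cannot be started.

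The paper's proof works around exactly these two obstructions, and does so by a route quite different from ``tilt one summand at a time.'' It first splits $X=X_1\oplus X_2$, where $X_1$ collects the projective--injective summands of $X$ together with all their (indecomposable projective) submodules; using $\Hom(Y,X)=0$ and the hereditary property one checks that $\add(X_1)$ is a full block, giving $\Lambda\cong\End(X_1)\times\Lambda'$ and $\Gamma\cong\End(X_1)\times\Gamma'$. One then shows $\Gamma\cong\End(T)$ for $T:=X_1\oplus\tau^-X_2\oplus Y$, verifies that $T$ is a tilting module with $\Ext^1(\tau T,T)=0$, i.e.\ a slice, and invokes the standard result (cited to Ringel) that endomorphism algebras of slices are reached by sequences of APR-tilts. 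If you want to keep an inductive ``one sink at a time'' argument, you would first need to isolate the injective part of $X$ as above, and then argue by descending induction on the number of non-sink summands, showing at each stage that some summand of the (current) $X$ is a simple projective non-injective; as written, your reduction skips precisely this core step.
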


\begin{proof}
Since $\add(X)$ is closed under submodules, we can write $X=X_1\oplus X_2$, where $X_1$ contains all submodules of all projective-injective summands of $X$. Then $\Lambda\cong\End(X_1)\times\Lambda'$ and $\Gamma\cong\End(X_1)\times\Gamma'$, where
\[ \Lambda' \cong \begin{pmatrix}\End(X_2) & 0\\\Hom(X_2,Y)&\End(Y)\end{pmatrix} \quad\textrm{and}\quad \Gamma' \cong \begin{pmatrix}\End(X_2)&\Ext^1(\nu Y,X_2)\\0&\End(\nu Y)\end{pmatrix} \]
We now observe that, as in the preceding comments, $\Gamma'\cong\End(\tau^-X_2\oplus Y)$. Therefore $\Gamma\cong\End(T)$, where $T:=X_1\oplus\tau^-X_2\oplus Y$. We note that $\Ext^1(T,T)=0$, so that $T$ is a tilting module, and that $\Ext^1(\tau T,T)=0$, so that $T$ is a slice in the Auslander-Reiten quiver of $\Lambda$. Thus $\Gamma$ is hereditary and can be obtained from $\Lambda$ by a sequence of APR-tilts \cite{Ringel3}.
\end{proof}

\begin{Thm}\label{derived-equiv}
The hereditary categories $T^\perp$ and ${}^\perp T$ are derived equivalent. In fact, $\End(\overline B)$ and $\End(\overline C)$ are related by a sequence of APR-tilts.
\end{Thm}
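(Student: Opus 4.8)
The plan is to exhibit both $\End(\overline B)$ and $\End(\overline C)$ as algebras of the special triangular form to which the preceding lemma applies, with the \emph{same} pieces $X$ and $Y$ up to the Nakayama twist, and then invoke the lemma twice. First I would recall from Theorem \ref{proj-gen-inj-cogen} (and its dual) that $T^\perp\cong\modcat\End(\overline B)$ has injective cogenerator $\tau\overline C_1\oplus I_T$, while ${}^\perp T\cong\modcat\End(\overline C)$ has injective cogenerator $\overline C=\overline C_1\oplus\overline C_2$. So I want to understand how these two hereditary algebras sit relative to one another. The key structural input is Corollary \ref{B-C-bijection}: the indecomposable summands of $B_1$ and $C_1$ are in bijection via short exact sequences $0\to B(j)\to T_j\to C(j)\to 0$, and the summands $B_2$ (resp. $C_2$) are in bijection with $P_T$ (resp. $I_T$), the relevant cokernels being $\overline B_2$ and $\overline C_2$ and related by the dual of Proposition \ref{B_1,B_2 Prop}.

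The substantive step is to identify $\End(\overline C)$ as an upper-triangular algebra with corner blocks $\End(\overline C_1)$ and $\End(I_T)\cong\End(P_T)$, with zero in the lower-left corner (i.e. $\Hom(\overline C_1, I_T)=0$ in $T^\perp$ — wait, one needs $\Hom$ in ${}^\perp T$ between the two halves of the injective cogenerator, so $\Hom(\overline C_2,\overline C_1)$-type vanishing), and then to show that applying the APR-tilt lemma produces exactly an algebra of the shape $\begin{pmatrix}\End(X)&\Ext^1(\nu Y,X)\\0&\End(\nu Y)\end{pmatrix}$ which I can then match with the triangular description of $\End(\overline B)$ coming from the injective cogenerator $\tau\overline C_1\oplus I_T$ of $T^\perp$. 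The point is that $\tau$ induces the equivalence ${}^\perp T/\add(P_T)\xrightarrow{\sim}T^\perp/\add(I_T)$ used in the proof of Theorem \ref{proj-gen-inj-cogen}, and $\tau^-\tau\overline C_1\cong\overline C_1$, so $\tau\overline C_1$ is the ``$\tau^-X$'' of an APR-type tilt: passing from $\overline C$ to $\tau\overline C_1\oplus I_T$ should be recognizable as replacing the non-injective simple-summand part $X$ of the injective cogenerator of ${}^\perp T$ (or of the corresponding hereditary algebra written with $X\oplus Y$ projective) by $\tau^- X$, which by the lemma is a composite of APR-tilts. So concretely: write the hereditary algebra for ${}^\perp T$ in the form $\begin{pmatrix}\End(X)&0\\\Hom(X,Y)&\End(Y)\end{pmatrix}$ with $\overline C_1$ playing the role of (the module corresponding to) $X$ and $I_T$ that of $Y$, check $\Hom(Y,X)=0$ using the radical/approximation vanishing already established (e.g. $\Hom(P_T,\overline C_1)\cong\Hom(P_T,C_1)=0$ and the dual of $\Hom({}_{B_2}T,B_2)=0$), apply the Lemma to get $\Gamma\cong\begin{pmatrix}\End(X)&\Ext^1(\nu Y,X)\\0&\End(\nu Y)\end{pmatrix}$, and finally identify $\Gamma$ with $\End(\overline B)$ using $\End(\overline B)\cong\End(\tau\overline C_1\oplus I_T)$, the isomorphisms $\End(\overline C_1)\cong\End(\tau\overline C_1)$ and $\End(I_T)\cong\End(P_T)\cong\End(\nu$-twist$)$, and the Auslander-Reiten formula $\Ext^1(\nu Y,X)\cong\Ext^1(I_T\text{-part},\tau\overline C_1)$-type identification of the off-diagonal block. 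Derived equivalence then follows since APR-tilts are tilting modules, hence give derived equivalences \cite{Happel}.

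The main obstacle I expect is the bookkeeping in matching the off-diagonal blocks: one must verify that the $\Hom$-space between the two halves of the injective cogenerator $\tau\overline C_1\oplus I_T$ of $T^\perp$ — computed inside $T^\perp$, not inside $\modcat\Lambda$ — really is the $\Ext^1(\nu Y, X)$ predicted by the APR-tilt Lemma applied to the hereditary algebra of ${}^\perp T$, rather than merely being abstractly isomorphic to something of the right dimension. This requires carefully tracking the equivalences $T^\perp\cong\modcat\End(\overline B)$ and ${}^\perp T\cong\modcat\End(\overline C)$ together with the $\tau$-equivalence between the stable categories, and checking that the projective (on the $\overline B$ side) / injective (on the $\overline C$ side) structures are interchanged compatibly with the Nakayama functor. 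A secondary subtlety is making sure the decompositions $\overline B=\overline B_1\oplus\overline B_2$ and $\overline C=\overline C_1\oplus\overline C_2$ are the ones aligning with the $X=X_1\oplus X_2$ splitting in the Lemma (the projective-injective part $X_1$ corresponds to the part where no tilt is needed, which should match $I_T$ versus $P_T$ behaviour); once the correspondence of summands is pinned down via Corollary \ref{B-C-bijection} the rest is formal.
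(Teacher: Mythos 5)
Your high-level strategy coincides with the paper's: put the endomorphism ring of a projective generator of one perpendicular category into the triangular form required by the APR-tilt Lemma, read off the Lemma's output, and identify it with the endomorphism ring coming from the other perpendicular category. But the concrete setup you describe does not type-check, and the one genuinely substantive identification is left as a hand-wave.

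The Lemma is stated for a decomposition $\Lambda_\Lambda=X\oplus Y$ of the \emph{regular} module, i.e.\ of a projective generator, with $\Hom(Y,X)=0$. You propose to take ``$X=\overline C_1$ and $Y=I_T$.'' But $\overline C_1$ is an \emph{injective} in ${}^\perp T$ (part of the injective cogenerator $\overline C=\overline C_1\oplus\overline C_2$), and $I_T$ does not even lie in ${}^\perp T$ — it is the injective summand in $T^\perp$. Neither is a summand of the projective generator $\tau^-\overline B_1\oplus P_T$ of ${}^\perp T$. Your parenthetical hedges (``the module corresponding to $X$'', ``$\Hom(\overline C_2,\overline C_1)$-type vanishing'') suggest you sense something is off, and indeed the $\Hom$-vanishing you cite, $\Hom(P_T,\overline C_1)\cong\Hom(P_T,C_1)=0$, is a statement about $P_T$, which is the relevant projective, not about $I_T$. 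The ``invoke the lemma twice'' idea is also off: one application suffices, provided one matches the output correctly.

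The paper works from the other side and avoids all of this: take the projective generator $\overline B=\overline B_1\oplus\overline B_2$ of $T^\perp$, set $X=\overline B_1$, $Y=\overline B_2$. The required vanishing $\Hom(\overline B_2,\overline B_1)=0$ is derived from the two short exact sequences $0\to P_T\to B_2\to{}_{B_2}T\to0$ and $0\to T_{B_2}\to B_2\to\overline B_2\to0$, giving $\Hom(\overline B_2,\overline B_1)\cong\Hom(B_2,\overline B_1)\cong\Hom(P_T,\overline B_1)=0$. The Lemma then outputs $\begin{pmatrix}\End(\overline B_1)&\Ext^1(\bar\nu\overline B_2,\overline B_1)\\0&\End(\bar\nu\overline B_2)\end{pmatrix}$, where $\bar\nu$ is the Nakayama functor of $T^\perp$; the entire remaining content is the single identification $\bar\nu\overline B_2\cong I_T$. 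This is exactly the step you flag as ``the main obstacle'' and defer to ``carefully tracking the equivalences,'' but there is a one-line argument: $\bar\nu\overline B=\tau\overline C_1\oplus I_T$, and $\Hom(\overline B_1,I_T)\cong\Hom(B_1,I_T)=0$ forces $\bar\nu\overline B_1\cong\tau\overline C_1$ and hence $\bar\nu\overline B_2\cong I_T$. Together with $\nu P_T\cong I_T$, $\Hom(\tau^-\overline B_1,P_T)=0$, and $\Hom(P_T,\tau^-\overline B_1)\cong\Ext^1(I_T,\overline B_1)$, this matches the Lemma's output with $\End(\tau^-\overline B_1\oplus P_T)\cong\End(\overline C)$. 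So: right idea, but you should restart from the projective generator of $T^\perp$ (or, if you prefer the other side, from $\tau^-\overline B_1\oplus P_T$ with $X=P_T$, $Y=\tau^-\overline B_1$), and you must supply the Nakayama identification rather than postpone it.
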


\begin{proof}
We know that $\overline B=\overline B_1\oplus\overline B_2$ is a projective generator for $T^\perp$. We set
\[ \Gamma:=\End(\overline B_1\oplus\overline B_2). \]
Similarly, $\tau^-\overline B_1\oplus P_T$ is a projective generator for ${}^\perp T$. We set
\[ \Gamma':=\End(\tau^-\overline B_1\oplus P_T)\cong\End(\overline C). \]

Now, using the short exact sequences
\[ 0\to P_T\to B_2 \to {}_{B_2}T\to 0 \quad\textrm{and}\quad 0 \to T_{B_2}\to B_2\to \overline B_2\to 0, \]
we obtain that
\[ \Hom(\overline B_2,\overline B_1)\cong\Hom(B_2,\overline B_1)\cong\Hom(P_T,\overline B_1)=0. \]
Also, it is clear that $\nu P_T\cong I_T$ and that $\Hom(\tau^-\overline B_1,P_T)=0$. Thus
\[ \Gamma'\cong\begin{pmatrix}\End(\overline B_1)&\Ext^1(I_T,\overline B_1)\\0&\End(I_T)\end{pmatrix}. \]
By the previous lemma, it only remains to show that $\bar\nu(\overline B_2)\cong I_T$, where $\bar\nu$ is the Nakayama functor for $T^\perp$.

We know that $\bar\nu\overline B=\tau\overline C_1\oplus I_T$. Applying $\Hom(-,I_T)$ to the short exact sequence for $\overline B_1$ gives that
\[ \Hom(\overline B_1,I_T)\cong\Hom(B_1,I_T)=0. \]
Thus, inside $T^\perp$, $\bar\nu \overline B_1 \cong\overline C_1$ and $\bar\nu \overline B_2 \cong I_T$ as required.
\end{proof}

\section{The Cluster Complex as a Simplicial Polytope}

We begin by recalling the definition of an abstract simplicial polytope \cite{MS}.

Let $\mathcal P$ be a poset, whose elements are called faces, such that
\begin{enumerate}
\item[AP1] there are unique maximal and minimal faces.
\item[AP2] each flag (maximal chain) has length $n+1$. We set $\rk\mathcal P:=n$.
\end{enumerate}
Given faces $F\leq G$, we define the section
\[ G/F := \{H:F\leq H\leq G\}. \]
Thus each section is again a poset satisfying AP1 and AP2.

The poset $\mathcal P$ is called an abstract polytope if two further axioms are satisfied:
\begin{enumerate}
\item[AP3] $\mathcal P$ is strongly flag connected.
\item[AP4] if $\rk G/F=1$, then the poset contains precisely four elements.
\end{enumerate}
We observe that AP4 is inherited by sections. Also, if AP4 is satisfied and $F_{-1}<F_0<\cdots<F_i<\cdots<F_n$ is a flag, then there exists a unique face $\widehat F_i$ such that $F_{-1}<F_0<\cdots<\widehat F_i<\cdots<F_n$ is again a flag. We say that two such flags are adjacent. Now, adjacency induces an equivalence relation on the set $\mathcal F(\mathcal P)$ of all flags, and we say that $\mathcal P$ is flag connected if any two flags are equivalent, so one can be transformed into the other via a sequence of adjacent flags. Finally, we say that $\mathcal P$ is strongly flag connected if each section of $\mathcal P$ is flag connected. It follows that each section is again an abstract polytope.

We may identify a face $F$ with the section $F/F_{-1}$, and so define $\rk F:=\rk F/F_{-1}$. Note that $\rk G/F=\rk G-\rk F-1$. The section $F_n/F$ is called the co-face at $F$.

We call $\mathcal P$ simplicial if each facet (rank $n-1$) is adjacent to $n$ ridges (rank $n-2$). Equivalently, each proper face $F$ is isomorphic to an $r$-simplex, where $r=\rk F$. In this case, $\mathcal P\setminus F_n$ is a simplicial complex. We observe that each regular polygon is a simplicial polytope, but only the triangle is a simplicial complex.

\medskip

We now define a poset $\mathcal T=\mathcal T_\Lambda$ using the rigid modules for $\Lambda$. Let
\[ \mathcal E:=\{1,\ldots,n\}\cup\{\textrm{exceptional modules}\}/\cong \]
and define a poset $\mathcal T'\subset\mathcal P(\mathcal E)$ such that $\{T(1),\ldots,T(r),j_1,\ldots,j_s\}\in\mathcal T'$ provided $T:=\bigoplus_iT(i)$ is rigid and $\sigma:=\{j_1,\ldots,j_s\}\subset\sigma_T$. We write $\mathcal T:=\mathcal T'\cup\{\mathcal E\}$ and set
\[ \rk(T,\sigma):=\rank T+|\sigma|-1, \quad \rk\mathcal E:=n. \]
We note that there is a discrepancy between the rank of a tilting module and its rank as a face of the polytope. Hence we shall use $\rk$ when we mean the rank as a face of the polytope.

\begin{Thm}\label{Main Thm}
Let $\Lambda$ be a basic hereditary artin algebra. Then the poset $\mathcal T$ is an abstract simplicial polytope, called the cluster complex of $\Lambda$.
\end{Thm}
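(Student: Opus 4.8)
The plan is to verify the four axioms AP1--AP4 for $\mathcal T$, together with the simplicial condition, reducing the two difficult axioms (AP3 and AP4) to lower-rank statements via the perpendicular-category machinery of the previous sections. Axioms AP1 and AP2 are essentially bookkeeping: the unique minimal face is $(0,\emptyset)$ and the unique maximal face is $\mathcal E$, while AP2 follows because a flag in $\mathcal T'$ corresponds to a chain of rigid-plus-support data whose rank strictly increases by one at each step, the top of such a chain being a basic support-tilting module, i.e. a pair $(T,\sigma)$ with $\rank T+|\sigma|=n$; adjoining $\mathcal E$ gives total length $n+1$. The simplicial condition is equally direct: a proper face $(T,\sigma)$ with $T=\bigoplus_{i=1}^r T(i)$ is the boolean lattice on the $r+|\sigma|$ "vertices'' $\{T(1),\dots,T(r)\}\cup\sigma$, since every subset of these vertices again gives an admissible pair (any summand of a rigid module is rigid, and $\sigma'\subset\sigma\subset\sigma_T\subset\sigma_{T'}$ for $T'$ a summand of $T$). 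Hence each proper face is an $r$-simplex, which is exactly the simplicial condition, and it also yields AP4 for any section $G/F$ with $G\neq\mathcal E$.

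The two real points are AP4 for sections of the form $\mathcal E/F$ (co-faces) and AP3. For AP4 at a rank-one co-face: a co-face $\mathcal E/(T,\sigma)$ is, I claim, isomorphic to the poset $\mathcal T_\Gamma$ for the hereditary artin algebra $\Gamma$ with $\modcat\Gamma\cong(T,\sigma)^\perp$ furnished by Theorem \ref{proj-gen-inj-cogen} (applied to the algebra $\Lambda_\sigma$ and the rigid $\Lambda_\sigma$-module $T$), with $\rank\Gamma=n-\rank T-|\sigma|$; this is precisely the content flagged in the introduction as Theorem \ref{co-face}, and its proof rests on Theorem \ref{perp-thm} together with Proposition \ref{B_1,B_2 Prop} (the relative Bongartz complement being a summand of the full one, so that one may mutate only at the summands outside a chosen subobject). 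Granting this isomorphism, a rank-one co-face corresponds to $\mathcal T_\Gamma$ with $\rank\Gamma=2$, so it suffices to check that the cluster complex of a rank-two hereditary artin algebra is a polygon --- i.e. has exactly four elements in each rank-one section and is flag connected. That is Corollary \ref{rank2}: a rank-two hereditary artin algebra has only finitely many exceptional modules arranged so that the support-tilting modules form a cycle (square, pentagon, hexagon or octagon in the finite cases, a doubly infinite line otherwise), and the "four elements'' count for $\rk G/F=1$ is immediate from the polygon picture. I would phrase AP4 uniformly by: every rank-one section $G/F$ with $G\neq\mathcal E$ has four elements by the simplicial (boolean) structure, and every rank-one section $\mathcal E/F$ has four elements by the co-face isomorphism plus the rank-two classification.

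For AP3 (strong flag connectedness) I would argue by induction on $\rk\mathcal P$, where the sections to be shown flag connected are themselves of the form $G/F$; when $G\neq\mathcal E$ this is a simplex and trivially strongly flag connected, and when $G=\mathcal E$ it is $\mathcal T_\Gamma$ for a smaller $\Gamma$ by the co-face isomorphism, so it is enough to prove that $\mathcal T=\mathcal T_\Lambda$ itself is flag connected for every $\Lambda$. Here is where the measure $\lambda$ enters: given any facet, i.e. support-tilting module $(T,\sigma)$ with $T=\bigoplus_{i=1}^r T(i)$, set $\lambda(T,\sigma)=(0,\dots,0,\;\dim_k T(1)/\sqrt{\dim_k\End T(1)},\dots)$ with the entries sorted in increasing order and $|\sigma|$ leading zeros, ordered lexicographically. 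The claim (whose hard half is Theorem \ref{total-order}/Corollary \ref{rank2}) is that for a summand $T'$ of $T$, the relative Bongartz complement $B$ of $T'$ --- formed inside $\modcat\Lambda_{\sigma_{T'}}$ --- gives a support-tilting module $T'\oplus B$ (again support-tilting by Proposition \ref{B_1,B_2 Prop} and Proposition \ref{r=n}) that is mutation-equivalent to $(T,\sigma)$ and has \emph{strictly smaller} $\lambda$ whenever $(T,\sigma)$ is not already of the form $(0,\sigma_{\mathrm{all}})$ with an appropriate choice of $T'$; iterating, every facet is connected by mutations (adjacent-flag moves) to the unique facet $(0,\{1,\dots,n\})$, and flag connectedness of $\mathcal T$ follows. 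I expect the main obstacle to be exactly the verification that this normalized length $\dim_k X/\sqrt{\dim_k\End X}$ behaves monotonically under passing to Bongartz and dual Bongartz complements --- the $G_2$ example in the introduction shows the naive length fails, and making the $\sqrt{\dim_k\End X}$ correction work requires the rank-two analysis and the structural input of Theorems \ref{min-left-inj} and \ref{perp-thm} about how $\End$ transforms along the exchange short exact sequences. Once Theorem \ref{total-order}, Corollary \ref{rank2} and the co-face isomorphism Theorem \ref{co-face} are in hand, AP1--AP4 assemble as above and $\mathcal T$ is an abstract simplicial polytope.
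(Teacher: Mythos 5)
Your outline follows essentially the same path as the paper: AP1, AP2 and the simplicial/boolean structure of proper sections are bookkeeping (Lemma \ref{simplex}); the co-face isomorphism (Theorem \ref{co-face}) together with induction on rank disposes of AP3 and AP4 for all proper sections; and a descent on a measure built from $\mu(X)=\ell_RX/\sqrt{\ell_R\End(X)}$, with the rank-two Corollary \ref{rank2} supplying the strict decrease, handles flag connectedness of $\mathcal T$ itself. A minor slip first: if $\rk\,\mathcal E/(T,\sigma)=1$ then $\rank T+|\sigma|=n-1$, so $\rank\Gamma=1$, not $2$; the co-face is then a line segment (four elements), which of course still gives AP4. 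The rank-two classification is what you need one level down, for co-faces of rank $2$, i.e.\ the links yielding squares, pentagons, hexagons, octagons or lines.

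The substantive gap is termination of the descent. You assert that iterating $\lambda(T)>\lambda(T')>\cdots$ reaches $(0,\{1,\dots,n\})$, but the values $\mu(M)=\ell_RM/\sqrt{\ell_R\End(M)}$ are generally irrational, so the lexicographic order on the $\lambda$-tuples is not a priori well-founded; nothing you say rules out an infinite strictly decreasing chain. The paper closes this with Proposition \ref{Endos}: two mutation-equivalent support-tilting modules have isomorphic products $\prod_i\End(T(i))\times\prod_{i\in\sigma_T}\End(S(i))$, so within one mutation class the multiset of possible denominators $\sqrt{\ell_R\End(T(i))}$ is fixed, the numerators are positive integers, and only then does the set of attainable $\lambda$-values have no infinite descending chain. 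You also leave the choice of the summand $T'$ vague; the argument requires taking a single indecomposable summand $M$ with $\mu(M)$ minimal, passing to $\Lambda_T$ if $M$ is insincere, and for $M$ sincere using both the Bongartz and dual Bongartz completions $B\oplus M$ and $M\oplus C$ (both reachable by the co-face induction), together with the exchange sequences $0\to B(i)\to M^{r_i}\to C(i)\to 0$ and a rank-two reduction inside $Z(i)^\perp$, to see that some $\mu(B(i))$ or some $\mu(C(i))$ is strictly less than $\mu(M)$. Without this specific choice and the endomorphism-ring invariance, the descent does not close.
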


The rest of this section will be devoted to the proof of this theorem.

We observe that the empty set is the unique minimal face (having rank $-1$) and that $\mathcal E$ is the unique maximal face (having rank $n$), so AP1 is satisfied.

Let $(T,\sigma)\in\mathcal T'$ be maximal. Then clearly $\sigma=\sigma_T$ and $T$ must be a tilting $\Lambda_T$-module (for otherwise we could take the relative Bongartz completion of $T$). Thus, by Proposition \ref{r=n}, $\rk(T,\sigma)=n-1$.

We call a rigid module $T$ support-tilting if it is a tilting module for $\Lambda_T$. It follows that the facets of $\mathcal T$ are in bijection with the isomorphism classes of basic support-tilting modules. In particular, the trivial support-tilting module $0$ corresponds to the facet $\{1,\ldots,n\}$.

\begin{Lem}\label{simplex}
Given faces $F<G$ in $\mathcal T'$, the section $G/F$ is isomorphic to a $k$-simplex, where $k=\rk G/F$. Thus $\mathcal T'$ is a pure simplicial complex of rank $n-1$.
\end{Lem}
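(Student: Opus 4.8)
The plan is to show directly that the section $G/F$ in $\mathcal{T}'$ is the full power set of a $(k+1)$-element set (equivalently, a $k$-simplex), by identifying that set explicitly. Write $G=(U,\tau)$ with $U=\bigoplus_{i=1}^m U(i)$, so that $G$ corresponds to the $(k+1+\rk F - (-1))$\ldots more simply, $G$ as a subset of $\mathcal{E}$ is $\{U(1),\ldots,U(m)\}\cup\tau$ with $U$ rigid and $\tau\subseteq\sigma_U$. Any $H$ with $F\leq H\leq G$ is then a subset of $G$ (as a subset of $\mathcal{E}$) containing $F$. So the combinatorial content is: every subset $H$ of $G$ with $H\supseteq F$ lies in $\mathcal{T}'$. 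First I would check that any subset $S\subseteq G$ is itself a face of $\mathcal{T}'$: writing $S=\{U(i):i\in A\}\cup\rho$ with $A\subseteq\{1,\ldots,m\}$ and $\rho\subseteq\tau$, the module $U_A:=\bigoplus_{i\in A}U(i)$ is a summand of the rigid module $U$, hence rigid, and $\rho\subseteq\tau\subseteq\sigma_U\subseteq\sigma_{U_A}$ since the support of a summand is contained in the support of $U$. Thus $S\in\mathcal{T}'$. In particular every subset of $G$ containing $F$ is a face, so $G/F$ is isomorphic as a poset to the set of subsets of $G\setminus F$, which is a simplex.

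Next I would pin down the dimension. By definition $\rk G=\rk(U,\tau)=\operatorname{rank}U+|\tau|-1=(m+|\tau|)-1$, and likewise $\rk F$. The section $G/F$, viewed as the power set of the $(|G|-|F|)$-element set $G\setminus F$ (where $|G|$ denotes cardinality as a subset of $\mathcal{E}$), has top face of rank $|G\setminus F|-1$; and indeed $|G\setminus F|-1 = |G|-|F|-1 = (\rk G+1)-(\rk F+1)-1 = \rk G-\rk F-1 = \rk G/F = k$, using the identity $|G|=\operatorname{rank}U+|\tau|=\rk G+1$ and the analogous one for $F$. Hence $G/F$ is a $k$-simplex.

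Finally, purity: taking $F=\emptyset$ (the minimal face), the facets of $\mathcal{T}'$ are the maximal elements of $\mathcal{T}'$, and by the discussion preceding the lemma every maximal $(T,\sigma)$ has $\sigma=\sigma_T$ with $T$ a tilting $\Lambda_T$-module, so by Proposition \ref{r=n} has $\rk(T,\sigma)=n-1$. Every face is contained in such a facet (complete a rigid module to a support-tilting module via the relative Bongartz completion, using Bongartz's theorem inside $\modcat\Lambda_T$ and Proposition \ref{r=n}), so $\mathcal{T}'$ is a pure simplicial complex of rank $n-1$, and each of its faces is a simplex by the first part.

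There is essentially no obstacle here: the only point requiring a word of care is the bookkeeping between $\operatorname{rank}T$ and $\rk$ as a face of the polytope (the off-by-one noted in the text), and the observation that $\sigma_{U_A}\supseteq\sigma_U$ for a summand $U_A$ of $U$ — both are immediate. The substantive inputs (Bongartz completion, Proposition \ref{r=n}) are only needed for the purity statement, not for the simplex structure of a section.
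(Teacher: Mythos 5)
Your proof is correct and takes essentially the same route as the paper: both identify the section $G/F$ with the power set of $G\setminus F$ by observing that every subset of a face is again a face (a summand of a rigid module is rigid, and the support can only shrink), and both derive purity from Proposition \ref{r=n} via the observation preceding the lemma about maximal faces. Your version is simply slightly more explicit about the bookkeeping between $|G|$ and $\rk G$ and about the purity argument, which the paper leaves to the reader.
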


\begin{proof}
Let $F=(T,\sigma)$ and $G=(T\oplus T',\sigma\cup\sigma')$. Then the faces of $G/F$ are those of the form $(T\oplus U,\sigma\cup\rho)$ such that $U$ is a direct summand of $T'$ and $\rho\subseteq\sigma'$. Hence the section $G/F$ is isomorphic to the power set of $\{0,1,2,\ldots,k\}$, where $k=\rk(T',\sigma')=\rk G/F$.
\end{proof}

We extend the definition of perpendicular categories to faces of $\mathcal T$ by setting
\[ (T,\sigma)^\perp := T^\perp\cap\modcat\Lambda_\sigma \quad\textrm{and}\quad {}^\perp(T,\sigma) := {}^\perp T\cap\modcat\Lambda_\sigma. \]
By first restricting to $\modcat\Lambda_\sigma$, Theorem \ref{perp-thm} implies that
\[ {}^\perp(T,\sigma) \cong \{M\in\modcat\Lambda_\sigma:\Ext^1(T,M)=0=\Ext^1(M,T), \lambda_M\textrm{ epi}\}/\add(T). \]
Moreover, by Theorem \ref{proj-gen-inj-cogen} and its dual, both ${}^\perp(T,\sigma)$ and $(T,\sigma)^\perp$ are equivalent to module categories of basic hereditary artin algebras having rank $n-\rk(T,\sigma)-1=\rk\mathcal E/(T,\sigma)$.

\begin{Thm}\label{co-face}
Let $(T,\sigma)$ be a face of $\mathcal T$. Then the co-face $\mathcal E/(T,\sigma)$ is isomorphic to $\mathcal T_{{}^\perp(T,\sigma)}$, and also to $\mathcal T_{(T,\sigma)^\perp}$.
\end{Thm}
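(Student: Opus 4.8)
The plan is to produce a rank-preserving isomorphism of posets between the co-face $\mathcal E/(T,\sigma)$ and the cluster complex $\mathcal T_\Gamma$, where $\Gamma:=\End(\overline B)$ (restricting everything to $\modcat\Lambda_\sigma$ first, so that without loss of generality we may assume $\sigma=\emptyset$ and work with $T^\perp\cong\modcat\Gamma$). The key observation is that Theorem \ref{perp-thm} and Theorem \ref{proj-gen-inj-cogen} give us more than an equivalence of abelian categories: the functors restrict to a bijection on rigid objects. Concretely, a face of $\mathcal E/(T,\sigma)$ above $(T,\sigma)$ has the form $(T\oplus U,\sigma\cup\rho)$, where $U$ is rigid, coprime to $T$, with $\Ext^1(T,U)=0=\Ext^1(U,T)$ and $\lambda_U$ epi (after restriction to $\modcat\Lambda_\rho$-business), and $\rho$ is a set of vertices disjoint from $\mathrm{supp}(T\oplus U)$. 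I would first show that under the equivalence $F\colon \{M:\Ext^1(T,M)=0=\Ext^1(M,T),\lambda_M\text{ epi}\}/\add(T)\xrightarrow{\sim}T^\perp$, the module $U$ maps to a rigid $\Gamma$-module $F(U)$, and conversely every rigid $\Gamma$-module lifts this way; this is because $F$ is exact and fully faithful (so preserves and reflects $\Ext^1$), and because a complement added to $T$ in $\modcat\Lambda$ corresponds exactly to the image $F(U)\in T^\perp=\modcat\Gamma$.

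Next I would handle the "support" bookkeeping — this is where the two presentations $T^\perp$ and ${}^\perp T$ both get used. The vertices of $\Gamma$ correspond to the indecomposable summands of the projective generator $\overline B=\overline B_1\oplus\overline B_2$, and by Corollary \ref{B-C-bijection} (together with Proposition \ref{B_1,B_2 Prop}) the summands of $B_2$ — hence of $\overline B_2$ — are in canonical bijection with the vertices of $\sigma_T$, i.e. with the "extra" idempotents $P_\sigma$-type data. So a face $(T\oplus U,\sigma\cup\rho)$ of the co-face, where $\rho\subseteq\sigma_T\setminus\mathrm{supp}\,U$, corresponds to the pair $(F(U),\rho')$ where $\rho'$ is the set of vertices of $\Gamma$ indexing those summands of $\overline B_2$ attached to $\rho$; and the condition $\rho\subseteq\sigma_{T\oplus U}$ translates precisely to $\rho'\subseteq\sigma_{F(U)}$ in $\modcat\Gamma$ because $\mathrm{supp}_\Gamma F(U)$ is governed by which indecomposable projectives $\Hom(\overline B(i),F(U))$ detect, and these match the summands of $B$ generating $U$. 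I would verify the rank identity $\rk_{\mathcal E/(T,\sigma)}(T\oplus U,\sigma\cup\rho)=\rank U+|\rho|-1=\rk_{\mathcal T_\Gamma}(F(U),\rho')$, using that $\rank U = \rank F(U)$ since $F$ is additive and reflects indecomposability and isomorphism on this subquotient category.

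The map I have described is order-preserving in both directions essentially by construction: $(T\oplus U,\sigma\cup\rho)\leq(T\oplus U',\sigma\cup\rho')$ in $\mathcal T$ iff $U$ is a summand of $U'$ and $\rho\subseteq\rho'$, which under $F$ (and the fixed bijection $\sigma_T\leftrightarrow\{\text{summands of }\overline B_2\}$) is exactly $F(U)$ a summand of $F(U')$ and $\rho'\subseteq\rho''$; the maximal face $\mathcal E$ of the co-face goes to the maximal face of $\mathcal T_\Gamma$, and the minimal face $(T,\sigma)$ goes to the minimal face $\emptyset$. So the main content is checking that "rigid + complementary to $T$ in $\modcat\Lambda$" corresponds exactly to "rigid in $\modcat\Gamma$", and that the support combinatorics match up; the statement for ${}^\perp(T,\sigma)$ then follows either by the dual argument applied to the dual Bongartz complement, or by invoking Theorem \ref{derived-equiv} together with the already-established isomorphism $\mathcal E/(T,\sigma)\cong\mathcal T_{(T,\sigma)^\perp}$ (noting that $\mathcal T_\Gamma$ depends only on the algebra up to the relevant invariants, and APR-tilts do not change the cluster complex — though a cleaner route is simply to rerun the argument verbatim using $\overline C$ in place of $\overline B$).

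The main obstacle I anticipate is the support bookkeeping in the second paragraph: one must be careful that when passing to $\modcat\Lambda_\sigma$ first and then to $\modcat\Gamma$, the "free" vertices $\sigma$ (which were already outside $\mathrm{supp}\,T$ but are being treated as part of the face) do not get double-counted, and that the bijection between $\sigma_T$ and the summands of $\overline B_2$ (Corollary \ref{B-C-bijection}) is compatible with adding further rigid summands — i.e. that it behaves well in families, not just for the pair $(T,\sigma_T)$ in isolation. Establishing this naturality, so that the assignment $(T\oplus U,\sigma\cup\rho)\mapsto(F(U),\rho')$ is genuinely functorial in the face poset rather than defined ad hoc face-by-face, is the technical heart of the proof; once it is in place, everything else is a direct unwinding of definitions.
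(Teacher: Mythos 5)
Your overall strategy matches the paper's: reduce to $\sigma=\emptyset$, use the perpendicular-category equivalence from Theorem \ref{perp-thm} to transport rigid modules, and supplement this with a bijection on "support vertices" to handle the full face structure. However, there are two concrete problems.

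First, a bookkeeping slip: the functor $F$ of Theorem \ref{perp-thm}, with $\lambda_M$ epi and $F(M)=\Ker\lambda_M$, lands in ${}^\perp T$, not in $T^\perp$ as you write. To realise the co-face via $T^\perp\cong\modcat\End(\overline B)$ you must use the \emph{dual} functor, with $\rho_M$ mono and $\mathrm{Coker}(\rho_M)$; to use $\Ker(\lambda_M)$ you should phrase everything in ${}^\perp T$, with projective generator $\tau^-\overline B_1\oplus P_T$. This is fixable, but the confusion propagates into the substantive gap below.

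Second, and this is the real issue: your correspondence is not defined on all faces of the co-face, and the vertex-bookkeeping you describe is incomplete. You write that "the module $U$ maps to a rigid $\Gamma$-module $F(U)$" and send $(T\oplus U,\sigma\cup\rho)$ to $(F(U),\rho')$ with $\rho'$ determined \emph{only} from $\rho$ via the bijection $\sigma_T\leftrightarrow\{\text{summands of }\overline B_2\}$. But $F$ (in either incarnation) is only defined on the quotient of those $M$ with the appropriate approximation condition: $\lambda_M$ epi for $\Ker(\lambda)$, or $\rho_M$ mono for $\mathrm{Coker}(\rho)$. Not every rigid $U$ complementing $T$ satisfies this. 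In the $\Ker(\lambda)$ setting, the indecomposables with $\lambda$ mono are precisely the summands of the relative Bongartz complement $B_1$, and these are sent to $0$; in the $\mathrm{Coker}(\rho)$ setting, the indecomposables with $\rho$ epi are precisely the summands of $C_1$, and again the functor is not defined on them. These excluded summands are exactly the ones that must be converted into \emph{vertices} of $\Gamma$ — namely the vertices corresponding to the summands of $\overline B_1$, i.e.\ the half of $\Gamma$'s vertex set that you acknowledge exists ("$\overline B=\overline B_1\oplus\overline B_2$") but never use. The paper's proof devotes the bulk of its effort to exactly this point: it explicitly sets $F(B(j)):=j$ for $B(j)$ a summand of $B_1$, and conversely $G(j):=B(j)$, so that the vertex set $\{\text{summands of }B_1\}\cup\sigma_T$ is matched against the vertex set of ${}^\perp T$; it then verifies face-by-face that the resulting map respects the rigidity and support conditions. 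In your formulation, the rank identity $\rank U+|\rho|=\rank F(U)+|\rho'|$ already fails as soon as $U$ has a $B_1$ (resp.\ $C_1$) summand, since that summand contributes to $\rank U$ but neither to $\rank F(U)$ nor to $|\rho'|$.

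A small additional caveat: deducing the ${}^\perp(T,\sigma)$ isomorphism from the $T^\perp$ one via Theorem \ref{derived-equiv} is not a free step — that derived equivalence is through APR-tilts, and one would need to argue that APR-tilts preserve the cluster complex, which is not established in the paper. Your alternative, "rerun the argument with $\overline C$ in place of $\overline B$," is the correct move and is what the paper does (it proves the ${}^\perp T$ case and invokes duality for the other).
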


\begin{proof}
By restricting first to $\modcat\Lambda_\sigma$, we may assume that $\sigma=\emptyset$. It is clear that the basic rigid modules in $\mathcal E/T$ are in bijection with the basic rigid modules in
\[ \{M:\Ext^1(T,M)=0=\Ext^1(M,T)\}/\add(T). \]
In the same way that we constructed the poset $\mathcal T$ from $\mathcal E$, we may construct a poset $\widetilde{\mathcal T}$ from
\[ \widetilde{\mathcal E} := \{i\in\sigma_T\}\cup\{M\textrm{ exceptional}, T\oplus M\textrm{ basic rigid}\}/\cong. \]
Then the section $\mathcal E/T$ is isomorphic to $\widetilde{\mathcal T}$.

We want to show that the equivalence of categories
\[ \{M:\Ext^1(T,M)=0=\Ext^1(M,T),\lambda_M\textrm{ epi}\}/\add(T) \cong {}^\perp T \]
induces an isomorphism of posets $\widetilde{\mathcal T} \cong \mathcal T_{{}^\perp T}$.

We recall from Theorem \ref{min-left-inj} and Proposition \ref{B_1,B_2 Prop} that the exceptional modules $M$ such that $T\oplus M$ is basic rigid and $\lambda_M$ is a monomorphism are precisely the summands of $B_1$, the relative Bongartz completion of $T$.

Given an exceptional module $X$ in ${}^\perp T$, we have an exceptional module $G(X)$ such that $T\oplus G(X)$ is basic rigid, using the functor $G$ from Theorem \ref{perp-thm}. It remains to define $G$ on the vertices of ${}^\perp T$. We recall that ${}^\perp T$ has projective generator $\tau^-\overline B_1\oplus P_T$ and injective cogenerator $\overline C$. Thus the vertices of ${}^\perp T$ are in bijection with the summands $B(j)$ of $B_1$ together with the vertices in $\sigma_T$. We observe that $G(\overline C)=C$ and $G(P_T)=B_2$ by construction. If $i\in\sigma_T$, then we define $G(i):=i$. Otherwise, if $B(j)$ is a summand of $B_1$, then we define $G(j):=B(j)$.

Conversely, let $M$ be an exceptional module such that $T\oplus M$ is basic rigid. If $M$ is not a summand of $B_1$, then $\lambda_M$ is an epimorphism, so we have an exceptional module $F(M)\in{}^\perp T$. If $M=B(j)$ is a summand of $B_1$, we define $F(B(j)):=j$, the corresponding vertex of ${}^\perp T$. Finally, if $i\in\sigma_T$, we define $F(i):=i$, the vertex in ${}^\perp T$.

It is now clear that $F$ and $G$ are inverse bijections between the vertices of $\widetilde{\mathcal T}$ and of $\mathcal T_{{}^\perp T}$. It remains to show that they induce bijections on the whole posets.

Let $(X,\sigma)$ be a face of $\mathcal T_{{}^\perp T}$. We can write $\sigma=\sigma'\cup\sigma''$, where $\sigma''=\sigma\cap\sigma_T$. Set $B':=G(\sigma')$, a summand of $B_1$, and $P'':=\bigoplus_{i\in\sigma''}P(i)$, a summand of $P_T$. Then, since $(X,\sigma)$ is a face of $\mathcal T_{{}^\perp T}$,
\[ \Hom(P'',X)=0 \quad\textrm{and}\quad \Hom(\tau^-\overline B',X)=0. \]
Using the universal coextension $\eta_X$, we deduce that
\[ \Hom(P'',G(X))=0 \quad\textrm{and}\quad \Ext^1(G(X),\overline B')\cong D\Hom(\tau^-\overline B',G(X))=0. \]
Finally, using $\rho_{B'}$, we see that $\Ext^1(G(X),B')=0$.

We know that, since $T\oplus G(X)$ is rigid, $\Ext^1(B',G(X))=0$. We have therefore shown that $G(X,\sigma)=(G(X)\oplus B',\sigma'')$ is a face of $\widetilde{\mathcal T}$.

The proof that $F$ sends faces to faces is entirely analogous. Since $F$ and $G$ clearly preserve the partial order, we are done.
\end{proof}

By Lemma \ref{simplex}, we know that $\mathcal T'$ is a pure simplicial complex of rank $n-1$, and hence that every flag in $\mathcal T'$ has length $n$. Therefore every flag in $\mathcal T$ has length $n+1$, so that AP2 holds. Moreover, AP3 and AP4 hold for all sections in $\mathcal T'$, since all such sections are simplices. On the other hand, using Theorem \ref{co-face} and induction on rank, we see that AP3 and AP4 hold for all proper sections. We are left with proving that the poset $\mathcal T$ is itself flag connected, as well as that AP3 and AP4 hold whenever $\rank\Lambda=1$.

We recall the classification of rigid modules over hereditary artin algebras of rank 1 or 2.

Suppose first that $\Lambda$ has rank 1, so that $\Lambda_\Lambda$ is a simple module. Thus $\Lambda$ is a division algebra, whence $\mathcal E=\{\Lambda,1\}$ and $\mathcal T_\Lambda=\{\emptyset,1,\Lambda,\mathcal E\}$. Therefore $\mathcal T_\Lambda$ is a polytope, in fact a 1-simplex (line segment). In particular, AP3 and AP4 are satisfied.

Now suppose that $\Lambda$ has rank 2. Then $\Lambda_\Lambda\cong P(1)\oplus P(2)$ with $P(2)$ simple and $\Hom(P(1),P(2))=0$. Thus
\[ \Lambda \cong \End(\Lambda) \cong \begin{pmatrix} \End(P(1)) & \Hom(P(2),P(1))\\0 & \End(P(2))\end{pmatrix} = \begin{pmatrix} U&M\\0&V\end{pmatrix}, \]
where $U$ and $V$ are division algebras and $M$ is an $U$-$V$-bimodule. Set
\[ u:=\ell_R(U), \quad v:=\ell_R(V), \quad m:=\ell_R(M), \quad r:=\dim_UM, \quad s:=\dim_VM. \]
Then $ru=m=sv$, and the Euler form on $K_0$ is given by the matrix $\left(\begin{smallmatrix}u&-m\\0&v\end{smallmatrix}\right)$ with respect to the standard basis. In fact, since $\mathrm{rad}P(1)\cong M$, we have that $\Ext^1(S(1),S(2))\cong\Hom_V(M,V)$ as $V$-$U$-bimodules.

The symmetrisation $\left(\begin{smallmatrix}2u&-m\\-m&2v\end{smallmatrix}\right)$ is a symmetrisable generalised Cartan matrix
\[ \begin{pmatrix}2&-r\\-s&2\end{pmatrix} = \begin{pmatrix}u&0\\0&v\end{pmatrix}^{-1}\begin{pmatrix}2u&-m\\-m&2v\end{pmatrix}, \]
so has an associated root system. The APR-tilting functors, when composed with the standard duality, give endofunctors of $\modcat\Lambda$ whose action on dimension vectors corresponds to the simple reflections on the root system. In this way, we can apply Kac's arguments \cite{Kac1,Kac2} to show that the dimension vector of an indecomposable $\Lambda$-module is necessarily a root and that the set of dimension vectors of exceptional modules is precisely the set of positive real roots. (The usual difficulties about admissibility of reflections does not arise, since each vertex is either a sink or a source.)

We deduce that the only exceptional modules are the preprojective modules $\tau^{-t}P(i)$ and the preinjective modules $\tau^tI(i)$, for $i=1,2$ and $t\geq0$. Finally, we see that the tilting modules are given by pairs of adjacent modules in the preprojective or preinjective components of the Auslander-Reiten quiver. More explicitly, they are given by
\[ \tau^{-t}(P(2)\oplus P(1)), \quad \tau^{-t}(P(1)\oplus\tau^-P(2)), \quad \tau^t(I(2)\oplus I(1)), \quad \tau^t(\tau I(1)\oplus I(2)). \]
For $\tau^{-t}P(1)$ we note that its Bongartz completion is $\tau^{-t}P(2)$ and its dual Bongartz completion is $\tau^{-t-1}P(2)$. Similarly for the other exceptional modules.

We have the following dichotomy. If $rs\geq4$, then these indecomposables are all non-isomorphic, so that the poset $\mathcal T$ is isomorphic to $\mathbb Z$ with the usual linear order. In particular, $\mathcal T$ is an abstract simplicial polytope. On the other hand, if $rs\leq 3$, then $\mathcal T$ has only finitely many vertices. More precisely, $\mathcal T$ is isomorphic to an octagon if $rs=3$, a hexagon if $rs=2$, a pentagon if $rs=1$ and a square if $rs=0$. This proves that $\mathcal T$ is always a simplicial polytope when $\Lambda$ has rank 2.

We shall need the following observation concerning the preprojective and preinjective components when $\Lambda$ has rank 2 and $rs\geq4$.

\begin{Thm}\label{total-order}
Let $\Lambda$ be a representation-infinite basic hereditary artin algebra of rank 2. Let $d$ be a positive additive function on $\modcat\Lambda$, for example $d=\ell_R$. Then the ordering of the indecomposable preprojective modules given by
\[ P\leq P' \quad\textrm{if and only if}\quad \frac{d(P)}{\sqrt{\ell_R\End(P)}}\leq\frac{d(P')}{\sqrt{\ell_R\End(P')}} \]
coincides with the natural ordering coming from the Auslander-Reiten quiver. Dually for the preinjective component.
\end{Thm}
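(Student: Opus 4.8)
The plan is to make the statement completely explicit, using the description of the preprojective component recalled just above. List the indecomposable preprojectives in Auslander--Reiten order as $P_0,P_1,P_2,\dots$, so that $P_0=P(2)$ is the simple projective, $P_1=P(1)$, and $P_{t+2}=\tau^- P_t$; all of these are non-injective, so $\tau^-$ is genuinely defined on them, and $\dimv P(1)=\epsilon_1+s\epsilon_2$. The mesh relations give a recurrence
\[
\dimv P_{t+1}=c_t\,\dimv P_t-\dimv P_{t-1}\qquad(t\ge1),
\]
where $c_t$ is the multiplicity of the relevant irreducible map; using the explicit Coxeter transformation (or, equivalently, the requirement that $\dimv P_{t+1}$ be a positive real root with $\ell_R\End$ equal to $u$ or $v$) one finds $c_t=r$ for $t$ odd and $c_t=s$ for $t$ even. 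Since $d$ is additive it factors through $K_0(\Lambda)$, so $d_t:=d(\dimv P_t)$ obeys the same recurrence $d_{t+1}=c_td_t-d_{t-1}$.

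Next I would bring in the endomorphism rings. As $P_t$ is exceptional, $\ell_R\End(P_t)=\langle\dimv P_t,\dimv P_t\rangle=q(\dimv P_t)$, where $q(a\epsilon_1+b\epsilon_2)=ua^2-mab+vb^2$ is the Euler quadratic form. The Coxeter transformation $\theta$ is an isometry of the Euler form, and $\dimv P_{t+2}=\theta^{-1}(\dimv P_t)$, so $q_t:=\ell_R\End(P_t)$ is $2$-periodic: $q_t=\ell_R\End(P(2))=v$ for $t$ even and $q_t=\ell_R\End(P(1))=u$ for $t$ odd. Put
\[
g_t:=\frac{d(P_t)}{\sqrt{\ell_R\End(P_t)}}=\frac{d_t}{\sqrt{q_t}}.
\]
Dividing the recurrence for $d_t$ by $\sqrt{q_{t+1}}=\sqrt{q_{t-1}}$ gives $g_{t+1}=c_t\sqrt{q_t/q_{t-1}}\,g_t-g_{t-1}$; the key observation is that, by $m=ru=sv$, the coefficient collapses to a constant: for $t$ odd $c_t\sqrt{q_t/q_{t-1}}=r\sqrt{u/v}=\sqrt{r(ru)/v}=\sqrt{rm/v}=\sqrt{rs}$, and for $t$ even $s\sqrt{v/u}=\sqrt{s(sv)/u}=\sqrt{sm/u}=\sqrt{rs}$. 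Hence
\[
g_{t+1}=\sqrt{rs}\;g_t-g_{t-1}\qquad(t\ge1),
\]
a linear recurrence with \emph{constant} coefficient $\sqrt{rs}\ge2$ (since $\Lambda$ is representation-infinite), and all dependence on $d$ has evaporated.

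Then I would conclude by an elementary analysis of this recurrence. One has $g_t>0$ for every $t$ (as $d$ is positive and $\End(P_t)\ne0$), and $g_1>g_0$: indeed $g_1-g_0=d(\epsilon_1)/\sqrt u+\bigl(s/\sqrt u-1/\sqrt v\bigr)d(\epsilon_2)$, and $s/\sqrt u>1/\sqrt v$ because $s^2v=s(sv)=s(ru)=rsu\ge4u>u$, so both summands are positive. If $rs=4$ then $g_{t+1}-g_t=g_1-g_0>0$ is constant and $(g_t)$ is strictly increasing. If $rs\ge5$, write $g_t=A\mu^t+B\mu^{-t}$ with $\mu=\tfrac12(\sqrt{rs}+\sqrt{rs-4})>1$; positivity of $g_t$ for all $t\ge0$ forces $A>0$, and then $g_{t+1}-g_t=(\mu-1)(A\mu^t-B\mu^{-t-1})$, which is positive for every $t\ge0$ --- trivially if $B\le0$, and otherwise because it is increasing in $t$ and already positive at $t=0$, where it equals $g_1-g_0$. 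In all cases $(g_t)_{t\ge0}$ is strictly increasing, so $P\mapsto d(P)/\sqrt{\ell_R\End(P)}$ is strictly monotone along the Auslander--Reiten ordering of the preprojectives and the two total orders coincide. The preinjective statement follows by applying the result to $\Lambda^{\mathrm{op}}$ through the duality $D$.

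I expect the main obstacle to be the bookkeeping in the first two steps rather than anything conceptual: nailing down the multiplicities $c_t$ in the mesh relations --- equivalently, the exact values of $\dimv\tau^{-k}P(i)$ and $\ell_R\End(\tau^{-k}P(i))$ --- and then threading the identities $m=ru=sv$ and $q_t\in\{u,v\}$ so that the coefficient of the $g$-recurrence collapses to $\sqrt{rs}$. Once that identity is in place, strict monotonicity is a two-line consequence, and, crucially, it holds for every positive additive function $d$, not just for $\ell_R$.
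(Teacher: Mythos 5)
Your proof is correct, and it takes a genuinely different route from the paper's. Both arguments rest on the same fundamental identity $ru=m=sv$ to make the normalisation by $\sqrt{\ell_R\End}$ collapse the multiplicities, and both need $rs\ge4$, but they exploit this differently. The paper keeps the two-variable recurrence $d(1,t)=sd(2,t)-d(1,t-1)$, $d(2,t+1)=rd(1,t)-d(2,t)$ and proves the interleaving inequality $d(2,t)\sqrt s<d(1,t)\sqrt r<d(2,t+1)\sqrt s$ by a one-step induction: it adds the two AR relations to get $d(2,t)+d(2,t+1)=rd(1,t)$, and if the right-hand inequality failed at step $t+1$ one would get $2rd(1,t)>(d(2,t)+d(2,t+1))\sqrt{rs}\ge 2(d(2,t)+d(2,t+1))=2rd(1,t)$, a contradiction. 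Your argument instead folds both sequences into the single normalised sequence $g_t=d_t/\sqrt{q_t}$, notices that the mixed recurrence becomes the constant-coefficient recurrence $g_{t+1}=\sqrt{rs}\,g_t-g_{t-1}$, and then reads off monotonicity from the explicit solution $g_t=A\mu^t+B\mu^{-t}$ (with the degenerate case $rs=4$ handled separately). Your route makes more transparent \emph{why} the $\sqrt{\ell_R\End}$ normalisation is the right one, and gives a closed form as a bonus; the paper's avoids characteristic roots and the case split and is slightly more elementary.

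One small imprecision: you assert that positivity of $g_t$ for all $t\ge0$ already forces $A>0$, but $A=0$, $B>0$ also gives $g_t=B\mu^{-t}>0$. What actually forces $A>0$ is the inequality $g_1>g_0$ that you establish beforehand: from $g_0=A+B$ and $g_1=A\mu+B\mu^{-1}$ one gets $A=(g_1-g_0\mu^{-1})/(\mu-\mu^{-1})>0$. Since you have $g_1>g_0$ in hand this is cosmetic, not a gap, but the sentence should be rephrased. Everything else --- the identification $c_t\sqrt{q_t/q_{t-1}}=\sqrt{rs}$ in both parities, the base case $g_1-g_0>0$ via $s^2v=rsu>u$, and the monotonicity of $(\mu-1)(A\mu^t-B\mu^{-t-1})$ --- checks out.
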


\begin{proof}
We keep the earlier notation, so that the Euler form of $\Lambda$ is given by the matrix $\left(\begin{smallmatrix}u&-m\\0&v\end{smallmatrix}\right)$, and $ru=m=sv$. Since $\Lambda$ is representation-infinite, we know that $rs\geq4$. We have Auslander-Reiten sequences of the form
\[ 0\to\tau^{1-t}P(1)\to(\tau^{-t}P(2))^s\to\tau^{-t}P(1)\to 0\]
and
\[ 0\to\tau^{-t}P(2)\to(\tau^{-t}P(1))^r\to\tau^{-t-1}P(2)\to 0. \]
Set $d(i,t):=d(\tau^{-t}P(i))$. It follows that
\[ d(1,t)=sd(2,t)-d(1,t-1) \quad\textrm{and}\quad d(2,t+1)=rd(1,t)-d(2,t). \]
Also, $\End(\tau^{-t}P(1))\cong U$ and $\End(\tau^{-t}P(2))\cong V$. Therefore, we wish to prove that
\[ d(2,t)/\sqrt v < d(1,t)/\sqrt u < d(2,t+1)/\sqrt v. \]
Replacing $u$ by $m/r$ and $v$ by $m/s$, it is enough to show that
\[ d(2,t)\sqrt s < d(1,t)\sqrt r < d(2,t+1)\sqrt s. \]

We observe that there is a short exact sequence
\[ 0\to P(2)^s\to P(1)\to S(1)\to 0. \]
Thus $d(1,0)=d(S(1))+sd(2,0)$, whence
\[ d(2,0)\sqrt s \leq sd(2,0) < d(1,0) \leq d(1,0)\sqrt r. \]

Suppose now that $d(2,t)\sqrt s < d(1,t)\sqrt r$. If we also have that $d(2,t+1)\sqrt s \leq d(1,t)\sqrt r$, then
\[ 2\big(d(2,t)+d(2,t+1)\big) = 2rd(1,t) > \big(d(2,t)+d(2,t+1)\big)\sqrt{rs} \geq 2\big(d(2,t)+d(2,t+1)\big), \]
a contradiction. Thus we must have that $d(1,t)\sqrt r < d(2,t+1)\sqrt s$ as required.

Analogously we can show that $d(2,t+1)\sqrt s < d(1,t+1)\sqrt r$, and so we are done by induction.
\end{proof}

\begin{Cor}\label{rank2}
Let $\Lambda$ be a connected basic hereditary artin algebra of rank 2 and let $d$ be a positive additive function on $\modcat\Lambda$. Let $T$ be a sincere exceptional module, with Bongartz complement $B$ and dual Bongartz complement $C$. Then either
\[ \frac{d(B)}{\sqrt{\ell_R\End(B)}}<\frac{d(T)}{\sqrt{\ell_R\End(T)}} \quad\textrm{or}\quad \frac{d(C)}{\sqrt{\ell_R\End(C)}}<\frac{d(T)}{\sqrt{\ell_R\End(T)}}. \]
\end{Cor}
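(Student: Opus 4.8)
The plan is to use the explicit description of exceptional modules and of the Bongartz complements over a connected rank-$2$ hereditary artin algebra, recorded in the discussion preceding the corollary, together with Theorem~\ref{total-order}. First I would identify $B$ and $C$. Since $\Lambda$ is connected of rank $2$, the only non-sincere exceptional modules are the two simple modules, so $T$ is non-simple and lies either in the preprojective or in the preinjective Auslander--Reiten component, where it has both an immediate predecessor and an immediate successor. By Theorem~\ref{min-left-inj} applied to the almost-split sequence through $T$ (equivalently, by the computation recorded above that $\tau^{-t}P(1)$ has Bongartz complement $\tau^{-t}P(2)$ and dual Bongartz complement $\tau^{-t-1}P(2)$, and dually for the remaining exceptional modules), the Bongartz complement $B$ is the immediate predecessor of $T$ in its component and the dual Bongartz complement $C$ is its immediate successor, fitting into an almost-split sequence
\[ 0\longrightarrow B\xrightarrow{\lambda_B} T^k\longrightarrow C\longrightarrow 0. \]
Since $B=\tau C$, the modules $B$ and $C$ lie in the same $\tau$-orbit and hence have isomorphic endomorphism rings; this orbit is the one not containing $T$, so $\ell_R\End(B)=\ell_R\End(C)$ equals the length of whichever of $U,V$ is not $\cong\End(T)$.

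Now I would split into two cases. If $\Lambda$ is representation-infinite, so $rs\geq4$, then Theorem~\ref{total-order} (when $T$ is preprojective) or its dual (when $T$ is preinjective) says that the ordering of the component of $T$ by $X\mapsto d(X)/\sqrt{\ell_R\End(X)}$ agrees with the Auslander--Reiten ordering; since $B$ precedes $T$, this yields $d(B)/\sqrt{\ell_R\End(B)}<d(T)/\sqrt{\ell_R\End(T)}$, the first of the two alternatives. If $\Lambda$ is representation-finite, then $rs\in\{1,2,3\}$ and its Auslander--Reiten quiver is the single component described by the almost-split sequences recalled above, so in the sequence for $T$ we have $k=r$ if $\End(T)\cong U$ and $k=s$ if $\End(T)\cong V$. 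Additivity of $d$ gives $d(B)+d(C)=k\,d(T)$, whence $\min\big(d(B),d(C)\big)\leq\tfrac{k}{2}d(T)$. Setting $e=\ell_R\End(T)$ and $f=\ell_R\End(B)=\ell_R\End(C)$, it is enough to show $\tfrac{k}{2}\sqrt{e}<\sqrt{f}$, i.e. $k^2e<4f$; when $\End(T)\cong U$ this is $r^2u<4v$, which by $ru=sv$ is equivalent to $rs<4$, and the case $\End(T)\cong V$ is symmetric. As $rs\leq3$, the estimate holds and $\min\big(d(B),d(C)\big)\sqrt{e}<d(T)\sqrt{f}$, which is exactly the assertion that one of the two claimed inequalities holds.

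The only step with genuine content is the identification of $B$ and $C$ with the two Auslander--Reiten neighbours of $T$, resting on Theorem~\ref{min-left-inj} and the explicit form of the almost-split sequences in rank $2$. Once that is in place, the representation-infinite case is immediate from Theorem~\ref{total-order} and its dual, and the representation-finite case is the two-line estimate above, so no further obstacle arises.
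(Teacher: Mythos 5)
Your proof is correct, and for the representation-infinite case it coincides with the paper's: both invoke Theorem \ref{total-order} (and its dual for the preinjective component). For the representation-finite case $rs\le 3$, however, the paper merely asserts that ``a direct calculation proves the result,'' leaving the reader to check up to six indecomposables by hand, whereas you supply a uniform argument. You use the short exact sequence $0\to B\to T^k\to C\to 0$ coming from Corollary \ref{B-C-bijection} (and you correctly observe that in rank two this is the almost-split sequence with $T$ in the middle, with $k=r$ or $s$ according to whether $\End(T)\cong U$ or $V$). Additivity of $d$ gives $d(B)+d(C)=k\,d(T)$, hence $\min(d(B),d(C))\le\tfrac{k}{2}d(T)$, and together with $\End(B)\cong\End(C)$ (Theorem \ref{min-left-inj}, or the fact that $B=\tau C$) the claim reduces to $k^2\,\ell_R\End(T)<4\,\ell_R\End(B)$. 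Using $ru=m=sv$ this is exactly $rs<4$. This is a genuine improvement on the paper's laconic proof: it not only avoids the case analysis but makes transparent why $rs=4$ is the threshold at which the estimate fails and Theorem \ref{total-order} takes over. Your identification of $B$ and $C$ with the two Auslander--Reiten neighbours of $T$ is justified by the explicit description preceding the corollary, so there is no gap.
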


\begin{proof}
Let $r$ and $s$ have their usual meanings. Then $\Lambda$ is connected if and only if $rs\geq1$. We have shown in the previous proposition that if $rs\geq4$, then the result holds. On the other hand, if $rs\leq 3$, then $\Lambda$ is representation-finite with at most six indecomposable modules, in which case a direct calculation proves the result.
\end{proof} 

Recall that we need to prove that the poset $\mathcal T$ is flag connected. Since $\mathcal T'$ is a simplicial complex, we know that all flags containing a given facet are equivalent.  We say that two support-tilting modules $T$ and $T'$ are mutations of one another provided that there is a flag containing $T$ which is adjacent to a flag containing $T'$. It is therefore enough to prove that each support-tilting module is mutation equivalent to the trivial support-tilting module $0$.

\begin{Prop}\label{Endos}
Let $T=\bigoplus_iT(i)$ and $T'=\bigoplus_iT'(i)$ be mutation-equivalent support-tilting modules. Then
\[ \prod_i\End(T(i))\times\prod_{i\in\sigma_T}\End(S(i))\cong\prod_i\End(T'(i))\times\prod_{i\in\sigma_{T'}}\End(S(i)). \]
\end{Prop}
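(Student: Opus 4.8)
The plan is to reduce to the single mutation step and then invoke the structure of the co-face. Since mutation equivalence is generated by the adjacency relation, it suffices to treat the case where $T$ and $T'$ differ by a single mutation; that is, there are flags $F_{-1}<F_0<\cdots<F_n$ through $T$ and $F_{-1}<F_0<\cdots<\widehat F_i<\cdots<F_n$ through $T'$ differing in exactly one entry $F_i$ versus $\widehat F_i$. Because $\mathcal T'$ is a simplicial complex (Lemma \ref{simplex}), the only way two facets can be related by adjacent flags is if $F_i=F_{n-1}$ is a ridge, i.e.\ there is a common face $(U,\rho)$ of rank $n-2$ with $T,T'$ the two facets above it, and these correspond (under the bijection of Section 5) to the two support-tilting completions of $(U,\rho)$ in the rank-$2$ perpendicular category. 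So I would first isolate this local picture and then argue that the claimed product is an invariant of it.

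The key step is to compute the product $\prod_i\End(T(i))\times\prod_{i\in\sigma_T}\End(S(i))$ face-by-face along a flag. Given a flag $(0,\{1,\dots,n\})=G_{-1}<G_0<\cdots<G_{n-1}=(T,\sigma_T)$, each step $G_{k-1}<G_k$ adds either one indecomposable summand $X$ to the rigid part or one vertex $j$ to the support complement. I claim the product in question is the tensor product (over $R$) of the "local" endomorphism rings picked up at each step, and that this local ring depends only on the pair $(G_{k-1},G_k)$, hence only on the flag and not on how we pass to the top face. Concretely, when we pass from a face $(V,\tau)$ to $(V\oplus X,\tau)$ inside the relevant perpendicular category, Theorem \ref{co-face} identifies the co-face $\mathcal E/(V,\tau)$ with $\mathcal T_{(V,\tau)^\perp}$, and the endomorphism ring of the corresponding exceptional object in $(V,\tau)^\perp$ is computed by Theorem \ref{perp-thm} (the functors $F$, $G$ preserve endomorphism rings modulo $\add(T)$) to be $\End(X)$ itself; similarly for a vertex $j$ one picks up $\End(S(j))$. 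Two adjacent flags differ only at one face $F_i$, and the product of local rings over the two sub-flags $F_{i-1}<F_i<F_{i+1}$ versus $F_{i-1}<\widehat F_i<F_{i+1}$ agree because the section $F_{i+1}/F_{i-1}$ has rank $1$, so AP4 (already established for all proper sections) forces it to be the polytope of a rank-$1$ hereditary algebra, whose unique division-algebra endomorphism ring is the same regardless of which of the two faces one passes through; this is exactly the content of the identity $\End(M)\cong\End(N)$ from Theorem \ref{min-left-inj}(3) (and its variants for vertices).

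The main obstacle I anticipate is bookkeeping: making precise the claim that the product decomposes multiplicatively along a flag and that each factor is genuinely local, i.e.\ depends only on one edge of the flag. The subtlety is that the perpendicular-category passage is only an equivalence \emph{modulo} $\add(T)$, so I must be careful that "the endomorphism ring of $X$ in $(V,\tau)^\perp$" really is $\End_\Lambda(X)$ and not some proper quotient — this is where I lean on the fully faithfulness statements in the proof of Theorem \ref{perp-thm} together with the fact that for an exceptional $X$ the ring $\End(X)$ is a division algebra (Corollary \ref{mono-or-epi}), which has no proper quotients. Once that is pinned down, invariance under a single mutation follows from the rank-$1$ case, and invariance under arbitrary mutation equivalence follows by composing single steps. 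A convenient way to organise the write-up is to prove the stronger statement that the product is literally computed by any flag through $(T,\sigma_T)$ via these local factors, so that Proposition \ref{Endos} becomes the assertion that adjacent flags give the same answer, reducing everything to the already-handled rank-$1$ and rank-$2$ situations.
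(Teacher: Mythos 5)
Your proposal takes essentially the same route as the paper: reduce to a single mutation step and invoke the identity $\End(M)\cong\End(N)$ from Theorem \ref{min-left-inj}(3) for the two exceptional complements of a ridge, together with $\End(B_2)\cong\End(P_T)$ from Proposition \ref{B_1,B_2 Prop} (your ``variant for vertices'') when the mutation changes the support complement $\sigma$. The flag-decomposition and rank-one co-face framing is a more verbose packaging of the paper's two-case argument and does not change the logical content, though note that the global product does not need the perpendicular-category machinery to decompose vertex-by-vertex (it literally is such a product), and the minimal face of a flag is $\emptyset$, not $(0,\{1,\dots,n\})$.
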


\begin{proof}
It is enough to consider the case when $T'$ is a mutation of $T$. There are two cases to consider.

Suppose first that $\sigma_T=\sigma_{T'}$. By passing to $\Lambda_T$, we may assume that $T$ and $T'$ are sincere. Since we have flags which are adjacent, we can write $T=\overline T\oplus X$ and $T'=\overline T\oplus X'$ with $X$ and $X'$ exceptional. Using Theorem \ref{co-face}, we see that $\rk\mathcal E/\overline T=1$, so that $X$ and $X'$ are the only complements of $\overline T$. Therefore $\overline T$ is sincere and we may assume that $X$ is the Bongartz complement and $X'$ is the dual Bongartz complement. In particular, we can apply Theorem \ref{min-left-inj} to deduce that $\End(X)\cong\End(X')$. 

Instead, suppose that $T'=T\oplus X$ and $\sigma_T=\sigma_{T'}\cup\{i\}$. By passing to $\Lambda_{T'}$, we may assume that $T'$ is sincere and that $\sigma_T=\{i\}$. Again by Theorem \ref{co-face}, $X$ is the Bongartz complement of $T$ (and also the dual Bongartz complement) and by Proposition \ref{B_1,B_2 Prop} we see that $\End(X)\cong\End(P(i))\cong\End(S(i))$.
\end{proof}

For an exceptional module $M$ we define
\[ \mu(M):=\ell_RM/\sqrt{\ell_R\End(M)}, \]
and, for a support-tilting module $T=\bigoplus_{i=1}^rT(i)$, we set
\[ \lambda(T):=\big(0,\ldots,0,\mu(T(1)),\ldots,\mu(T(r))\big), \]
where we have $|\sigma_T|=n-r$ zeros and the $\mu(T(i))$ are assumed to be arranged in non-decreasing order. We observe that, by the proposition above, if $T'$ is equivalent to $T$, then the possible denominators which can occur in $\lambda(T')$ are fixed. Therefore, under the lexicographic ordering, the set of possible $\lambda(T')$ has no infinite descending chains.

If $T$ is insincere (i.e. $r<n$ above), then $T$ lies in the co-face $\mathcal E/(\sigma_T)$, which is isomorphic to $\mathcal T_{\Lambda_T}$. Since $\Lambda_T$ has smaller rank than $\Lambda$, the co-face is flag connected and so $T$ is mutation equivalent to $0$.

Otherwise, let $T$ be sincere and let $M=T(1)$ be an indecomposable summand of $T$ with $\mu(M)$ minimal. The co-face $\mathcal E/M$ is isomorphic to the poset $\mathcal T_{{}^\perp M}$ by Theorem \ref{co-face}. In particular, as was shown in the proof of that theorem, the zero module in ${}^\perp M$ corresponds to the relative Bongartz completion $B_1\oplus M$ of $M$. Thus, by induction on rank, $T$ is mutation equivalent to $B_1\oplus M$, and dually also to $M\oplus C_1$, where $C_1$ is the relative dual Bongartz complement.

Since $B_1$ is cogenerated by $M$, if $M$ is insincere, then $B_1\oplus M$ is insincere, whence $\lambda(B_1\oplus M)<\lambda(T)$. So, we may assume that $M$ is sincere (hence $B=B_1$ and $C=C_1$). As in Corollary \ref{B-C-bijection}, there exist short exact sequences
\[ 0 \to B(i) \to M^{r_i} \to C(i) \to 0, \]
where the $B(i)$ and $C(i)$ are indecomposable summands of $B$ and $C$ respectively. Now let $Z(i)$ be any tilting module in ${}^\perp(B(i)\oplus M)$. Then $Z(i)^\perp$ has rank 2 and $B(i)\oplus M$ and $M\oplus C(i)$ are tilting modules in $Z(i)^\perp$. Since the function $\ell_R$ on $\modcat\Lambda$ induces a positive additive function on $Z(i)^\perp$, we can apply Corollary \ref{rank2} to deduce that either $\mu(B(i))$ or $\mu(C(i))$ is smaller than $\mu(M)$. In this way, we have shown that $T$ is mutation equivalent to a support-tilting module $T'$ such that $\lambda(T')<\lambda(T)$. Since there are no such infinite decreasing sequences, we deduce that every support-tilting module is mutation equivalent to $0$.

This completes the proof that $\mathcal T$ satisfies AP3, and hence is an abstract simplicial polytope.

\begin{Cor}\label{Cor-Endos}
Let $T=\bigoplus_iT(i)$ be a basic support-tilting module. Then
\[ \prod_i\End(T(i))\cong\prod_{i\not\in\sigma_T}\End(S(i)). \]
\end{Cor}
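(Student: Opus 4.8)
The plan is to deduce Corollary \ref{Cor-Endos} from Proposition \ref{Endos} together with the already-established fact (AP3) that $\mathcal{T}$ is flag connected, so that every basic support-tilting module is mutation equivalent to the trivial support-tilting module $0$. Indeed, the trivial module $0$ has $\sigma_0 = \{1,\ldots,n\}$ and no nonzero summands, so the left-hand product in Proposition \ref{Endos} applied to $T' = 0$ degenerates to $\prod_{i\in\sigma_0}\End(S(i)) = \prod_i\End(S(i))$.

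First I would invoke the result just proved, namely that $\mathcal{T}$ satisfies AP3, to conclude that $T$ is mutation equivalent to $0$. Then I would apply Proposition \ref{Endos} to the mutation-equivalent pair $T$ and $0$, obtaining
\[ \prod_i\End(T(i))\times\prod_{i\in\sigma_T}\End(S(i)) \cong \prod_i\End(S(i)). \]
Next I would rewrite the right-hand side by splitting the index set according to whether $i\in\sigma_T$ or $i\notin\sigma_T$:
\[ \prod_i\End(S(i)) \cong \prod_{i\notin\sigma_T}\End(S(i))\times\prod_{i\in\sigma_T}\End(S(i)). \]
Finally, one cancels the common factor $\prod_{i\in\sigma_T}\End(S(i))$ from both sides to obtain $\prod_i\End(T(i))\cong\prod_{i\notin\sigma_T}\End(S(i))$, which is the claim (noting that $i\notin\sigma_T$ is the same as $i\in\operatorname{supp}T$).

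The only genuinely delicate point is whether cancellation of the common factor is legitimate. Here it is, because all the algebras involved are finite products of skew-fields (recall $\Lambda/\mathrm{rad}\Lambda\cong\prod_i\End(S(i))$ is a product of skew-fields, and the $\End(T(i))$ are skew-fields since the $T(i)$ are exceptional, by Corollary \ref{mono-or-epi}); a semisimple ring that is a product of skew-fields has a unique decomposition into simple factors up to permutation by the Wedderburn--Artin theorem, so an isomorphism $A\times C\cong B\times C$ with $C$ semisimple forces $A\cong B$. Alternatively, and more cleanly, one can simply count the factors of $\End(S(i))$ in the product $\prod_{i\in\sigma_T}\End(S(i))$ against those appearing in $\prod_i\End(S(i))$: since the factors are indexed by distinct vertices, the factors surviving on the right after removing those indexed by $\sigma_T$ are precisely those indexed by the complement, and these must match $\prod_i\End(T(i))$. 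I expect this cancellation argument to be essentially routine; the real content of the corollary is entirely carried by Proposition \ref{Endos} and the flag-connectedness established in the body of the proof of Theorem \ref{Main Thm}.
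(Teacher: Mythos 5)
Your proof is correct and follows exactly the paper's route: apply Proposition \ref{Endos} to the mutation-equivalent pair $T$ and $0$ (mutation equivalence being guaranteed by the flag connectedness just established) and cancel the common factor. The paper states this as "immediate"; you have merely spelled out the routine cancellation, and your justification of it via uniqueness of the Wedderburn decomposition for products of skew-fields is sound.
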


\begin{proof}
This is immediate from Proposition \ref{Endos}, using that $\mathcal T$ is flag connected.
\end{proof}

{\small\noindent
Andrew Hubery\\
University of Leeds\\
ahubery@maths.leeds.ac.uk}

\end{document}